\DeclareMathOperator{\trace}{trace}
\DeclareMathOperator{\argmin}{arg \ min}
\DeclareMathOperator{\rank}{rank}
\DeclareMathOperator{\diag}{diag}
\DeclareMathOperator{\Var}{Var}
\newtheorem{lemma}{Lemma}[section]
\newtheorem{corollary}{Corollary}[section]
\newtheorem{theorem}{Theorem}[section]
\newtheorem{remark}{Remark}
\newtheorem{assumption}{Assumption}
\numberwithin{equation}{section}
\theoremstyle{plain}
\begin{document}

\begin{frontmatter}
\title{Robust Low-Rank Matrix Estimation}
\runtitle{Robust Low-Rank Matrix Estimation}

\begin{aug}
\author{\fnms{Andreas} \snm{Elsener}\ead[label=e1]{elsener@stat.math.ethz.ch}}
\and
\author{\fnms{Sara} \snm{van de Geer}\ead[label=e2]{geer@stat.math.ethz.ch}}

\runauthor{Elsener and van de Geer}

\affiliation{ETH Z\"urich}

\address{
Seminar for Statistics\\
ETH Zurich\\
8092 Zurich \\
Switzerland \\
\printead{e1}\\
\printead{e2}}

\end{aug}

\begin{abstract}
Many results have been proved for various nuclear norm penalized estimators of the uniform sampling matrix completion problem. However, most of these estimators are not robust: in most of the cases the quadratic loss function and its modifications are used. We consider robust nuclear norm penalized estimators using two well-known robust loss functions: the absolute value loss and the Huber loss. Under several conditions on the sparsity of the problem (i.e. the rank of the parameter matrix) and on the regularity of the risk function sharp and non-sharp oracle inequalities for these estimators are shown to hold with high probability. As a consequence, the asymptotic behavior of the estimators is derived. Similar error bounds are obtained under the assumption of weak sparsity, i.e. the case where the matrix is assumed to be only approximately low-rank. In all our results we consider a high-dimensional setting. In this case, this means that we assume $n\leq pq$. Finally, various simulations confirm our theoretical results. 
\end{abstract}

\begin{keyword}[class=MSC]
\kwd[Primary ]{62J05}
\kwd{62F30}
\kwd[; secondary ]{62H12}
\end{keyword}

\begin{keyword}
\kwd{Matrix completion}
\kwd{robustness}
\kwd{empirical risk minimization}
\kwd{oracle inequality}
\kwd{nuclear norm}
\kwd{sparsity}
\end{keyword}

\end{frontmatter}

\section{Introduction}
\label{s:introduction}
\subsection{Background and Motivation}
Netflix, Spotify, Apple Music, Amazon and many other on-line services offer an almost infinite amount of songs or films to their users. Clearly, a single person will never be able to watch every film or to listen to every available song. For this reason, an elaborate recommendation system is necessary in order to allow the users to choose content that already match his or her preferences. Many models and estimation methods have been proposed to address this question. Matrices provide an appropriate way of modelling this problem. Imagine that the plethora of films/songs is identified with the rows of a matrix, call it $B^*$, and the users with its columns. One entry of the matrix corresponds to the rating given to film $`` i" $ (row) by user $``j"$ (column). This matrix will have many missing entries. These entries are bounded and we can expect the rows of $B^*$ to be very similar to each other. It is therefore sensible to assume that $B^*$ has a low rank. The challenge is now to predict the missing ratings/fill in the empty entries of $B^*$. Define for this purpose the set of observed (possibly noisy) entries
\begin{align}
\mathcal{A} := \left\lbrace (i,j) \in \left\lbrace 1, \dots, p \right\rbrace \times \left\lbrace 1, \dots, q \right\rbrace : \mbox{the (noisy) entry} \right. \\ \left. \ A_{ij}\ \mbox{of} \ B^* \ \mbox{is observed} \right\rbrace,\nonumber
\end{align}
where $p$ is the number of films/songs and $q$ the number of users. Our estimation problem can therefore be stated in the following way: for $B \in \mathcal{B} \subset \mathbb{R}^{p \times q}$ we 
\begin{equation*}
\mbox{minimize} \ R_n(B), \ \mbox{subject to} \ \mbox{rank}(B) = s.
\end{equation*}
In this special case we have that 
\begin{equation}
\label{eqn:setb}
\mathcal{B} = \left\lbrace B \in \mathbb{R}^{p \times q} \vert \Vert B \Vert_\infty \leq \eta \right\rbrace,
\end{equation}
where $\eta$ is for instance the mean highest rating, and $R_n$ is some convex empirical error measure that is defined by the data, e.g.
\begin{equation}
R_n (B) = \frac{1}{\vert \mathcal{A} \vert}\sum_{(i,j) \in \mathcal{A}} (A_{ij} - B_{i,j})^2.
\end{equation}
Since the rank of a matrix is not convex we use the nuclear norm as its convex surrogate. This leads us to a relaxed convex optimization problem. For $B \in \mathcal{B}$ we
\begin{equation*}
\mbox{minimize} \ R_n(B), \ \mbox{subject to} \ \Vert B \Vert_{\mbox{nuclear}} \leq \tau,
\end{equation*}
for some $\tau > 0$.
The model described above can be considered as a special case of the trace regression model.

In the \textit{trace regression model} (see e.g. \cite{rohde2011estimation}) one considers the observations $(X_i, Y_i)$ satisfying
\begin{equation}
\label{eqn:traceregr}
Y_i = \trace (X_i B^*) + \varepsilon_i, \ i = 1, \dots, n,
\end{equation}
where $\varepsilon_i$ are i.i.d. random errors.
The matrices $X_i$ are so-called masks. They are assumed to lie in
\begin{equation}
\label{eqn:space}
\mathcal{\chi} = \left\lbrace e_k(q) e_l(p)^T : 1 \leq k \leq q, 1 \leq l \leq p  \right\rbrace,
\end{equation}
where $e_k(q)$ is the $q$-dimensional $k$-th unit vector and $e_l(p)$ is the 
$p$-dimensional $l$-th unit vector. We will assume that the $X_i$ are i.i.d. with
\begin{equation*}
\mathbb{P} \left(X_{i_{kj}} = 1 \right) = 1- \mathbb{P} \left(X_{i_{kj}} = 0 \right) = \frac{1}{pq}
\end{equation*} for all $i \in \left\lbrace 1, \dots, n \right\rbrace$, $k \in \left\lbrace 1, \dots q \right\rbrace$, and $j \in \left\lbrace 1, \dots, p \right\rbrace$. However, we point out that it is not necessary for our estimators to know this distribution. This knowledge will only be used in the proofs of the theoretical results.

The trace regression model together with the space $\chi$ and the distribution on $\chi$ is equivalent to the matrix completion case. The entries of the vector $Y$ can be identified with the observed entries as those in the matrix $A$.

From this, it can be seen that we are in a high-dimensional setting since the number of observations $n$ must be smaller than or equal to the total number of entries of $A$.
The setup described above is then called \textit{uniform sampling matrix completion}. A very similar setup was first considered in  \cite{srebro2004maximum} and \cite{srebro2005rank}.

As in the standard regression setting, parameter estimation in the trace regression model is also done via empirical risk minimization. Using the Lagrangian form for $B \in \mathcal{B}$ we 
\begin{equation*}
\mbox{minimize} \ R_n(B) + \lambda \Vert B \Vert_{\mbox{nuclear}},
\end{equation*}
where $R_n (B) = 1/n \sum_{i=1}^n \rho(Y_i - \trace( X_i B))$, $\rho$ is a convex loss function and $\lambda > 0$ is the tuning parameter. The loss function is often chosen to be the quadratic loss (or one of its modifications) as in \cite{Koltchinskii2011, negahban2011estimation, negahban2012restricted, rohde2011estimation} and many others. In \cite{lafond2015low} the case of an error distribution belonging to an exponential family is considered. As long as the errors are assumed to be light tailed as it is the case for i.i.d. Gaussian errors the least squares estimator will perform very well. However, the ratings are heavily subject to frauds (e.g. by the producer of a film). It is necessary to take this fact into account also in the estimation procedure. One might also be interested in estimating the median or another quantile of the ratings. For this purpose, M-estimators based on different losses than the quadratic loss are usually chosen.

\subsection{Proposed estimators}

In this paper, we consider the absolute value loss and the Huber loss.
The first robust estimator is then given by
\begin{equation}
\label{eqn:robust}
\hat{B} := \underset{B \in \mathcal{B}}{\argmin} \ \frac{1}{n} \sum_{i=1}^n \left\vert Y_i - \trace\left(X_i B\right) \right\vert + \lambda \left\Vert B \right\Vert_{\mbox{nuclear}}.
\end{equation}
Using the Huber loss we can define
\begin{equation}
\label{eqn:huber}
\hat{B}_H := \underset{B \in \mathcal{B} }{\argmin} \frac{1}{n} \sum_{i=1}^n \rho_H \left( Y_i - \trace \left( X_i B \right) \right) + \lambda \left\Vert B\right\Vert_{\mbox{nuclear}},
\end{equation}
where the function
\begin{equation*}
\rho_H(x) := \left\lbrace \begin{array}{ll}
x^2, & \mbox{if} \ \left\vert x \right\vert \leq \kappa \\
2 \kappa \left\vert x \right\vert - \kappa^2, & \mbox{if} \  \left\vert x \right\vert > \kappa
\end{array} \right.
\end{equation*}
defines the Huber loss function. The tuning parameter $\kappa > 0 $ is assumed to be given for our estimation problem. The possible values for the Huber parameter  $\kappa$  depend on the distribution of the errors as shown in Lemma \ref{lemma:2margin}. In practice, one usually estimates $\kappa$ and $\lambda$ with methods such as cross-validation.
Notice that it could happen that the estimators defined in Equations \ref{eqn:robust} and \ref{eqn:huber} are not unique since the objective functions are not strictly convex.
As will be shown, the rates depend on the Lipschitz constants of the loss functions and on $\eta$. Typically, the Lipschitz constants of the absolute value loss as well as of the Huber loss induce smaller constants in the rates compared to the Lipschitz constant of the truncated quadratic loss.

The ``target'' is defined as 
\begin{equation*}
B^0 := \underset{B \in \mathcal{B}}{\argmin} \ R(B),
\end{equation*}
where $R(B) = \mathbb{E} R_n (B)$ is the theoretical risk. It has to be noticed that the matrix $B^0$ is not necessarily equal to the matrix $B^*$.
Our main interest lies in the theoretical analysis of the above estimators. The estimators should mimic the sparsity behavior of an oracle $B$. In the case of the absolute value loss we will prove a non-sharp oracle inequality. Whereas for the Huber loss, thanks to its differentiability, we are able to derive a sharp oracle inequality.

Assuming $B=B^0$ in Corollary \ref{cor:usrcompletionhuber} the upper bound is typically of the form
\begin{equation}
R(\hat{B}_H) - R(B^0) \lesssim \lambda^2 pqs_0 ,
\end{equation}
where $ \lesssim $ means that some multiplicative constants (depending on the tuning parameter $\kappa$) are omitted.

The assumptions for this kind of results are mainly based on the regularity of the absolute value and the Huber loss. Moreover, the properties of the nuclear norm, which are very similar to those of the $\ell_1$-norm for vectors, will be exploited.  In addition, we use the sparsity behavior induced by the nuclear norm to infer the behavior of weakly sparse estimators. This takes into account that a matrix could have few very large singular values and many small, but not exactly zero singular values:
\begin{equation*}
\ B \in \left\lbrace B' \in \mathcal{B} : \sum_{j =1}^q \Lambda_j^r \leq \rho_r^r, \Lambda_1, \dots, \Lambda_q \ \mbox{the singular values of}  \ B'	\right\rbrace,
\end{equation*}
where $0 < r <1$ and $\rho_r^r$ is some reasonably small constant.

\subsection{Related Literature}

A first study with robust matrix estimation was made in \cite{chandrasekaran2011rank} in a setting with no missing entries. In order to avoid identifiability issues the authors introduce ``incoherence'' conditions on the low-rank component. These conditions make sure that the low-rank component itself is not too sparse. The locations of the corruptions are assumed to be fixed. In the context of Principal Component Analysis (PCA) which is a special case of the matrix regression model robustness was investigated in \cite{candes2011robust}. The authors assume that the matrix to be estimated is decomposed in a low-rank matrix and a sparse matrix. In contrast to \cite{chandrasekaran2011rank} the non-zero entries of the sparse matrix are assumed to be drawn randomly following a uniform distribution. Following this line of research \cite{li2013compressed} apply these conditions to the matrix completion problem with randomly observed entries.  In a parallel work \cite{chen2013low} consider the case where the indices of the observed entries may be simultaneously both random and deterministic. In these papers only noiseless robust matrix completion is considered.

\cite{cambier2016robust} study computational aspects of robust matrix completion (in the previously mentioned setting). A method relying on Riemannian optimization is proposed. The authors assume the rank of the matrix to be estimated to be known.

In \cite{foygel2011learning} weighted nuclear norm penalized estimators with (possibly nonconvex) Lipschitz continuous loss functions are studied from a learning point of view. The partially observed entries are assumed to follow a possibly non-uniform distribution on the set $\chi$. In contrast, our derivations rely among other properties on the convexity of the risk (i.e. the margin conditions).

Noisy robust matrix completion was first investigated in \cite{klopp2014robust}. The authors assume that the truth $A^*$ is decomposed in a low-rank matrix and a sparse matrix where the low-rank matrix contains the ``parameters of interest'' and the sparse matrix contains the corruptions. In addition, every observation is corrupted by independent and centered subgaussian noise. The largest entries of both the low-rank and sparse matrices are assumed to be bounded (e.g. by the maximal possible rating). Their model is as follows: $(X_i,\tilde{Y}_i), i = 1, \dots N$ satisfy
\begin{equation}
\label{eqn:kloppmodel}
\tilde{Y}_i = \trace( X_i A^*) + \xi_i, i = 1, \dots, N,
\end{equation}
where $A^* = L^* + S^*$ with $L^*$ a low-rank matrix and $S^*$ a matrix with entrywise sparsity. Columnwise sparsity is also considered but in view of a comparison of this and our approach we prefer to restrict to entrywise sparsity. The masks $X_i$ are assumed to lie in the set $\chi$ (\ref{eqn:space}) and to be independent of the noise $\xi_i$ for all $i$. The set of observed indices is assumed to be the union of two disjoint components $\Xi$ and $\tilde{\Xi}$. The set $\Xi$ corresponds to the non-corrupted noisy observations (i.e. only entries of $L^*$ plus $\xi_i$). The entries corresponding to these observations of $S^*$ are zero. The set $\tilde{\Xi}$ contains the indices of the observations that are corrupted by a (nonzero) entry of $S^*$. It is not known if an observation comes from $\Xi$ or $\tilde{\Xi}$. The estimator given in \cite{klopp2014robust} is
\begin{small}
\begin{equation}
\label{eqn:kloppest}
(\hat{L}, \hat{S}) \in \underset{\substack{\Vert L \Vert_\infty \leq \eta \\  \Vert S \Vert_\infty \leq \eta}}{\arg \min} \left\lbrace \frac{1}{N} \sum_{i=1}^N (\tilde{Y}_i - \trace(X_i (L+S)) )^2 + \lambda_1 \Vert L \Vert_{\text{nuclear}} + \lambda_2 \Vert S \Vert_1 \right\rbrace.
\end{equation}
\end{small}

In contrast to the previously mentioned papers on robust matrix completion, we consider (possibly heavy-tailed) random errors that affect the observations but not the truth.

\subsection{Organization of the paper}

The paper is organized in the following way. We state the main assumptions that are used throughout  the paper in Section \ref{s:nuclear}. Then, the nuclear norm, its properties, and its similarities to the $\ell_1$-norm are discussed. To bound the empirical process part resulting from the matrix completion setting we make use of the results in Section \ref{s:empiric} of the Supplementary Material (Appendix \ref{s:supplement}). In Section \ref{s:oracle} the main theorems are presented: the (deterministic) sharp and non-sharp oracle inequalities. In Section \ref{ss:lowrankoracle} we present the applications of these results to the case of Huber loss and absolute value loss. The asymptotics and the applications to weak sparsity are presented in Section \ref{s:asymptotics}. Finally, to verify the theoretical findings, Section \ref{s:simulations} presents some simulations. The Student t distribution with three degrees of freedom is considered as an error distribution. \smallskip

\section{Preliminaries}
\label{s:nuclear}
In this section the assumptions on the loss functions, the risk, and the distribution of the errors are presented. In particular, Assumptions 1-3 below are on the curvature of the (theoretical) risk. They are used to derive the \emph{deterministic} sharp and non-sharp oracle inequalities. It is important to notice that the curvature of the risk mainly depends on the properties of the distribution of the errors. Assumptions 4 and 5 below will be shown to be sufficient for Assumptions 2 and 3 to hold, respectively.

Furthermore, we also discuss the properties of the nuclear norm. Thanks to the penalization term in the objective functions the optimization problems become computationally tractable. We also highlight the commonalities of the vector $\ell_1$-norm and the nuclear norm for matrices.

\subsection{Assumptions on the risk and the distribution of the errors}
The first assumption is about the loss function.

\begin{assumption}
\label{ass:lipschitz}
Let $\rho$ be the loss function. We assume that it is Lipschitz continuous with constant $L$, i.e.  that for all $x,y \in \mathbb{R}$
\begin{equation}
\vert \rho(x)- \rho(y) \vert \leq L \vert x-y \vert.
\end{equation} 
\end{assumption}
The next two assumptions ensure the identifiability of the parameters by requiring a sufficient convexity of the risk around the target. 
\begin{assumption}
\label{ass:1margin}
One-point-margin condition. There is an increasing \linebreak strictly convex function $G$ with $G(0) =0$ such that for all $B \in \mathcal{B}$
\begin{equation*}
R\left(B\right) - R\left(B^0\right) \geq G\left( \Vert B-B^0 \Vert_F \right),
\end{equation*}
where $R$ is the theoretical risk function.
\end{assumption}
\begin{assumption}
\label{ass:2margin}
Two-point-margin condition. There is an increasing \linebreak strictly convex function $G$ with $G(0)=0$ such that for all $B, B' \in\mathcal{B}$ we have
\begin{equation*}
R\left(B \right) - R \left(B' \right) \geq \trace \left( \dot{R} \left(B' \right)^T \left(B-B' \right) \right) + G\left( \Vert B - B' \Vert_F \right),
\end{equation*}
where $R$ is the theoretical risk function and $[\dot{R}(B') ]_{kl} = \frac{\partial}{\partial B_{kl}} R(B) \vert_{B = B'}$.
\end{assumption}

Assumption \ref{ass:lipschitz} is crucial when it comes to the application of the Contraction Theorem which in turn allows us to apply the dual norm inequality to find a bound for the random part of the oracle bounds. Assumptions \ref{ass:1margin} and \ref{ass:2margin} are essential in the proofs of the (deterministic) results. In particular, in addition to the differentiability of the empirical risk $R_n$, Assumption \ref{ass:2margin} is responsible for the sharpness of the first oracle bound that will be proved. The margin conditions are strongly related to the shape of the distribution function and the corresponding density of the errors.

For the specific application to the Huber loss and absolute value loss estimators we show that mild conditions on the distribution of the errors ensure a sufficient curvature of the risk for both loss functions under study.

Assumption \ref{ass:2margin} holds under a weak condition on the distribution function of the errors:
\begin{assumption}
\label{ass:ass4}
Assume that there exists a constant $C_1>0$ such that the distribution function $F$ with density with respect to Lebesgue measure $f$ of the errors fulfills
\begin{equation}
\label{ass:distn}
F(u + \kappa) - F(u - \kappa) \geq 1/ C_1^2, \ \mbox{for all} \ \vert u \vert \leq 2 \eta \ \mbox{and} \ \kappa \leq 2 \eta.
\end{equation}
\end{assumption}

\begin{lemma}\label{lemma:2margin}
Assumption \ref{ass:ass4} implies Assumption \ref{ass:2margin} with $G(u) = u^2/(2C_1^2 pq)$.
\end{lemma}

The following assumption guarantees that Assumption \ref{ass:1margin} holds.
\begin{assumption}
\label{ass:ass5}
Suppose $\varepsilon_1, \dots, \varepsilon_n$ are i.i.d. with median zero and density $f$ with respect to Lebesgue measure. Assume that for $C_2 > 0$
\begin{equation}
f \left(u \right) \geq \frac{1}{C_2^2}, \ \mbox{for all} \ \left\vert u \right\vert \leq 2 \eta.
\end{equation}
\end{assumption}

\begin{lemma} \label{lemma:1margin}
Assumption \ref{ass:ass5} implies Assumption \ref{ass:1margin} with  $G(u) = u^2/(2C_2^2pq)$.
\end{lemma}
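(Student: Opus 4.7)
\textbf{Proof proposal for Lemma \ref{lemma:1margin}.}

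Since the errors $\varepsilon_i$ are symmetric around zero, $B^0 = B^*$, and with the absolute value loss we have
\[
R(B) - R(B^0) = \mathbb{E}\bigl[\,|\varepsilon + \trace(X(B^0 - B))| - |\varepsilon|\,\bigr],
\]
where the expectation is over the independent pair $(X, \varepsilon)$. My plan is to analyze the one-dimensional function $g(z) := \mathbb{E}[\,|\varepsilon + z| - |\varepsilon|\,]$, show $g(z) \geq z^2/C_2^2$ on the relevant range, and then average over $X$.

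First, I would use the identity $|\varepsilon + z| - |\varepsilon| = \int_0^z \text{sign}(\varepsilon + t)\,dt$ to write $g(z) = \int_0^z \bigl[1 - 2F(-t)\bigr]\,dt$. The symmetry of the errors gives $1 - 2F(-t) = 2F(t) - 1$, so in particular $g'(0) = 0$ (by the median-zero condition), $g(0) = 0$, and $g''(t) = 2f(t)$. Second-order Taylor expansion with integral remainder yields, for $z \geq 0$,
\[
g(z) = \int_0^z 2 f(s)\,(z - s)\,ds,
\]
and for $z \leq 0$ the same identity (possibly read as $\int_z^0 2f(s)(s-z)ds$) applies; alternatively, $g$ is even because the distribution of $\varepsilon$ is symmetric.

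Next, since $B, B^* \in \mathcal{B}$ both have sup-norm bounded by $\eta$, and since $X \in \chi$ is a standard mask, $\trace(X(B^0 - B))$ equals a single entry of $B^0 - B$ and hence has absolute value at most $2\eta$. For $|z| \leq 2\eta$, Assumption \ref{ass:ass5} gives $f(s) \geq 1/C_2^2$ on the interval of integration, so
\[
g(z) \geq \frac{2}{C_2^2} \int_0^{|z|} (|z| - s)\,ds = \frac{z^2}{C_2^2}.
\]

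Finally, since $X$ puts mass $1/(pq)$ on each elementary mask $e_k(q)e_l(p)^T$, averaging over $X$ gives
\[
R(B) - R(B^0) = \mathbb{E}_X\bigl[g(\trace(X(B^0 - B)))\bigr] \geq \frac{1}{C_2^2 pq} \sum_{k,l}(B^0 - B)_{lk}^2 = \frac{\|B - B^0\|_F^2}{C_2^2 pq},
\]
which is stronger than $G(\|B - B^0\|_F) = \|B - B^0\|_F^2/(2 C_2^2 pq)$, establishing Assumption \ref{ass:1margin}. The only nonroutine step is the Taylor expansion around $z = 0$, which relies crucially on the median-zero assumption to kill the first-order term; the boundedness $|z| \leq 2\eta$ is what lets us invoke the density lower bound from Assumption \ref{ass:ass5} uniformly on the range of integration.
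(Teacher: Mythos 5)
Your proof is correct and follows essentially the same route as the paper's: both reduce to the one-dimensional entrywise risk $z \mapsto \mathbb{E}\left[\,|\varepsilon+z|-|\varepsilon|\,\right]$, kill the first-order term using the median-zero condition, lower-bound the second-order term via the density bound on $[-2\eta,2\eta]$, and then average over the uniform mask to pick up the $1/(pq)$ factor. The only cosmetic difference is that you use the sign-function identity and the integral form of the Taylor remainder, whereas the paper computes the conditional risk explicitly and invokes a mean-value (intermediate-point) remainder; your variant is, if anything, slightly more careful about exactly where the density is evaluated.
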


Another important fact is that when the distribution of the errors is assumed to be symmetric around zero $B^* = B^0$. This phenomenon is discussed in Section 4 of the Supplement.

\subsection{Properties of the nuclear norm}
The regularization by the nuclear norm plays a similar role as the $\ell_1$-norm in the Lasso \citep{tibshirani1996regression}. We illustrate the similarities and differences of these types of regularizations. In view of the oracle inequalities and in order to keep the notation as simple as possible we discuss the properties of the nuclear norm of the oracle. The oracle is typically a value $B$ that takes an up-to-constants optimal trade-off between approximation error and estimation error. In what follows, $B$ is called ``the oracle'' although its choice is flexible.

Consider the singular value decomposition of the oracle $B$ with rank $s^\star$ given by
\begin{equation}
\label{eqn:oracle}
B = P \Lambda Q^T,
\end{equation}
where $P$ is a $p \times q$ matrix, $Q$ a $q \times p$ matrix and $\Lambda$ a $q \times q$ diagonal matrix containing the ordered singular values $\Lambda_1  \geq \dots \geq \Lambda_q $.
Then the nuclear norm is given by
\begin{equation}
\left\Vert B \right\Vert_{\mbox{nuclear}} = \sum_{i = 1}^q \Lambda_i (B)  = \left\Vert \Lambda(B) \right\Vert_1,
\end{equation}
by interpreting $\Lambda(B) \in \mathbb{R}^q$ as the vector of singular values. The penalization with the nuclear norm induces sparsity in the singular values, whereas the penalization with the vector $\ell_1$-norm of the parameters in linear regression induces sparsity directly in the parameters. On the other hand, the rank plays the role of the number of non-zero coefficients in the Lasso setting, namely
\begin{equation}
s^\star = \left\Vert \Lambda(B) \right\Vert_0.
\end{equation}

One main ingredient of the proofs of the oracle inequalities is the so-called triangle property as introduced in \cite{van2001least}. This property was used in e.g. \cite{buhlmann2011statistics} to prove non-sharp oracle inequalities. For the $\ell_1$-norm the triangle property follows from its decomposability. For the nuclear norm the triangle property as it is used in this work depends on the features of the oracle $B$. For this reason we notice that for any positive integer $s \leq q$ the oracle can be decomposed in
\begin{equation*}
B = B^+ + B^-, \quad B^+ = \sum_{k=1}^s \Lambda_k P_k Q_k^T, \quad B^- = \sum_{k=s+1}^q \Lambda_k P_k Q_k^T.
\end{equation*}
The matrix $B^+$ is called ``active'' part of the oracle $B$, whereas the matrix $B^-$ is called the ``non-active'' part. The singular value decomposition of $B^+$ is given by
\begin{equation*}
B^+ = P^+ \Lambda Q^{+^T}.
\end{equation*}

We observe that the integer $s$ is not necessarily the rank of the oracle $B$. The choice of $s$ is free. One may choose a value that trades off the roles of the ``active'' part $B^+$ and ``non-active'' part $B^-$, see Lemma \ref{LEMMA:WEAKSPARS}. The following lemma is adapted from Lemma 7.2 and Lemma 12.5 in \cite{geer2015}.

\begin{lemma}
\label{def:triangle}
Let $B^+ \in \mathbb{R}^{p \times q}$ be the active part of the oracle $B$. Then we have for all $B' \in \mathbb{R}^{p \times q}$ with
\begin{align*}
\Omega^+_{B^+} \left(B'\right) &:= \sqrt{s} \left(\left\Vert P^+P^{+^T} B' \right\Vert_F + \left\Vert B' Q^+Q^{+^T} \right\Vert_F + \left\Vert P^+P^{+^T}B'Q^+Q^{+^T} \right\Vert_F \right) \\
\mbox{and} \\
 \Omega_{B^+}^- \left(B' \right) &:= \left\Vert \left(I - P^+P^{+^T} \right) B' \left( I - Q^+Q^{+^T} \right) \right\Vert_{ \mbox{\emph{nuclear}}}
\end{align*}
that
\begin{equation}
\label{def:triangleprop}
\left\Vert B^+ \right\Vert_{\mbox{ \emph{nuclear}}} - \left\Vert B' \right\Vert_{\mbox{\emph{nuclear}}} \leq \Omega_{B^+}^+ \left( B' - B^+ \right) - \Omega_{B^+}^- \left( B' \right).
\end{equation}
We then say that the triangle property holds at $B^+$.

In particular, since $\Omega_{B^+}^+(B^-) = 0$, we have for any $B' \in \mathbb{R}^{p \times q}$
\begin{equation}
\label{lemma:triangleprop}
\left\Vert B \right\Vert_{\mbox{\emph{nuclear}}}- \left\Vert B' \right\Vert_{\mbox{\emph{nuclear}}} \leq \Omega^+_{B^+}(B' - B) - \Omega^-_{B^+} (B'-B) + 2 \left\Vert B^- \right\Vert_{\mbox{\emph{nuclear}}}.
\end{equation}
Moreover, we have
\begin{equation}
\label{eqn:nucomegaplusomegaminus}
\left\Vert \cdot \right\Vert_{\mbox{\emph{nuclear}}} \leq \Omega_{B^+}^+ + \Omega_{B^+}^-.
\end{equation}
\end{lemma}
From now on, we write $\Omega^+ = \Omega_{B^+}^+$ and $\Omega^-=\Omega_{B^+}^- $. Equation \ref{lemma:triangleprop} is proved in Appendix \ref{appendix:proofmain}.


Hence, the property that our estimators should mimic is not the rank of the oracle but rather the fact that the ``non-active'' part is zero under the semi-norm induced by the active part.

Moreover, we define the norm $\underline{\Omega}$ as
\begin{equation*}
\underline{\Omega} := \Omega^+ +  \Omega^-.
\end{equation*}


\begin{remark}
Notice that the semi-norms $\Omega^+$ and $\Omega^-$ form a complete pair, meaning that $\underline{\Omega} := \Omega^+ + \Omega^-$ is a norm.
\end{remark}

The estimation error in several different norms can thus be ``computed'' in general (semi-)norms.

A tail bound for the maximal singular value of a finite sum of matrices lying in the set $\chi$ defined in Equation (\ref{eqn:space}) is given in the following theorem. For this purpose, we first need to define the Orlicz norm of a random variable.
Let $Z \in \mathbb{R}$ be a random variable and $\alpha \geq 1$ a constant. Then the $\Psi_\alpha-$Orlicz norm is defined as
\begin{equation}
\left\Vert Z \right\Vert_{\Psi_\alpha} := \inf \left\lbrace c > 0: \mathbb{E} \exp \left[\left\vert Z \right\vert^\alpha / c^\alpha \right] \leq 2 \right\rbrace
\end{equation}
\begin{theorem} [\begin{small} Proposition 2 in \cite{Koltchinskii2011} \end{small}]
\label{thm:orlicz}
Let $X_1, \dots, X_n$ be i.i.d. $q\times p$ matrices that satisfy for some $\alpha \geq 1$ (and all $i$)
\begin{equation*}
\mathbb{E} X_i=0, \left\Vert \Lambda_{\max} (X_i) \right\Vert_{\Psi_\alpha} =: K < \infty.
\end{equation*}
Define
\begin{equation*}
S^2 := \max \left\lbrace \Lambda_{\max} \left( \sum_{i =1}^n \mathbb{E} X_i X_i^T \right)/n, \Lambda_{\max} \left(\sum_{i=1}^n \mathbb{E} X_i^T X_i \right)/n \right\rbrace.
\end{equation*}
Then for a constant $C$ and for all $t>0$
\begin{align*}
\mathbb{P} \left( \Lambda_{\max} \left(\sum_{i=1}^n X_i \right)/n \geq C S \sqrt{\frac{t + \log(p+q)}{n}} \right. \\
\left. + C \log^{1/\alpha} \left( \frac{K}{S} \right) \left( \frac{t + \log(p+q)}{n} \right) \right) \leq \exp(-t).
\end{align*}
\end{theorem}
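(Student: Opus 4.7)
The plan is to view Theorem \ref{thm:orlicz} as a matrix Bernstein-type tail bound under the Orlicz ($\Psi_\alpha$) hypothesis on $\Lambda_{\max}(X_i)$. The overall strategy has three stages: (i) pass to Hermitian matrices via dilation, (ii) truncate the $X_i$ at a level calibrated to $K$ and the variance scale $S$, and (iii) apply a bounded self-adjoint matrix Bernstein inequality to the truncated sum while separately controlling the probability that truncation actually occurs. Once the calibration is fixed, the two terms on the right-hand side of the theorem correspond respectively to the sub-Gaussian (variance-dominated) and sub-exponential (truncation-scale-dominated) regimes of matrix Bernstein.

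First I would symmetrize the problem by replacing each rectangular $X_i$ with its Hermitian dilation
\[
\tilde X_i := \begin{pmatrix} 0 & X_i \\ X_i^T & 0 \end{pmatrix} \in \mathbb{R}^{(p+q) \times (p+q)},
\]
which is self-adjoint, has $\Lambda_{\max}(\tilde X_i) = \Lambda_{\max}(X_i)$, and satisfies $\mathbb{E} \tilde X_i^2 = \diag(\mathbb{E} X_i X_i^T,\, \mathbb{E} X_i^T X_i)$. The variance proxy $S^2$ then equals $\Lambda_{\max}(\sum_i \mathbb{E}\tilde X_i^2)/n$, placing us in the standard self-adjoint setup for matrix concentration.

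Next, for a truncation level $M$ to be chosen, I would write $\tilde X_i = Y_i + Z_i$ with
\[
Y_i := \tilde X_i \mathbbm{1}\{\Lambda_{\max}(\tilde X_i) \leq M\} - \mathbb{E}\bigl[\tilde X_i \mathbbm{1}\{\Lambda_{\max}(\tilde X_i) \leq M\}\bigr]
\]
centered and bounded in operator norm by $2M$. To $\sum_i Y_i$ I would apply the matrix Bernstein inequality (proved via Lieb's concavity and the Ahlswede--Winter/Tropp matrix moment generating function machinery), obtaining
\[
\mathbb{P}\left( \Lambda_{\max}\Bigl(\sum_{i=1}^n Y_i\Bigr)/n \geq C_1 S\sqrt{\tfrac{u}{n}} + C_2 M \tfrac{u}{n} \right) \leq (p+q)\exp(-c\,u),
\]
which upon setting $u = t + \log(p+q)$ produces the two terms of the theorem. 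For the residual $\sum_i Z_i$, the Orlicz condition combined with Markov's inequality yields $\mathbb{P}(\Lambda_{\max}(\tilde X_i) > M) \leq 2\exp(-(M/K)^\alpha)$, so a union bound over $i$ together with a small recentering correction shows $\sum_i Z_i = 0$ with probability at least $1-\exp(-t)$ once $M$ is chosen of order $\log^{1/\alpha}(K/S)$ up to absolute constants.

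The main obstacle I expect is the sharp calibration of the logarithmic factor $\log^{1/\alpha}(K/S)$: a naive union bound over the $n$ summands would yield $\log^{1/\alpha}(n)$ rather than $\log^{1/\alpha}(K/S)$. Sharpening this requires a layered peeling argument, truncating at several scales between $S$ and $K$ and absorbing the contribution of moderate tails into the variance proxy $S^2$, so that the Orlicz bound is only invoked for the truly rare events at scale much larger than $S$. Balancing the Bernstein sub-exponential penalty $M(t+\log(p+q))/n$ against the tail cost of the discarded $Z_i$ is what forces the ratio $K/S$ (rather than $n$) to appear inside the logarithm. Once this calibration is settled, combining the bounded matrix Bernstein bound with the tail estimate for the truncation residuals, and undoing the dilation to return from $\tilde X_i$ to $X_i$, yields the stated inequality.
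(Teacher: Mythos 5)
First, a point of reference: the paper does not prove this statement at all. Theorem \ref{thm:orlicz} is imported verbatim as Proposition 2 of \cite{Koltchinskii2011}, so there is no internal proof to compare yours against; the only fair comparison is with the argument in that reference, which does indeed follow the broad route you describe (Hermitian dilation to reduce to the self-adjoint case, a noncommutative Bernstein-type bound, and a truncation calibrated by the $\Psi_\alpha$ hypothesis). Your identification of the two terms with the sub-Gaussian and sub-exponential regimes, and of $S^2$ with $\Lambda_{\max}(\sum_i \mathbb{E}\tilde X_i^2)/n$ after dilation, is correct.

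The genuine gap is exactly where you flag it, and your proposed repair does not close it. If you discard the truncation residual by requiring $\mathbb{P}(\exists i:\Lambda_{\max}(\tilde X_i)>M)\leq \exp(-t)$, the union bound forces $M\gtrsim K\,(t+\log n)^{1/\alpha}$, which yields a second term of order $K\,(t+\log n)^{1/\alpha}(t+\log(p+q))/n$ rather than $\log^{1/\alpha}(K/S)\,(t+\log(p+q))/n$; no choice of $M$ in the event-based scheme removes the dependence on $n$ and $t$ inside the $1/\alpha$-power. The ``layered peeling over truncation scales'' you invoke is not how this is resolved, and as stated it is not an argument — the moderate-tail layers cannot simply be ``absorbed into $S^2$'' because $S^2$ is a fixed quantity of the theorem, not a free parameter. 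The actual mechanism in the reference is to keep the tail contribution inside the matrix Laplace-transform estimate: one bounds $\mathbb{E}\,\trace\exp(\lambda\sum_i \tilde X_i)$ by splitting the moment generating function of each summand (not the event space) into a bounded part, controlled by Lieb/Tropp, and a tail part, controlled for $\lambda\lesssim 1/M$ by the $\Psi_\alpha$ moment bounds; the balance of the tail moments against the variance proxy is what produces $M\asymp K\log^{1/\alpha}(K/S)$ with no $\log n$. So your skeleton is the right one, but the step that distinguishes this proposition from a routine truncated matrix Bernstein inequality — the one feature that makes the statement worth citing in this form — is precisely the step your proposal leaves unproved.
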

This theorem is used with the tail summation property of the expectation in the derivations of the tail bounds in Section 3 of the Supplementary Material.

\section{Oracle inequalities}
We first give two deterministic sharp and non-sharp oracle inequalities. The connection to the empirical process parts and to the specific loss functions follows in Subsection \ref{ss:lowrankoracle}. Let $B^0 = \underset{B' \in \mathcal{B}}{\argmin} \ R(B')$ be the target.
It is assumed that $q \leq p $.

\label{s:oracle}
\subsection{Sharp oracle inequality}
Here, we assume that the loss function is differentiable and Lipschitz continuous. The next lemma gives a connection between the empirical risk and the penalization term.

\begin{lemma}[adapted from Lemma 7.1 in \cite{geer2015}]
\label{lemma:twopoint}
Suppose that $R_n$ is differentiable. Then for all $B \in \mathcal{B}$
\begin{equation*}
-\trace \left( \dot{R}_n(\hat{B})^T(B-\hat{B }) \right) \leq \lambda \Vert B \Vert_{\mbox{\emph{nuclear}}}- \lambda \Vert \hat{B} \Vert_{\mbox{\emph{nuclear}}}.
\end{equation*}
\end{lemma}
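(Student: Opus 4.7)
The plan is to invoke the first-order optimality condition for $\hat{B}$ as a minimizer of the convex objective $F_n(B) := R_n(B) + \lambda \|B\|_{\mathrm{nuclear}}$ over the convex set $\mathcal{B}$, and then to combine it with the subgradient inequality characterizing elements of $\partial \|\cdot\|_{\mathrm{nuclear}}$.

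More concretely, since $R_n$ is assumed differentiable and the nuclear norm is convex, the subdifferential decomposes as $\partial F_n(\hat{B}) = \dot R_n(\hat{B}) + \lambda \, \partial \|\hat{B}\|_{\mathrm{nuclear}}$. The set $\mathcal{B} = \{B : \|B\|_\infty \leq \eta\}$ is convex, so the standard variational inequality for a convex minimizer on a convex set yields the existence of a subgradient $\hat{Z} \in \partial \|\hat{B}\|_{\mathrm{nuclear}}$ such that, for every $B \in \mathcal{B}$,
\begin{equation*}
\trace\!\bigl( (\dot R_n(\hat{B}) + \lambda \hat{Z})^T (B - \hat{B}) \bigr) \geq 0.
\end{equation*}
Rearranging this inequality produces
\begin{equation*}
- \trace\!\bigl( \dot R_n(\hat{B})^T (B - \hat{B}) \bigr) \leq \lambda \, \trace\!\bigl( \hat{Z}^T (B - \hat{B}) \bigr).
\end{equation*}

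The final step is to upper bound the right hand side by $\lambda \|B\|_{\mathrm{nuclear}} - \lambda \|\hat{B}\|_{\mathrm{nuclear}}$. This is exactly the subgradient inequality for the convex function $\|\cdot\|_{\mathrm{nuclear}}$ at $\hat{B}$: for any $\hat{Z} \in \partial \|\hat{B}\|_{\mathrm{nuclear}}$ and any $B$,
\begin{equation*}
\|B\|_{\mathrm{nuclear}} \geq \|\hat{B}\|_{\mathrm{nuclear}} + \trace\!\bigl( \hat{Z}^T (B - \hat{B}) \bigr).
\end{equation*}
Chaining the two displays gives the claim.

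The only substantive point requiring care is justifying the variational inequality on the constrained set $\mathcal{B}$ rather than on all of $\mathbb{R}^{p \times q}$, since $\hat{B}$ could a priori lie on the boundary $\|\hat{B}\|_\infty = \eta$. This is not really an obstacle — it is the standard KKT/normal cone argument for convex programs — but it is the one place where the convexity of $\mathcal{B}$, rather than the unconstrained first-order condition $0 \in \partial F_n(\hat{B})$, is used. Everything else is a direct, essentially two-line manipulation of subgradients.
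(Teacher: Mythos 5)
Your proof is correct, but it takes a genuinely different route from the paper's. The paper perturbs along the segment towards $B$: it sets $\tilde{B}_t := (1-t)\hat{B} + tB$, notes $\tilde{B}_t \in \mathcal{B}$ by convexity of $\mathcal{B}$, uses the minimizing property of $\hat{B}$ together with convexity of the nuclear norm to get $\bigl(R_n(\hat{B}) - R_n(\tilde{B}_t)\bigr)/t \leq \lambda \Vert B \Vert_{\mbox{nuclear}} - \lambda \Vert \hat{B} \Vert_{\mbox{nuclear}}$, and then lets $t \to 0$, invoking differentiability of $R_n$ only to identify the limit of the difference quotient as $-\trace\bigl(\dot{R}_n(\hat{B})^T(B-\hat{B})\bigr)$. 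This handles the constraint set for free (the perturbed point stays in $\mathcal{B}$) and never needs the subdifferential of the nuclear norm. Your argument instead goes through the normal-cone optimality condition $0 \in \dot R_n(\hat{B}) + \lambda\,\partial\Vert\hat{B}\Vert_{\mbox{nuclear}} + N_{\mathcal{B}}(\hat{B})$ plus the Moreau--Rockafellar sum rule, and then the subgradient inequality for the nuclear norm. That is valid (the sum rule applies since $R_n$ is differentiable and the norm is finite everywhere, and you correctly flag that the constrained variational inequality is the one point needing care), but it is heavier machinery for the same conclusion: you produce an explicit subgradient $\hat{Z}$ that the statement never uses, whereas the paper's directional-derivative argument is self-contained and sidesteps subgradient calculus entirely.
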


The following theorem is inspired by Theorem 7.1 in \cite{geer2015}. In contrast to this theorem, we need to bound the empirical process part differently. In view of the application to the matrix completion problem, we assume a specific bound on the empirical process.

\begin{theorem}
\label{thm:sharp}
Suppose that Assumptions \ref{ass:lipschitz} and \ref{ass:2margin} hold, that the loss function is differentiable, and let $H$ be the convex conjugate of $G$. Assume further for all $B' \in \mathcal{B}$ that for $\lambda_\varepsilon > 0$ and $\lambda_* >0$
\begin{equation*}
\label{eqn:empbound1}
\left\vert \trace \left( \left(\dot{R}_n \left( B' \right) - \dot{R} \left( B' \right)\right)^T \left(B - B' \right) \right) \right\vert \leq \lambda_\varepsilon\underline{\Omega}(B'-B) +\lambda_*.
\end{equation*}
 Take $\lambda > \lambda_\varepsilon$. Let $0 \leq \delta < 1$ be arbitrary, and define
\begin{equation*}
\underline{\lambda} := \lambda -\lambda_\varepsilon, \quad \overline{\lambda} := \lambda_\varepsilon+ \lambda + \delta \underline{\lambda} 
\end{equation*}
Then
\begin{align*}
&\delta \underline{\lambda} \Omega^+(\hat{B} - B) +  \delta \underline{\lambda} \Omega^- ( \hat{B} - B) + R ( \hat{B} ) - R(B) \nonumber \\
&\leq  H(\overline{\lambda}3 \sqrt{s}) + 2 \lambda \left\Vert  B^- \right\Vert_{\mbox{\emph{nuclear}}} + \lambda_*.
\end{align*}
\end{theorem}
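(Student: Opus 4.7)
The plan is to chain together the first order optimality condition from Lemma \ref{lemma:twopoint}, the two point margin from Assumption \ref{ass:2margin}, the empirical process bound assumed in the theorem, and the triangle property rewritten in the form of Lemma \ref{lemma:triangleprop}, and then to close the circle by absorbing the Frobenius norm term through the Fenchel-Young inequality $xy \leq G(y)+H(x)$.

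First I would invoke Lemma \ref{lemma:twopoint} with the generic $B \in \mathcal{B}$ to obtain
\begin{equation*}
-\trace\bigl(\dot{R}_n(\hat{B})^T(B-\hat{B})\bigr)\leq \lambda\Vert B\Vert_{\mbox{nuclear}}-\lambda\Vert \hat{B}\Vert_{\mbox{nuclear}}.
\end{equation*}
Splitting $\dot{R}_n(\hat{B})=\dot{R}(\hat{B})+(\dot{R}_n(\hat{B})-\dot{R}(\hat{B}))$ and applying Assumption \ref{ass:2margin} at $B'=\hat{B}$ gives $-\trace(\dot{R}(\hat{B})^T(B-\hat{B}))\geq R(\hat{B})-R(B)+G(\Vert \hat{B}-B\Vert_F)$. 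The empirical process hypothesis of the theorem handles the remaining trace by $\lambda_\varepsilon\underline{\Omega}(\hat{B}-B)+\lambda_*=\lambda_\varepsilon\Omega^+(\hat{B}-B)+\lambda_\varepsilon\Omega^-(\hat{B}-B)+\lambda_*$.

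Next I would apply Lemma \ref{lemma:triangleprop} to rewrite $\lambda\Vert B\Vert_{\mbox{nuclear}}-\lambda\Vert\hat{B}\Vert_{\mbox{nuclear}}\leq \lambda\Omega^+(\hat{B}-B)-\lambda\Omega^-(\hat{B}-B)+2\lambda\Vert B^-\Vert_{\mbox{nuclear}}$. Collecting everything and moving the $\Omega^-$ terms to the left produces
\begin{equation*}
\underline{\lambda}\,\Omega^-(\hat{B}-B)+R(\hat{B})-R(B)+G(\Vert \hat{B}-B\Vert_F)\leq (\lambda+\lambda_\varepsilon)\,\Omega^+(\hat{B}-B)+2\lambda\Vert B^-\Vert_{\mbox{nuclear}}+\lambda_*,
\end{equation*}
with $\underline{\lambda}=\lambda-\lambda_\varepsilon$. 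Adding the slack $\delta\underline{\lambda}\Omega^+(\hat{B}-B)$ to both sides merges $(\lambda+\lambda_\varepsilon)+\delta\underline{\lambda}$ into $\overline{\lambda}$ on the right.

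To finish, I would use that $\Omega^+$ is, by its very definition, controlled by at most three Frobenius norms of projections of its argument, giving $\Omega^+(\hat{B}-B)\leq 3\sqrt{s}\,\Vert \hat{B}-B\Vert_F$. Then the Fenchel-Young inequality $\overline{\lambda}\,3\sqrt{s}\,\Vert \hat{B}-B\Vert_F\leq G(\Vert \hat{B}-B\Vert_F)+H(\overline{\lambda}3\sqrt{s})$ cancels the $G(\Vert \hat{B}-B\Vert_F)$ sitting on the left; a final weakening $\underline{\lambda}\Omega^-(\hat{B}-B)\geq \delta\underline{\lambda}\Omega^-(\hat{B}-B)$ (valid since $0\leq\delta<1$ and $\Omega^-$ is a semi-norm) yields exactly the claimed bound. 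The main bookkeeping obstacle is keeping the right weights on $\Omega^+$ and $\Omega^-$ while reserving enough slack to later apply Fenchel-Young; the choice $\overline{\lambda}=\lambda_\varepsilon+\lambda+\delta\underline{\lambda}$ is engineered so that after the empirical and triangle bounds the coefficient of $\Omega^+$ on the right is precisely what one needs before invoking the convex conjugate.
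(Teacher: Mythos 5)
Your proposal is correct and uses exactly the same ingredients as the paper's proof: Lemma \ref{lemma:twopoint}, the two-point margin condition at $\hat{B}$, the assumed empirical process bound, Lemma \ref{lemma:triangleprop}, the bound $\Omega^+(\hat{B}-B)\leq 3\sqrt{s}\Vert \hat{B}-B\Vert_F$, and the convex conjugate inequality, combined in the same order. The only difference is that the paper splits into two cases according to the sign of $\trace(\dot{R}(\hat{B})^T(B-\hat{B}))$ relative to the target bound (handling one case directly via the margin condition and running your chain only in the other), whereas you run the chain unconditionally; since $\Omega^+(A)\leq 3\sqrt{s}\Vert A\Vert_F$ holds for every $A$ without any cone condition, your single-pass version goes through and is a mild streamlining of the paper's argument.
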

In the proof of this theorem the differentiability of the loss function and Assumption \ref{ass:2margin} are crucial. Without this property an additional term arising from the one-point-margin condition would appear in the upper bound. This term would then lead to a non-sharp bound.

\subsection{Non-sharp oracle inequality}
Instead of bounding an empirical process term depending on the derivative of the empirical and theoretical risks we need to consider differences of these functions.

\begin{theorem}
\label{thm:nonsharp}
Suppose that Assumptions \ref{ass:lipschitz} and \ref{ass:1margin} hold. Let $H$ be the convex conjugate of $G$. Suppose further that for $\lambda_\varepsilon >0$, $\lambda_*> 0$, and all $B' \in \mathcal{B}$
\begin{equation}
\label{eqn:assbound}
 \left\vert \left[R_n (B') - R(B') \right] - \left[R_n (B) - R(B) \right] \right\vert \leq \lambda_\varepsilon\underline{\Omega} (B' - B)  + \lambda_*.
\end{equation}
Let $0 < \delta <1$, take $\lambda > \lambda_\varepsilon $ and define 
\begin{equation}
\overline{\lambda} = \lambda + \lambda_\varepsilon, \quad \underline{\lambda} = \lambda - \lambda_\varepsilon.
\end{equation}
Then,
\begin{align*}
	\delta \underline{\lambda} \underline{\Omega} (\hat{B} - B) &\leq 2H(\overline{\lambda} (1+\delta) 3\sqrt{s}) \\
	&\phantom{\dots} + 2\left( \lambda_* + (R(B) - R(B^0)) \right)+ 4 \lambda \Vert B^- \Vert_{\mbox{\emph{nuclear}}}
\end{align*}
	and
\begin{align*}
R(\hat{B}) - R(B) &\leq \frac{1}{\delta} \left[ 2H(\overline{\lambda} (1+\delta) 3\sqrt{s}) +\lambda_* + 2 (R(B)-R(B^0)) \right. \\
&\phantom{\leq}\left. + 2 \lambda \Vert B^- \Vert_{\mbox{\emph{nuclear}}} \right] + \lambda_* + 2 \lambda \Vert B^- \Vert_{\mbox{\emph{nuclear}}}.
\end{align*}
\end{theorem}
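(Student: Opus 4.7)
The plan is to combine the optimality of $\hat{B}$ as a penalised $M$-estimator with the nuclear-norm triangle property (Lemma \ref{lemma:triangleprop}) to produce a ``basic inequality,'' and then convert it to the final form using the Fenchel--Young inequality together with the one-point-margin condition.

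Starting from the defining inequality $R_n(\hat{B}) + \lambda \|\hat{B}\|_{\mathrm{nuclear}} \leq R_n(B) + \lambda \|B\|_{\mathrm{nuclear}}$, I would add $R(\hat{B}) - R(B)$ to both sides and invoke the empirical-process assumption (\ref{eqn:assbound}) with $B' = \hat{B}$ to control the centered noise term by $\lambda_\varepsilon \underline{\Omega}(\hat{B} - B) + \lambda_*$. Lemma \ref{lemma:triangleprop} then bounds $\|B\|_{\mathrm{nuclear}} - \|\hat{B}\|_{\mathrm{nuclear}}$ by $\Omega^+(\hat{B} - B) - \Omega^-(\hat{B} - B) + 2\|B^-\|_{\mathrm{nuclear}}$. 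Since $\underline{\Omega} = \Omega^+ + \Omega^-$, collecting terms with coefficients $\overline{\lambda}$ and $\underline{\lambda}$ yields the basic inequality
$$R(\hat{B}) - R(B) + \underline{\lambda}\, \Omega^-(\hat{B} - B) \;\leq\; \overline{\lambda}\, \Omega^+(\hat{B} - B) + 2\lambda \|B^-\|_{\mathrm{nuclear}} + \lambda_*. \quad (\star)$$

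To handle $\overline{\lambda}\,\Omega^+(\hat{B}-B)$, note that each Frobenius term in the definition of $\Omega^+$ is bounded by $\|\hat{B} - B\|_F$, so $\Omega^+(\hat{B}-B) \leq 3\sqrt{s}\,\|\hat{B}-B\|_F \leq 3\sqrt{s}(\|\hat{B}-B^0\|_F + \|B-B^0\|_F)$ by the triangle inequality in $\|\cdot\|_F$. The Fenchel--Young inequality $uv \leq H(u) + G(v)$ together with Assumption \ref{ass:1margin} in the form $G(\|\cdot - B^0\|_F) \leq R(\cdot) - R(B^0)$ then converts such linear-in-norm expressions into sums of an $H$-term and an excess risk of $\hat{B}$ or of $B$. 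For the $\underline{\Omega}$-bound I would add $\delta\underline{\lambda}\,\Omega^+(\hat{B}-B)$ to both sides of $(\star)$ and use $\overline{\lambda} + \delta\underline{\lambda} \leq \overline{\lambda}(1+\delta)$, then apply Fenchel--Young with $u = \overline{\lambda}(1+\delta)3\sqrt{s}$ and the margin on $\|\hat{B}-B^0\|_F$ and $\|B-B^0\|_F$ separately; the $R(\hat{B}) - R(B^0)$ produced on the right cancels the $R(\hat{B}) - R(B)$ on the left, and since $\delta<1$ the surviving LHS dominates $\delta\underline{\lambda}\,\underline{\Omega}(\hat{B}-B)$.

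For the excess-risk bound I would apply Fenchel--Young in \emph{rescaled} form to $(\star)$: take $u = \overline{\lambda}(1+\delta)3\sqrt{s}$ and $v = \|\cdot - B^0\|_F/(1+\delta)$, and use convexity of $G$ with $G(0)=0$ to bound $G(v) \leq G(\|\cdot - B^0\|_F)/(1+\delta)$ before invoking the margin. Substituting back into $(\star)$ causes $R(\hat{B})-R(B)$ to reappear on the right with coefficient $1/(1+\delta)$; after transposing, a factor $\delta/(1+\delta)$ survives on the left, and multiplying through by $(1+\delta)/\delta = 1 + 1/\delta$ produces the stated $1/\delta$-type inequality. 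The main obstacle throughout is this ``self-referential'' appearance of $R(\hat{B})$ once the one-point margin is invoked; the parameter $\delta$ is engineered precisely to absorb it, either through the $\overline{\lambda}+\delta\underline{\lambda}$ trick for the first claim or through the $(1+\delta)$-rescaled Fenchel--Young inequality for the second.
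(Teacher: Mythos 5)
Your overall route is the paper's: the same basic inequality obtained from the minimizing property of $\hat{B}$, the bound (\ref{eqn:assbound}) at $B'=\hat{B}$, and Lemma \ref{lemma:triangleprop}, followed by Fenchel--Young, the one-point margin, and the $(1+\delta)$-absorption of the self-referential $R(\hat{B})-R(B^0)$ term. There are two deviations. First, you dispense with the paper's case distinction (on whether $\overline{\lambda}\Omega^+(\hat{B}-B)$ exceeds $\tfrac{1-\delta}{\delta}(\lambda_*+R(B)-R(B^0)+2\lambda\Vert B^-\Vert_{\mbox{nuclear}})$): your direct argument --- add $\delta\underline{\lambda}\,\Omega^+(\hat{B}-B)$ to $(\star)$, bound $(\overline{\lambda}+\delta\underline{\lambda})\Omega^+(\hat{B}-B)\le 2H(\overline{\lambda}(1+\delta)3\sqrt{s})+G(\Vert\hat{B}-B^0\Vert_F)+G(\Vert B-B^0\Vert_F)$, invoke the margin and cancel $R(\hat{B})$ --- is valid and in fact yields the first claim with the better constants $\lambda_*+2\lambda\Vert B^-\Vert_{\mbox{nuclear}}$ in place of $2\lambda_*+4\lambda\Vert B^-\Vert_{\mbox{nuclear}}$; the paper's doubled constants come from taking the worse of its two cases. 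Second, for the excess-risk bound your rescaled Fenchel--Young (with $v=\Vert\cdot-B^0\Vert_F/(1+\delta)$ and $G(v/(1+\delta))\le G(v)/(1+\delta)$) is sound, but if you track the algebra it delivers
\begin{align*}
R(\hat{B})-R(B) &\le \frac{1}{\delta}\Bigl[2H(\overline{\lambda}(1+\delta)3\sqrt{s})+\lambda_*+2(R(B)-R(B^0))+2\lambda\Vert B^-\Vert_{\mbox{nuclear}}\Bigr]\\
&\phantom{\le}+2H(\overline{\lambda}(1+\delta)3\sqrt{s})+\lambda_*+2\lambda\Vert B^-\Vert_{\mbox{nuclear}},
\end{align*}
i.e.\ an extra additive $2H(\overline{\lambda}(1+\delta)3\sqrt{s})$ outside the $1/\delta$-bracket relative to the stated bound (harmless for the rate, but not literally the theorem). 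The paper instead adds $\delta\overline{\lambda}\,\Omega^+(\hat{B}-B)$ rather than $\delta\underline{\lambda}\,\Omega^+(\hat{B}-B)$ in the first step, so that the first claim's derivation directly bounds $\delta\overline{\lambda}\,\Omega^+(\hat{B}-B)$ by the bracketed quantity; substituting $\overline{\lambda}\Omega^+(\hat{B}-B)\le\frac{1}{\delta}[\cdots]$ back into $(\star)$ then gives the second claim with exactly the stated constants. That one-line change repairs your second step and lets you drop the separate rescaled argument altogether.
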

It has to be noticed that the above bound is ``good'' only if $R(B) - R(B^0)$ is already small. The main cause for the non-sharpness is Assumption \ref{ass:1margin} that leads to an additional term in the upper bound of the inequality.
\subsection{Applications to specific loss functions}
\label{ss:lowrankoracle}
We now apply the deterministic sharp and non-sharp oracle inequalities to the case of the Huber loss and absolute value loss, respectively. We assume in both cases that the distribution of the errors is symmetric around $0$ so that $B^0 = B^*$. This is discussed in detail in Section \ref{ss:distoferrors}.
\bigskip

\textbf{Huber Loss-sharp oracle inequality}

We first consider the case that arises by choosing the Huber loss. Theorem \ref{thm:sharp} together with Lemma \ref{lemma:2margin} and the first claim of Lemma 3.2 in the Supplement imply the following corollary. It is useful to notice that the Lipschitz constant of the Huber loss is $2 \kappa$.

\begin{corollary}
\label{cor:usrcompletionhuber}
Let $B = B^+ + B^-$ where $B^+$ and $B^-$ are defined in Equation \ref{eqn:oracle}. Let Assumption \ref{ass:ass4} be satisfied.

 For a constant $C_0 > 0$ let 
\begin{align*}
\lambda_\varepsilon= 2 (4\eta + 2 \kappa) \left( (8 C_0 + \sqrt{2})\sqrt{\frac{\log (p+q)}{nq}} + 8C_0\sqrt{\log(1+q)} \frac{\log(p+q)}{n} \right)
\end{align*}
and $\lambda_* = 8 \eta (4\eta + 2 \kappa) p \log(p+q)/(3n) + \lambda_\varepsilon$.

Assume that $\lambda>\lambda_\varepsilon$. Take $0 \leq \delta < 1$,
\begin{equation}
\underline{\lambda} := \lambda - \lambda_\varepsilon \ \mbox{and} \ \overline{\lambda} := \lambda_\varepsilon + \lambda + \delta \underline{\lambda}
\end{equation}
Choose $j_0 := \lceil \log_2 (7 q \sqrt{pq} \eta )\rceil$ and define
\begin{equation*}
\alpha = (j_0 + 2) \exp(-p \log(p+q)).
\end{equation*}
Then we have with probability at least $1-\alpha$ that
\begin{align*}
&\delta \underline{\lambda} \Omega^+( \hat{B}_H - B ) + \delta \underline{\lambda} \Omega^- (\hat{B}_H - B) + R (\hat{B}_H) - R(B) \\
&\leq \frac{p q C_1^2 \bar{\lambda}^2 9s}{2} + 2 \lambda \left\Vert B^- \right\Vert_{\mbox{\emph{nuclear}}} + \lambda_*.
\end{align*} 
\end{corollary}
Assumption \ref{ass:ass4} guarantees that the risk function is sufficiently convex. From this assumption we also obtain a bound for the possible values of the tuning parameter $\kappa$. We can also see that the results hold for errors with a heavier tail than the Gaussian. The choice of the noise level $\lambda_\varepsilon$ and consequently of the tuning parameter $\lambda$ results from the the probability inequalities for the empirical process in Section \ref{s:empiric}.  The quantity $\lambda_*$ is also a consequence of the bound on the empirical process part. However, it does not affect the asymptotic rates. 

\bigskip
\textbf{Absolute value loss - non-sharp oracle inequality}

The next corollary is an application to the case of the absolute value loss. Theorem \ref{thm:nonsharp} combined with Lemma \ref{lemma:1margin} and the second claim of Lemma 3.2 in the Supplement lead to the following corollary. The Lipschitz constant in this case is $1$.
\begin{corollary}
\label{thm:oracle}
Let the oracle $B$ be as in \ref{eqn:oracle}. Suppose that Assumption \ref{ass:ass5} is satisfied.
 For a constant $C_0 > 0$ let
 \begin{align*}
 \lambda_\varepsilon =2 \left( (8 C_0 + \sqrt{2})\sqrt{\frac{\log (p+q)}{nq}} + 8C_0\sqrt{\log(1+q)} \frac{\log(p+q)}{n} \right)
 \end{align*}
 and  $\lambda_* = 8 \eta p \log(p+q)/(3n) + \lambda_\varepsilon$.
Take $0< \delta< 1$ and $\lambda > \lambda_\varepsilon$. Choose $j_0 := \lceil \log_2 (7 q \sqrt{pq} \eta )\rceil$ and define
\begin{equation*}
\alpha = (j_0 + 2) \exp(-p \log(p+q)).
\end{equation*}
Then we have with probability at least  $1-\alpha$ that 
\begin{align*}
&\delta \underline{\lambda} \underline{\Omega} (\hat{B} - B) \\
&\leq 6 C^2 \overline{\lambda}^2 \left(1+\delta \right)^2 p q s + 2 \lambda_* + 2(R(B) - R(B^0)) + 4 \lambda \Vert B^- \Vert_{\mbox{\emph{nuclear}}}
\end{align*}
and 
\begin{align*}
R(\hat{B}) - R(B) &\leq \frac{1}{\delta} \Big[  6 C_2^2 \overline{\lambda}^2 \left(1+\delta \right)^2 p q s + \lambda_* + 2 (R(B)-R(B^0)) \\
&\phantom{\leq \frac{1}{\delta}}+ 2 \lambda \Vert B^- \Vert_{\mbox{\emph{nuclear}}} \Big] + \lambda_* + 2 \lambda \Vert B^- \Vert_{\mbox{\emph{nuclear}}}.
\end{align*}
\end{corollary}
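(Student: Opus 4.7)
The strategy is to package the corollary as a direct instantiation of Theorem \ref{thm:nonsharp}, so the task reduces to (i) verifying Assumptions \ref{ass:lipschitz} and \ref{ass:1margin}, (ii) computing the convex conjugate $H$ of $G$, and (iii) producing a high probability empirical-process bound of the form
\begin{equation*}
\left| [R_n(B') - R(B')] - [R_n(B) - R(B)] \right| \leq \lambda_\varepsilon \, \underline{\Omega}(B' - B) + \lambda_*
\end{equation*}
uniformly over $B' \in \mathcal{B}$ with the stated $\lambda_\varepsilon$ and $\lambda_*$. Step (i) is immediate: the absolute value loss is $1$-Lipschitz, and Lemma \ref{lemma:1margin} turns Assumption \ref{ass:ass5} into Assumption \ref{ass:1margin} with $G(u) = u^2/(2C_2^2 pq)$. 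Step (ii) is a one-line Legendre transform, giving $H(v) = C_2^2 pq \, v^2 / 2$, so that $2H(\overline{\lambda}(1+\delta)3\sqrt{s})$ equals a constant multiple of $C_2^2 \overline{\lambda}^2 (1+\delta)^2 pq s$, matching the leading term in the corollary up to the numerical constant.

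The bulk of the work is step (iii). First I would symmetrize: introduce i.i.d.\ Rademacher multipliers $\epsilon_i$ so that the centered empirical process is controlled by
\begin{equation*}
\sup_{B' \in \mathcal{B}} \left| \frac{1}{n}\sum_{i=1}^n \epsilon_i \bigl( |Y_i - \trace(X_i B')| - |Y_i - \trace(X_i B)| \bigr) \right|.
\end{equation*}
Since $\rho(x)=|x|$ is $1$-Lipschitz, the contraction inequality (Ledoux--Talagrand) replaces the differences of losses by the linear functionals $\epsilon_i \trace(X_i (B' - B))$, at the cost of a universal factor. The resulting centered process equals $\trace(W_n^T (B'-B))$ with $W_n := n^{-1} \sum_i \epsilon_i X_i$, and the dual norm inequality together with Lemma \ref{lemma:dualbound} gives the key pointwise bound
\begin{equation*}
|\trace(W_n^T (B'-B))| \leq \underline{\Omega}_*(W_n) \, \underline{\Omega}(B'-B) \leq \Lambda_{\max}(W_n) \, \underline{\Omega}(B'-B).
\end{equation*}
Theorem \ref{thm:orlicz} applied to $\{\epsilon_i X_i\}$ (each mask is bounded and $S^2 \lesssim 1/(pq)$ under the uniform sampling distribution) produces the subexponential tail bound for $\Lambda_{\max}(W_n)$ whose Gaussian part contributes $\sqrt{\log(p+q)/(nq)}$ and whose subexponential part contributes $\sqrt{\log(1+q)}\log(p+q)/n$, matching the stated $\lambda_\varepsilon$ up to the explicit constant $8C_0 + \sqrt{2}$. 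The additive term $\lambda_* = 8\eta p\log(p+q)/(3n) + \lambda_\varepsilon$ collects the bounded-difference (Bernstein) bias that is introduced when a peeling argument is used to pass from a fixed radius to a uniform statement over all $B' \in \mathcal{B}$.

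The genuine obstacle is this peeling step: because the statement is uniform over the entire class $\mathcal{B}$, the quantity $\underline{\Omega}(B' - B)$ ranges over a continuum and cannot be absorbed into a single tail bound. Following the second claim of Lemma \ref{lemma:peeling}, I would partition $\mathcal{B}$ into $j_0 := \lceil \log_2(7 q \sqrt{pq}\eta)\rceil$ geometric shells according to the size of $\underline{\Omega}(B'-B)$ (the cut-off $7 q \sqrt{pq} \eta$ is the worst case value produced by $\|B\|_\infty \leq \eta$), apply the Orlicz tail bound of Theorem \ref{thm:orlicz} on each shell with failure probability $\exp(-p\log(p+q))$, and take a union bound over the $j_0+2$ slices to obtain the global failure probability $\alpha = (j_0+2)\exp(-p\log(p+q))$. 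With the empirical process bound in hand, Theorem \ref{thm:nonsharp} applied with $L=1$, $G(u) = u^2/(2C_2^2 pq)$, and $H(v) = C_2^2 pq v^2/2$ yields both displayed inequalities by direct substitution, with the oracle decomposition $B = B^+ + B^-$ from \eqref{eqn:oracle} playing the role demanded by the theorem through Lemma \ref{lemma:triangleprop}.
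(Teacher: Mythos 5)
Your proposal is correct and follows essentially the same route as the paper: the corollary is obtained by plugging Lemma \ref{lemma:1margin} (giving $G(u)=u^2/(2C_2^2pq)$, hence $H(v)=C_2^2pqv^2/2$), the Lipschitz constant $L=1$, and the uniform empirical-process bound of the second claim of Lemma \ref{lemma:peeling} (itself built from symmetrization, contraction, the dual norm inequality, the Orlicz singular-value bound and Bousquet's inequality, followed by peeling) into Theorem \ref{thm:nonsharp}. Your computation of $2H(\overline{\lambda}(1+\delta)3\sqrt{s})$ actually yields the constant $9$ rather than the $6$ printed in the corollary, but this discrepancy is in the paper's stated constant, not in your argument, and you correctly flag it as a matter of numerical constants only.
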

Also in this case, the choices of $\lambda_\varepsilon$ and $\lambda_*$ are a consequence of the probability bounds.
\section{Asymptotics and Weak sparsity}
\label{s:asymptotics}
The results in Section \ref{s:oracle} are valid for finite values of the dimension of the matrix $p,q$, the rank, and the number of observed entries $n$. A question that is answered in this section is how the estimation errors of the proposed estimators behave when $n, p,$ and $q$ are allowed to grow.

As mentioned in \cite{negahban2012restricted}, practical reasons motivate the assumption that the matrix $B^0$ is not exactly low-rank but only approximately. In relation to the matrix completion problem one observes that the ratings given by the users are unlikely to be exactly equal but rather very similar. This translates to a matrix that is not low-rank. However, it is sensible to assume that the matrix is almost low-rank. The notion of weak sparsity quantifies this assumption by assuming that for some $0 <r<1$ and $\rho > 0$
\begin{equation}
\label{eqn:weaksparsity}
 \sum_{k=1}^q \left( \Lambda_k^0 \right)^r =: \rho_r^r ,
\end{equation}
where $\Lambda_1^0, \dots, \Lambda_q^0$ are the singular values of $B^0$. For $r = 0$ we have under the convention that $0^0 = 0$ that
\begin{equation*}
\sum_{k=1}^q \left( \Lambda_k^0 \right)^0 = \sum_{k=1}^q \mathbbm{1}_{\left\lbrace \Lambda_k^0 > 0 \right\rbrace} = s_0,
\end{equation*}
where $s_0$ is the rank of $B^0$. The following lemma gives a bound of the non-active part of the matrix $B$ that appears in the oracle bounds.

\begin{lemma}
\label{lemma:weakspars}
\label{LEMMA:WEAKSPARS}
For $\sigma > 0$ we may take
\begin{equation}
\left\Vert B^- \right\Vert_{\mbox{\emph{nuclear}}} \leq \sigma^{1-r} \rho_r^r,
\end{equation}
and
\begin{equation*}
s \leq \sigma^{-r} \rho_r^r.
\end{equation*}
\end{lemma}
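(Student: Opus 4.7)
The proof is essentially a thresholding argument on the singular values of $B^0$, in the same spirit as the standard weak-$\ell_r$ arguments used in the Lasso literature.

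The plan is to choose the splitting in the singular value decomposition of the oracle $B = B^0$ by thresholding at level $\sigma$. Concretely, I would set
\begin{equation*}
s := \#\bigl\{k : \Lambda_k^0 \geq \sigma \bigr\},
\end{equation*}
so that $B^+$ collects the contributions of the ``large'' singular values $\Lambda_k^0 \geq \sigma$ and $B^-$ collects the contributions of the ``small'' singular values $\Lambda_k^0 < \sigma$. This is the trade-off mentioned after Lemma \ref{def:triangle}: larger $\sigma$ makes $s$ smaller but $\|B^-\|_{\text{nuclear}}$ larger, and vice versa.

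For the bound on $s$, I would use the obvious inequality
\begin{equation*}
s \, \sigma^r = \sum_{k:\Lambda_k^0 \geq \sigma} \sigma^r \leq \sum_{k:\Lambda_k^0 \geq \sigma} (\Lambda_k^0)^r \leq \sum_{k=1}^q (\Lambda_k^0)^r = \rho_r^r,
\end{equation*}
which immediately yields $s \leq \sigma^{-r} \rho_r^r$. For the bound on the nuclear norm of the non-active part, I would factor out $(\Lambda_k^0)^{1-r}$ from each small singular value and use $(\Lambda_k^0)^{1-r} \leq \sigma^{1-r}$ (valid since $1-r > 0$ and $\Lambda_k^0 < \sigma$):
\begin{equation*}
\|B^-\|_{\text{nuclear}} = \sum_{k:\Lambda_k^0 < \sigma} \Lambda_k^0 = \sum_{k:\Lambda_k^0 < \sigma} (\Lambda_k^0)^{1-r} (\Lambda_k^0)^r \leq \sigma^{1-r} \sum_{k:\Lambda_k^0 < \sigma} (\Lambda_k^0)^r \leq \sigma^{1-r} \rho_r^r.
\end{equation*}

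There is no real obstacle here; the only subtle point is to notice that the decomposition $B = B^+ + B^-$ introduced before Lemma \ref{def:triangle} is parametrized by an integer $s$ that we are free to choose, and that the ``thresholding'' choice based on $\sigma$ is consistent with that ordered-by-singular-value structure. Once that is granted, both inequalities follow from the one-line arithmetic above, and together they produce the claimed balance between the effective rank $s$ and the approximation error $\|B^-\|_{\text{nuclear}}$ that is plugged into Corollaries \ref{cor:usrcompletionhuber} and \ref{thm:oracle}.
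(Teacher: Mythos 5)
Your proof is correct and follows essentially the same thresholding argument as the paper: both define $s$ by cutting the ordered singular values at level $\sigma$, bound $s\sigma^r$ by $\sum_k (\Lambda_k^0)^r$, and bound $\Vert B^-\Vert_{\mbox{nuclear}}$ by pulling out a factor $\sigma^{1-r}$ from the small singular values (your factorization $(\Lambda_k^0)^{1-r}(\Lambda_k^0)^r \leq \sigma^{1-r}(\Lambda_k^0)^r$ is just a rewriting of the paper's step $\Lambda_k/\sigma \leq (\Lambda_k/\sigma)^r$ for $\Lambda_k \leq \sigma$). The only difference is the immaterial boundary convention $\geq \sigma$ versus $> \sigma$ in defining the active set.
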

We first consider the asymptotic behavior of our estimators in the case of an exactly low-rank matrix and deduce from this the asymptotics for the case of an approximately low-rank matrix.

\subsection{Asymptotics}
\label{ss:asymptotics}
\subsubsection{sharp}
\label{sss:asymptoticssharp}
By Corollary \ref{cor:usrcompletionhuber}, assuming that $q \log(1+q) = o \left( \frac{n}{\log(p+q)} \right)$, and therefore using the choice for the noise level
\begin{equation*}
\lambda_\varepsilon \asymp \sqrt{\frac{\log(p+q)}{nq}}
\end{equation*}
we obtain
\begin{align}
R(\hat{B}_H) - R(B^0) &\leq R(B) - R(B^0)  \nonumber \\
&\phantom{\dots} + \mathcal{O}_{\mathbb{P}} \left(\frac{p s \log(p+q)}{n} +\sqrt{\frac{\log (p+q)}{n q}} \left(1+  \left\Vert B^- \right\Vert_{\mbox{nuclear}} \right)\right).\label{eqn:rate1}
\end{align}
We choose for simplicity the oracle to be the matrix $B^0$ itself with $s_0 = \mbox{rank}(B^0)$. Then, we make use of the two point margin condition that is shown to hold in Lemma \ref{lemma:2margin}.The resulting rate is then given by
\begin{equation}
\label{eqn:sharprate}
\Vert \hat{B}_H - B^0 \Vert_F^2 = \mathcal{O}_{\mathbb{P}} \left( (4\eta+2\kappa)^2 C_1^4 \frac{p^2 q s_0 \log(p+q)}{n} \right),
	\end{equation}
	where $\kappa$ is the Huber parameter and $C_1$ is the constant from Lemma \ref{lemma:2margin}.

\begin{remark}
The rate (\ref{eqn:sharprate}) depends on $\eta$ as in \cite{Koltchinskii2011} and on the Lipschitz constant of the loss function which is typically smaller than $\eta$. If $C_1^2 = O(\eta)$, the constant in front of the rate is of order $O(\eta^4)$. This is a ``worst-case'' scenario that shows the cost that is paid when allowing for very general error distributions as in our case. We emphasize that in this case the distribution of the errors is not required to have a density.
\end{remark}

In addition to the rate obtained for the Frobenius norm, we are also able to derive rates for the estimation error measured in nuclear norm. From Corollary \ref{cor:usrcompletionhuber} and Equation  \ref{eqn:nucomegaplusomegaminus} under the previous conditions it follows that
\begin{equation}
\Vert \hat{B}_H - B^0\Vert_{\text{nuclear}} = \mathcal{O}_{\mathbb{P}} \left(C_1^2 p q s_0 \sqrt{\frac{\log(p + q)}{nq}} \right).
\end{equation}

\subsubsection{non-sharp}
\label{sss:asymptoticsnonsharp}
By Corollary \ref{thm:oracle} it is known that the assumption \linebreak $q \log(1+q) = o \left( \frac{n}{\log(p+q)} \right)$ leads to the choice $\lambda_\varepsilon \asymp \sqrt{\log (p+q) /nq}$. Therefore, we have
\begin{align}
&R ( \hat{B} ) - R ( B^0 ) \nonumber \\
&= \mathcal{O}_\mathbb{P} \left( \frac{p s \log (p+q)}{n} + R \left(B \right) - R \left(B^0 \right) + \sqrt{\frac{\log (p+q)}{nq}} (1+\left\Vert B^- \right\Vert_{\mbox{nuclear}}) \right). \label{eqn:rate2}
\end{align}
What can be observed comparing the rates in Equations (\ref{eqn:rate1}) and (\ref{eqn:rate2}) is the presence of the additional term $R(B) - R(B^0)$ in the non-sharp case in contrast to the sharp case. We choose again the oracle to be the matrix $B^0$ itself. By the one point margin condition derived in Lemma \ref{lemma:1margin} we see that the rate of convergence in this case is given by
\begin{equation}
\Vert \hat{B} - B^0 \Vert_F^2 = \mathcal{O}_{\mathbb{P}} \left(C_2^4 \frac{p^2 q s_0 \log(p+q)}{n} \right),
\end{equation}
where the constant $C_2$ comes from Lemma \ref{lemma:1margin}.

\begin{remark}
If $C_2^2 = O(\eta)$  a comparison with the rates obtained in \cite{Koltchinskii2011} shows that the rates agree. In contrast to the rate obatined for the Huber loss (Equation \ref{eqn:sharprate}), the distribution of the errors is assumed to have a density. This leads to a constant of order $O(\eta^2)$ in a ``worst-case'' scenario. It is a natural consequence of the stronger assumption on the distribution of the errors. This is comparable to the constant obtained in \cite{Koltchinskii2011}.
\end{remark}

In analogy to the previous case, we are able to derive a rate for the estimation error measured in nuclear norm:
\begin{equation}
\Vert \hat{B} - B^0 \Vert_{\text{nuclear}} = \mathcal{O}_\mathbb{P} \left(C_2^2 p q s_0 \sqrt{\frac{\log (p+ q)}{nq}} \right).
\end{equation}

The rates are indeed very slow but this is not surprising given that per entry the number of observations is about $n/(pq)$. The price to pay for the estimation of the reduced number of parameters $p s_0$ is given by the term $\log(p+q)$.

\subsection{Weak sparsity}
\label{ss:weaksparsity}
In what follows, the asymptotic behavior of the proposed estimators is discussed when applied to an estimation problem where one aims at estimating a matrix that is not exactly low-rank.
With Lemma \ref{lemma:weakspars} and the rates given in the previous section we are able to derive an explicit rate also for the approximatley low-rank case. For this purpose, we assume that Equation \ref{eqn:weaksparsity} holds.

\subsubsection{Huber estimator}
\label{sss:weaksparsitysharp}
The following corollary gives rates for the estimation error of the Huber estimator when used for estimation of a not exactly low-rank matrix.
\begin{corollary}
\label{cor:rate}
With $q \log(1+q) =  o\left( \frac{n}{\log(p+q)}\right)$ we choose
\begin{equation*}
\lambda_\varepsilon \asymp \sqrt{\frac{\log (p+q)}{nq}}.
\end{equation*}
We then have
\begin{equation*}
\left\Vert \hat{B}_H - B^0 \right\Vert_F^2 = \mathcal{O}_{\mathbb{P}} \left((\eta + \kappa)^2 C_1^4 \frac{p^2 q \log(p+q)}{n} \right)^{1-r} \rho_r^{r}.
\end{equation*}
\end{corollary}

\subsubsection{Absolute value estimator}
\label{sss:weaksparsitynonsharp}
Using the oracle inequality under the weak sparsity assumption we obtain the following result.
\begin{corollary}
\label{cor:consist}
With $q \log(1+q) =  o\left( \frac{n}{\log(p+q)}\right)$ we choose
\begin{align*}
\lambda_\varepsilon &\asymp \sqrt{\frac{\log (p+q)}{nq}}.
\end{align*}
Then we have for the Frobenius norm of the estimation error
\begin{equation}
\left\Vert \hat{B} - B^0 \right\Vert_F^2 = \mathcal{O}_{\mathbb{P}} \left( \frac{p^2 q \log (p+q)}{n} \right)^{1-r} \rho_r^{r}.
\end{equation}
\end{corollary}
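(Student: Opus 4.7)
My plan is to specialize the non-sharp oracle inequality of Corollary \ref{thm:oracle} to the weak-sparsity setting and then translate the resulting excess-risk bound into a Frobenius-norm bound through the margin condition. The three ingredients I will use are: (i) the oracle bound, (ii) Lemma \ref{lemma:weakspars} with a free threshold $\sigma$ to be optimized, and (iii) the one-point margin implication of Lemma \ref{lemma:1margin}.

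First I would apply Corollary \ref{thm:oracle} with the oracle chosen to be $B = B^0$ itself. This is admissible because $B^0 \in \mathcal{B}$ by definition of the target, and it makes the approximation-error term $R(B) - R(B^0)$ vanish. The remaining data-dependent terms in the upper bound for $R(\hat B) - R(B^0)$ then take the schematic form
\[
R(\hat B) - R(B^0) \;\lesssim\; pq\,\bar\lambda^{2}\, s \;+\; \lambda\,\|B^{-}\|_{\mathrm{nuclear}} \;+\; \lambda_{*},
\]
where the active/non-active decomposition $B^0 = B^{+} + B^{-}$ is free for me to choose.

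Next, for a free threshold $\sigma > 0$, I would take $B^{+}$ to consist of those singular components of $B^0$ whose singular values strictly exceed $\sigma$. Lemma \ref{lemma:weakspars} then supplies $s \leq \sigma^{-r}\rho_r^{r}$ and $\|B^{-}\|_{\mathrm{nuclear}} \leq \sigma^{1-r}\rho_r^{r}$, and substituting these into the oracle bound gives an upper bound of the form $pq\,\lambda^{2}\sigma^{-r}\rho_r^{r} + \lambda\,\sigma^{1-r}\rho_r^{r} + \lambda_{*}$. Minimising over $\sigma$ balances the two leading terms at $\sigma \asymp pq\,\lambda$, which yields an excess-risk bound proportional to $(pq)^{1-r}\lambda^{2-r}\rho_r^{r}$ up to the residual $\lambda_{*}$. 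To convert this into a Frobenius-norm statement I would invoke the one-point margin from Lemma \ref{lemma:1margin}, namely $R(\hat B) - R(B^0) \geq \|\hat B - B^0\|_F^{2}/(2 C_2^{2} pq)$, so that multiplying the excess-risk bound by $pq$ gives a Frobenius bound. Plugging in the chosen noise level $\lambda \asymp \sqrt{\log(p+q)/(nq)}$ and using the identity $(pq\lambda)^{2} \asymp p^{2} q \log(p+q)/n$ rearranges this into the claimed rate $\left(p^{2} q \log(p+q)/n\right)^{1-r}\rho_r^{r}$. The stochastic aspect is taken care of by the probability $1-\alpha$ appearing in Corollary \ref{thm:oracle}; since $\alpha = (j_0+2)\exp(-p\log(p+q)) \to 0$ as the dimensions grow, the deterministic conclusion upgrades to the stated $\mathcal{O}_{\mathbb{P}}$ bound.

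The main obstacle I expect is careful bookkeeping of the various $pq$ factors: one from the margin constant $1/(pq)$ intrinsic to the uniform-sampling completion model, one from the factor $pq$ multiplying $\bar\lambda^{2} s$ in the oracle inequality, and one from the balance point $\sigma \asymp pq\,\lambda$ in the threshold optimization. A secondary task is checking that the residual $\lambda_{*}$, which is of order $p\log(p+q)/n$ up to the absorbed $\lambda_\varepsilon$, is indeed of strictly lower order than the main weak-sparsity rate under the sample-size assumption $q\log(1+q) = o(n/\log(p+q))$; only then is it legitimate to absorb $\lambda_{*}$ into the leading term when passing to the $\mathcal{O}_{\mathbb{P}}$ statement.
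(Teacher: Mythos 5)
Your overall strategy is the one the paper intends (the paper gives no explicit proof of this corollary, only the instruction to combine the non-sharp oracle inequality with Lemma \ref{lemma:weakspars} and the margin condition), but your final step contains a concrete exponent error. Balancing $pq\lambda^2\sigma^{-r}\rho_r^r$ against $\lambda\sigma^{1-r}\rho_r^r$ at $\sigma \asymp pq\lambda$ and then multiplying by $pq$ gives a Frobenius bound of order $(pq\lambda)^{2-r}\rho_r^r$, and since $(pq\lambda)^2 \asymp p^2q\log(p+q)/n$ this is $\left(p^2q\log(p+q)/n\right)^{1-r/2}\rho_r^r$, not the stated $\left(p^2q\log(p+q)/n\right)^{1-r}\rho_r^r$. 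The identity you invoke at the end does not "rearrange" $(pq\lambda)^{2-r}$ into exponent $1-r$; it would only do so for $r=0$.

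There are two clean ways to close the gap. Either (a) choose the threshold as $\sigma \asymp (pq\lambda)^2 \asymp p^2q\log(p+q)/n$ rather than $\sigma \asymp pq\lambda$: then $pq\cdot pq\lambda^2 s \leq \sigma^{1-r}\rho_r^r = (pq\lambda)^{2-2r}\rho_r^r$, which is exactly the claimed rate, while the cross term $pq\lambda\,\|B^-\|_{\mathrm{nuclear}} \leq (pq\lambda)^{3-2r}\rho_r^r$ is of lower order precisely when $pq\lambda \lesssim 1$, i.e.\ in the regime where the rate tends to zero; or (b) keep your balanced choice and observe that when $p^2q\log(p+q)/n \leq 1$ the exponent $1-r/2$ bound is stronger than, and hence implies, the exponent $1-r$ bound claimed in the corollary (your bound is in fact sharper than the stated one in that regime). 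As written, however, the derivation asserts an equality of rates that does not hold, so the proof is incomplete at its last line. The remaining ingredients — taking $B=B^0$ so the approximation error vanishes, converting excess risk to squared Frobenius norm via $G(u)=u^2/(2C_2^2pq)$, and checking that $\lambda_*$ is of lower order — are handled correctly and match the paper's route.
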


\section{Simulations}
\label{s:simulations}
In this section, the robustness of the Huber estimator \ref{eqn:huber} is empirically demonstrated. In Subsection \ref{ss:lowranksparse}, the Huber estimator is compared with the estimator proposed in \cite{klopp2014robust} under model \ref{eqn:traceregr} and \ref{eqn:kloppmodel} with each Student t and standard Gaussian noise. The sample size ranges in all simulations for all dimensions considered here from $3p\log(p)s_0$ to $pq$. Between minimal and maximal sample size there are in each case $10$ points.
To illustrate the rate derived in Section \ref{s:asymptotics} we compute the error $\Vert \hat{B}_H - B^0 \Vert_F^2$ for different dimensions of the problem under increasing number of observations.

To compute the solution of the optimization problem \ref{eqn:huber} functions from the Matlab library \texttt{cvx} \citep{cvx} were used.

Throughout this section the error is assumed to have the following shape
\begin{equation}
\mbox{Error} = \frac{1}{pq} \left\Vert \hat{B}_H - B^0 \right\Vert_F^2= \frac{1}{pq} \sum_{i = 1}^p \sum_{j = 1}^q \left(\hat{B}_{H_{ij}} - B_{ij}^0 \right)^2.
\end{equation}
To verify the robustness of the estimator \ref{eqn:huber} and the rate of convergence that was derived in Section \ref{s:asymptotics} we use the Student t distribution with $3$ degrees of freedom. Every point in the plots corresponds to an average of $25$ simulations. The value of the tuning parameter is set to 
\begin{equation*}
\lambda = 2 \sqrt{\frac{\log(p+q)}{nq}}.
\end{equation*}
A comparison with $\lambda_\varepsilon$ from Corollary \ref{cor:usrcompletionhuber} indicates that $\lambda$ is rather small. For the settings we consider in this section we found that this value for $\lambda$ is more appropriate. As done in \cite{candes2010matrix}, for a better comparison between the error curves of our estimator and the oracle rate in Equation (\ref{eqn:sharprate}) this rate was multiplied with $1.68$ in the case of Student t distributed errors and with $1.1$ in the case of Gaussian errors.

\subsection{t-distributed and gaussian errors}
\label{ss:student}
The variance of the Student t distribution with $\nu>2$ degrees of freedom is given by
\begin{equation}
\Var \left( \varepsilon_i \right) = \frac{\nu}{\nu -2}, \ \mbox{for} \ \varepsilon_i \sim t_\nu.
\end{equation}
Figure \ref{fig:comparsionlseandrobust} shows a comparison between the Huber estimator \ref{eqn:huber} with the estimator that uses the quadratic loss in the case of Student t with $3$ degrees of freedom distributed errors. As expected, the estimator that uses the quadratic loss is not robust against the corrupted entries. On the other hand, we can see in Figure \ref{fig:huberlsegaussian} that the Huber estimator performs almost as well as the quadratic loss estimator in the case of Gaussian errors with variance $1$. In agreement with the theory, the rate of the estimator is very close to the oracle rate for sufficiently large sample sizes. The value of $\kappa$ that we used in the simulations is $1.345$. The maximal rating $\eta$ is chosen to be $\eta = 10$.

\begin{figure}[h]
\hspace{-1.5cm}
\centering
\begin{subfigure}{0.45\textwidth}
\centering
\includegraphics[scale=0.35]{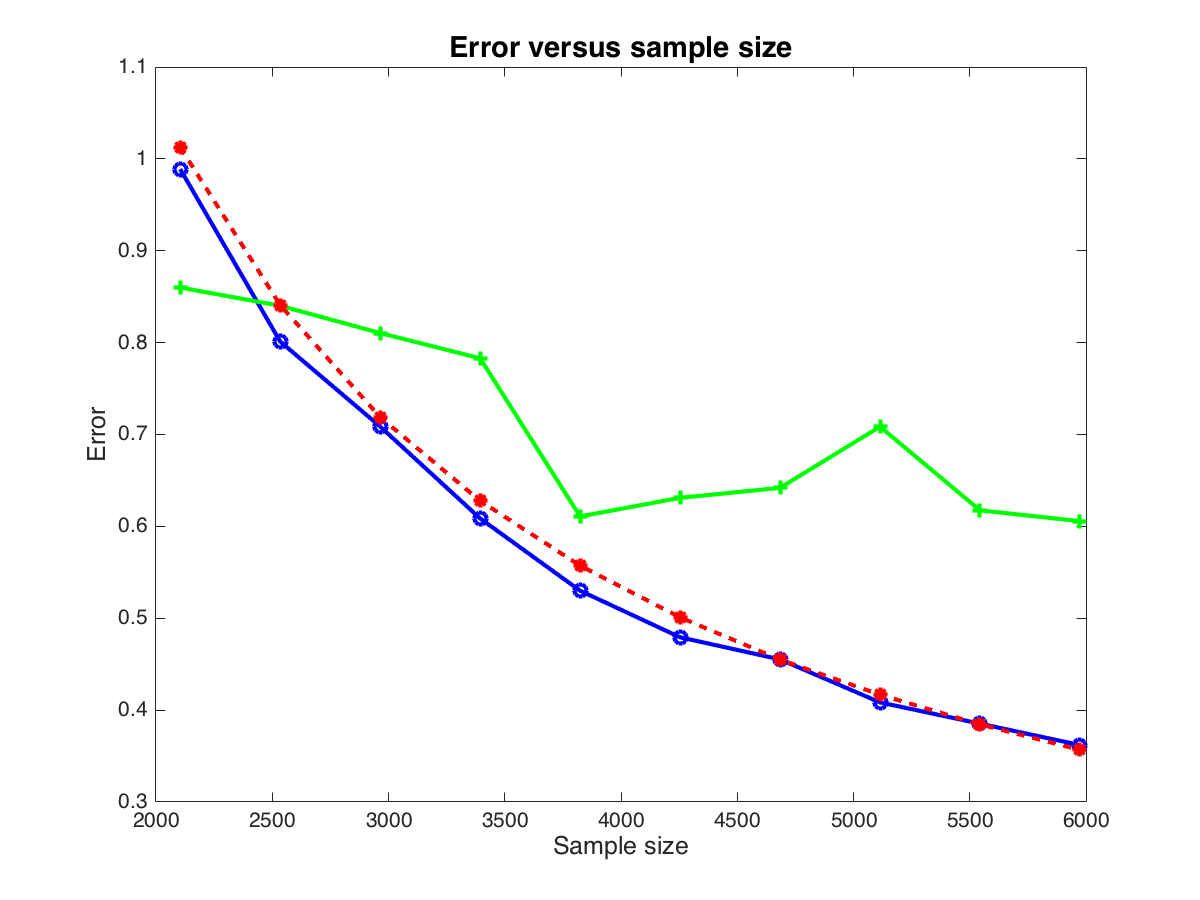}
\caption{}
\label{fig:comparsionlseandrobust}
\end{subfigure}
\hspace{0.85cm}
\begin{subfigure}{0.45\textwidth}
\centering
\includegraphics[scale=0.35]{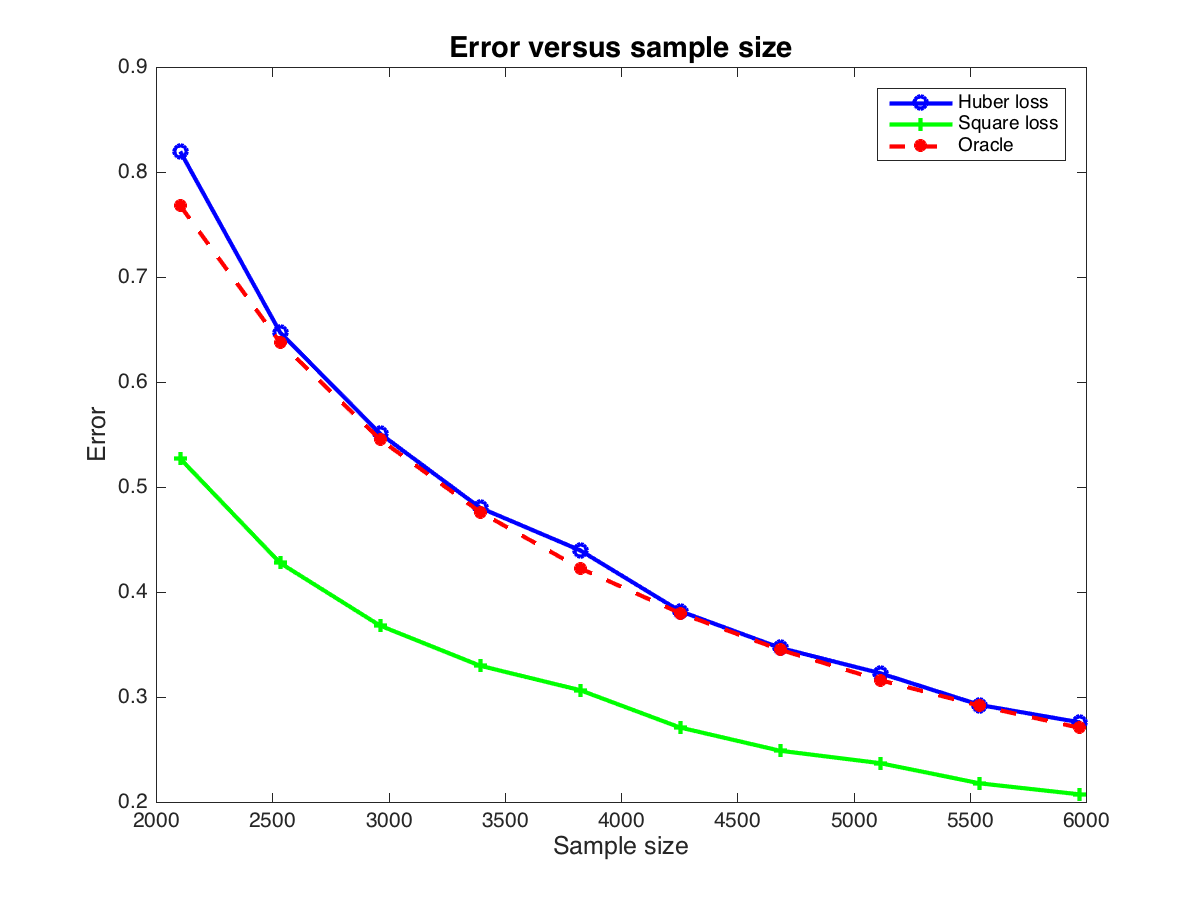}
\caption{}
\label{fig:huberlsegaussian}
\end{subfigure}
\caption{Comparison of the Huber and quadratic loss in the case of Student t distributed (left figure) and Gaussian errors (right figure). We choose $p=q=80$ and $s_0 = 2$. The dot-dashed red line corresponds to $1.68$ multiplied with the oracle bound derived in Equation \ref{eqn:sharprate} for the exact low-rank case. }
\end{figure}

\subsection{Changing the problem size}
\label{ss:problemsize}
In order to confirm/verify the theoretical results, we proceed similarly to what was done in \cite{negahban2011estimation} and \cite{negahban2012restricted} in the corresponding cases. Here, we consider three different problem sizes: $p,q \in \left\lbrace 30,50,80 \right\rbrace$. In Figure \ref{fig:threecases1} we observe that as the problem gets harder, i.e. as the dimension of the matrix increases, also the sample size needs to be larger. Figure \ref{fig:threecases2} shows that by rescaling the sample size by $n/(3ps_0\log(p))$ the rate of convergence agrees very well with the theoretical one. It is assumed that the rank of the matrices is $s_0=2$ for all cases. Every point corresponds to an average of $25$ simulations. The maximal rating $\eta$ and the tuning parameter $\kappa$ are chosen as before.

\begin{figure}[h]
\hspace{-1.5cm}
\centering
\begin{subfigure}{0.45\textwidth}
\centering
\includegraphics[scale=0.35]{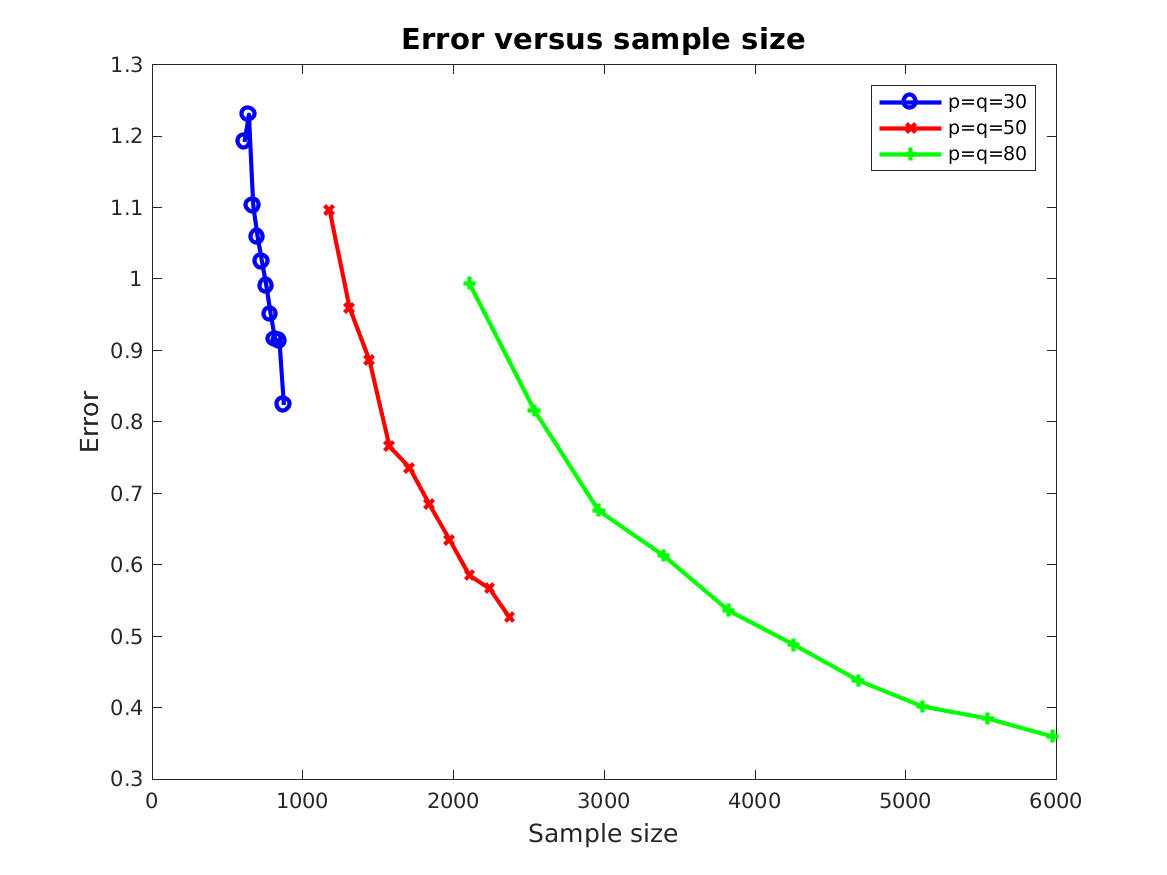}
\caption{}
\label{fig:threecases1}
\end{subfigure}
\hspace{0.85cm}
\begin{subfigure}{0.45\textwidth}
\centering
\includegraphics[scale=0.35]{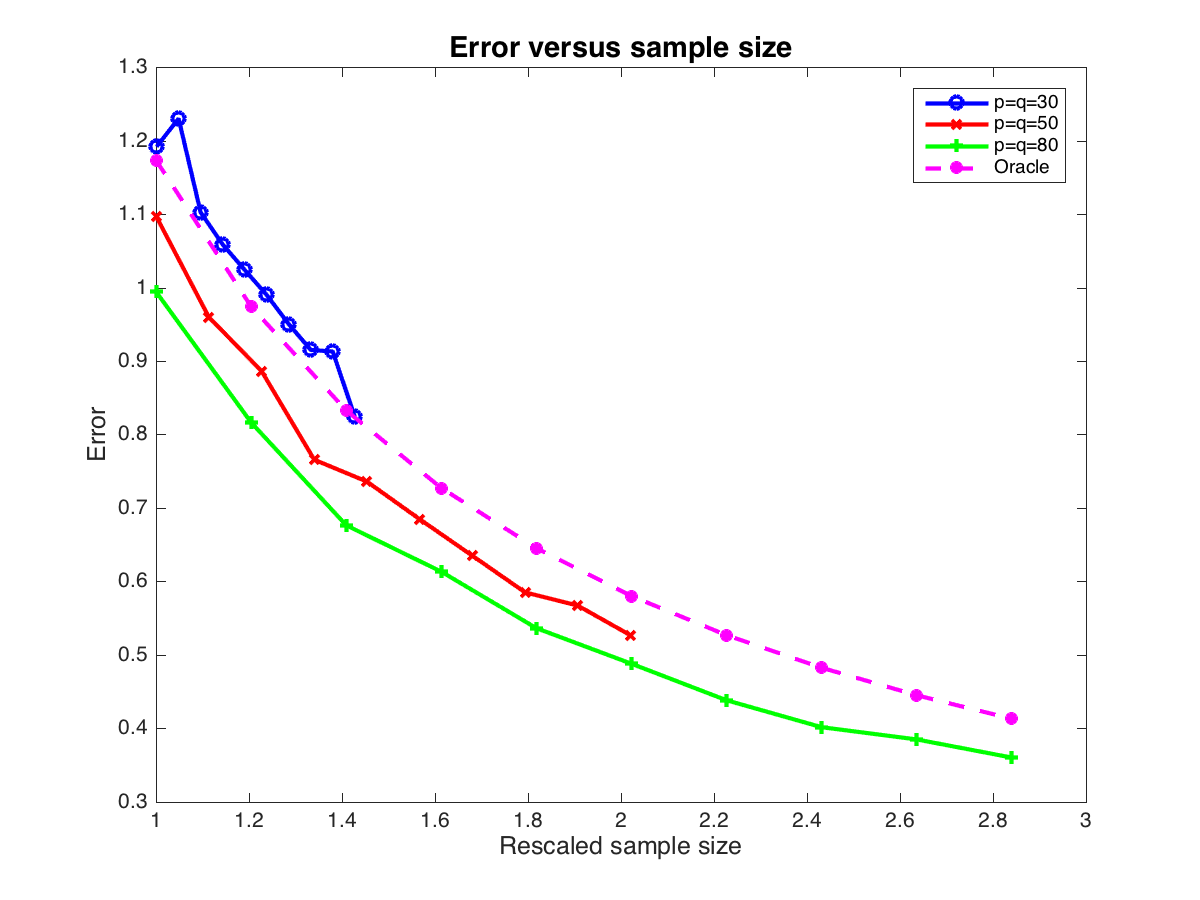}
\caption{}
\label{fig:threecases2}
\end{subfigure}
\caption{Three different problem sizes are considered: $p=q \in \left\lbrace 30,50, 80 \right\rbrace$. The rank is fixed to $s_0=2$ in both cases. In Panel (b) it can be seen that the rate of convergence corresponds approximately to the theoretical one derived in Equation \ref{eqn:sharprate}. The dot-dashed line is the oracle. It was multiplied by $1.68$ in order to fit our curves.}
\end{figure}

\subsection{Comparison with a low-rank + sparse estimator}
\label{ss:lowranksparse}
In this subsection, we compare the performance of the Huber estimator \ref{eqn:huber} with the performance of the low-rank matrix estimator proposed by \cite{klopp2014robust} \ref{eqn:kloppest}. We first compare the estimators $\hat{B}_H$ and $\hat{L}$ with the observations $Y_i$ generated according to the model \ref{eqn:traceregr} with standard Gaussian and Student t with $3$ degrees of freedom distributed errors. Equation (21) in \cite{klopp2014robust} suggests that the tuning parameters are chosen as follows:
\begin{equation*}
\lambda_1 =2 \sqrt{\frac{\log(p+q)}{nq}}, \quad \lambda_2 = 2 \frac{\log(p+q)}{n},
\end{equation*}
where $\lambda_1$ and $\lambda_2$ are the tuning parameters of the estimator \ref{eqn:kloppest}. Also in this case it has to be noticed that the tuning parameters are smaller than the theoretical values given in their paper.
\begin{figure}[h]
\hspace{-1.5cm}
\centering
\begin{subfigure}{0.45\textwidth}
\centering
\includegraphics[scale=0.35]{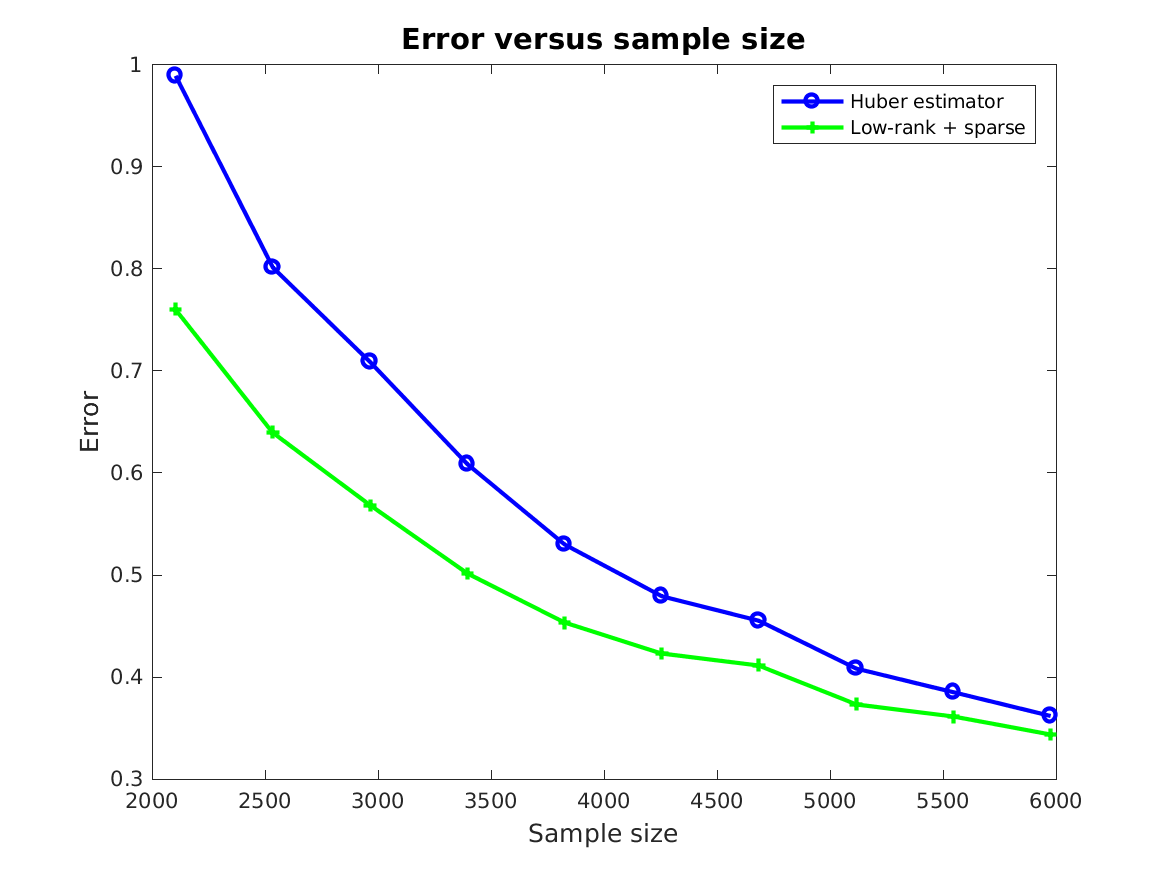}
\caption{}
\label{fig:comparestudent1}
\end{subfigure}
\hspace{0.85cm}
\begin{subfigure}{0.45\textwidth}
\centering
\includegraphics[scale=0.35]{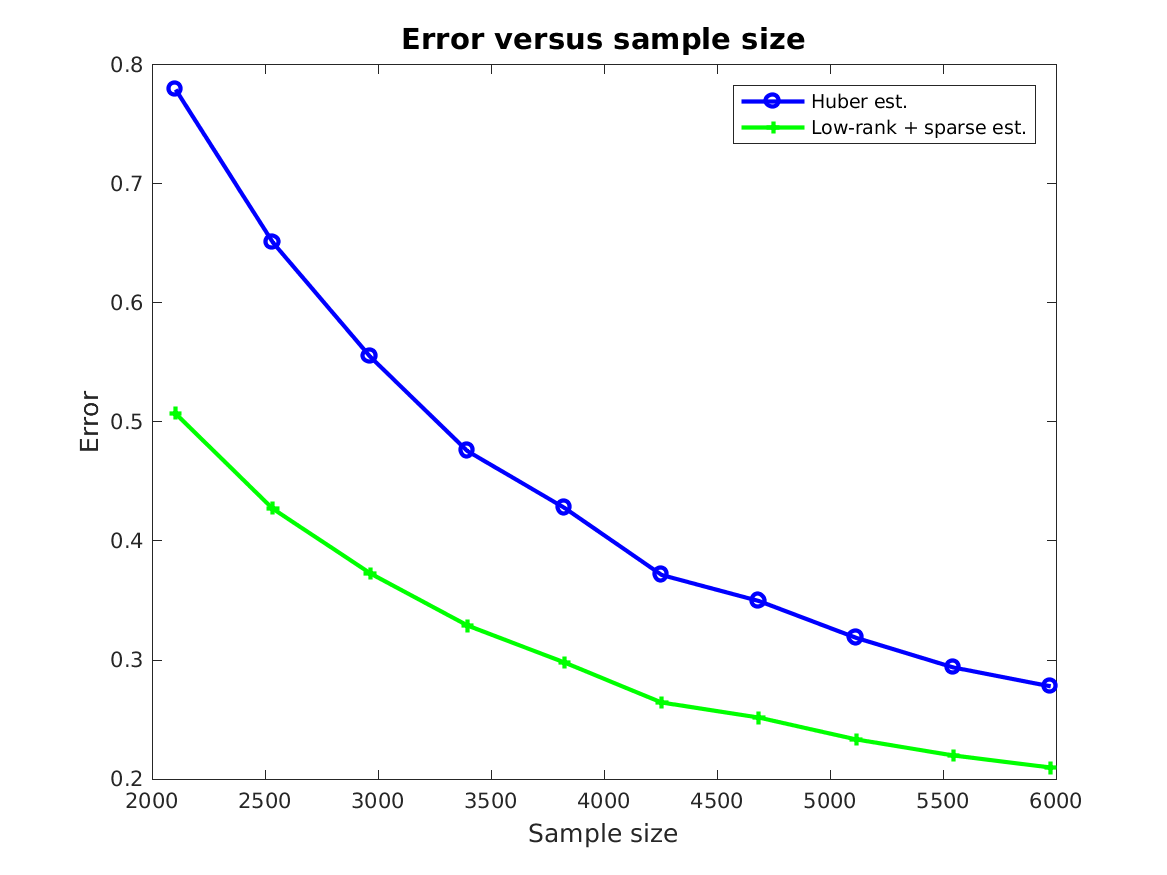}
\caption{}
\label{fig:comparegauss1}
\end{subfigure}
\caption{The left panel shows the Huber estimator \ref{eqn:huber} and the low-rank plus sparse estimator \ref{eqn:kloppest} under the model \ref{eqn:traceregr} with Student t noise with $3$ degrees of freedom. The right panel shows the same estimators under the same model with standard Gaussian noise.}
\end{figure}

In Figure \ref{fig:comparestudent1} the Huber estimator \ref{eqn:huber} is compared with the low-rank plus sparse estimator \ref{eqn:kloppest} under the model \ref{eqn:traceregr} with i.i.d. Student t noise with $3$ degrees of freedom. As expected, these estimators perform comparably well under the trace regression model \ref{eqn:traceregr}. In Figure \ref{fig:comparegauss1} the same estimators are compared under the model \ref{eqn:traceregr} with i.i.d. standard Gaussian noise. Also in this case, we see that both estimators achieve approximately the same error. These observations are not surprising since the theoretical analysis of Section \ref{s:oracle} could be carried over by adapting the (semi-)norms to the different penalization.

We now consider the model proposed in \cite{klopp2014robust} where around $5\%$ of the observed entries are taken to be only one rating. This is the case of malicious users who systematically rate only one particular movie with the same rating. We refer to Section 2.3 of \cite{klopp2014robust} for more details on this setting. In Figure \ref{fig:comparestudent2} we see that the Huber estimator outperforms the low-rank plus sparse estimator with Student t noise with $3$ degrees of freedom. This might be due to the quadratic loss function and to the choice of the tuning parameters. In Figure \ref{fig:comparegauss2} where Gaussian noise is considered we observe that both estimators perform almost equally well.

\begin{figure}[h]
\hspace{-1.5cm}
\centering
\begin{subfigure}{0.45\textwidth}
\centering
\includegraphics[scale=0.35]{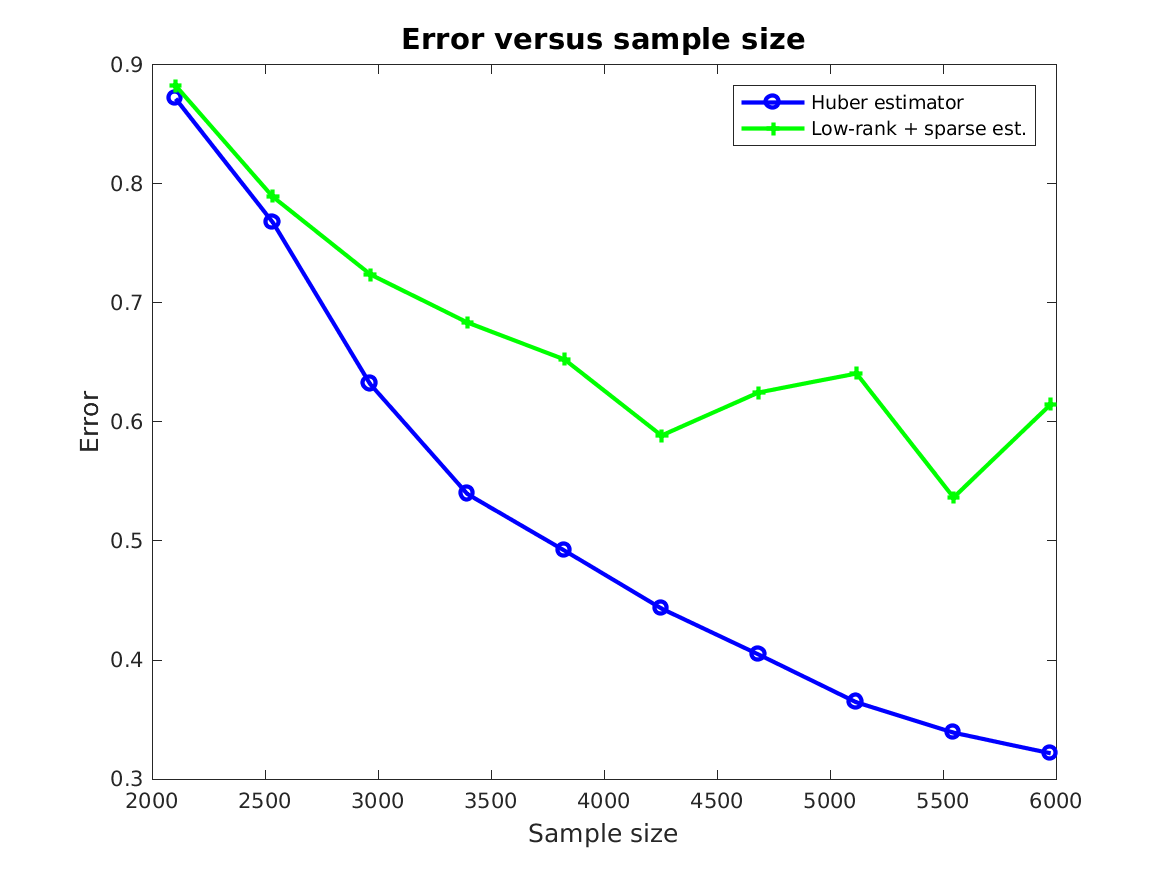}
\caption{}
\label{fig:comparestudent2}
\end{subfigure}
\hspace{0.85cm}
\begin{subfigure}{0.45\textwidth}
\centering
\includegraphics[scale=0.35]{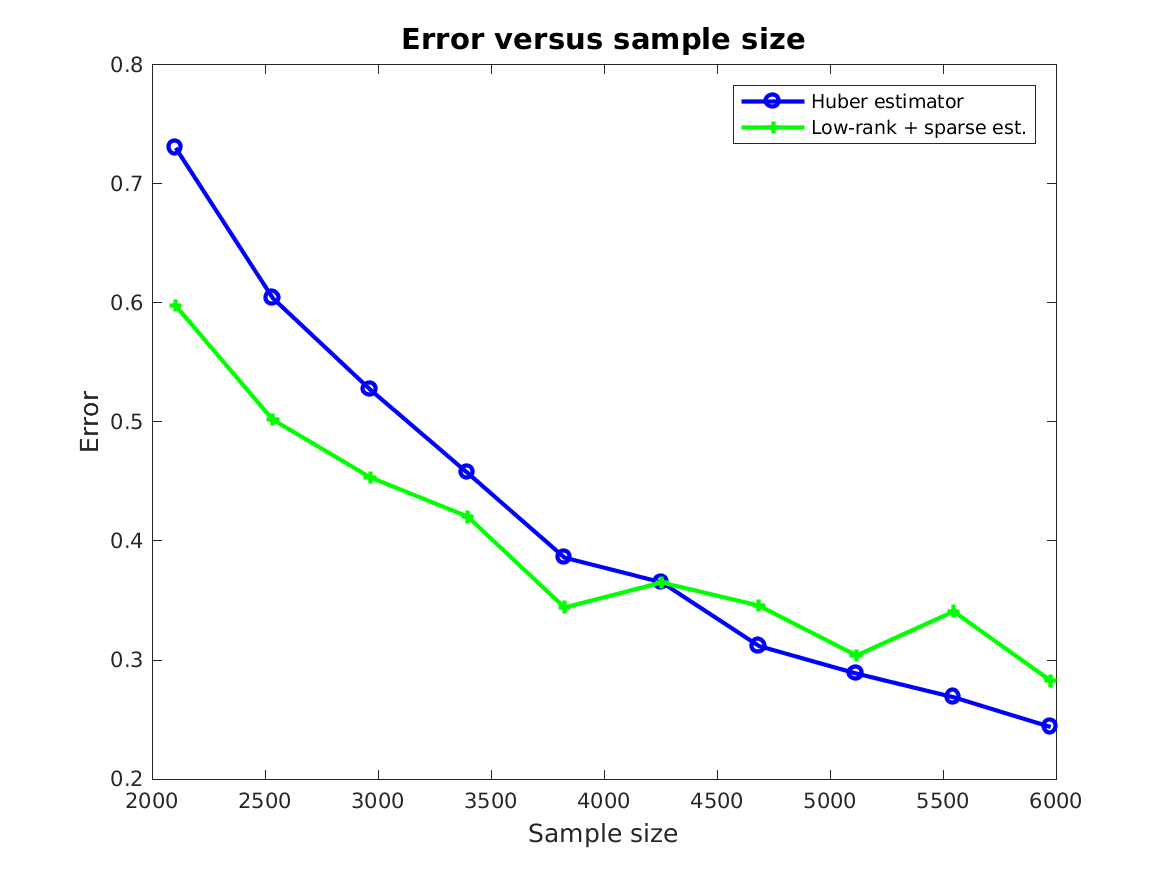}
\caption{}
\label{fig:comparegauss2}
\end{subfigure}
\caption{The left panel shows the Huber estimator \ref{eqn:huber} and the low-rank plus sparse estimator \ref{eqn:kloppest} under the model \ref{eqn:kloppmodel} with Student t noise with $3$ degrees of freedom. The right panel shows the same estimators under the same model as on the left panel but with standard Gaussian noise.}
\end{figure}
\section{Discussion}
In this paper, we have derived sharp and non-sharp oracle inequalities for two robust nuclear norm penalized estimators of the noisy matrix completion problem. The robust estimators were defined using the well-known Huber loss for which the sharp oracle inequality has been derived and the absolute value loss for which we have shown a non-sharp oracle inequality. For both types of oracle inequalities we proved a general deterministic result first and added then the part arising from the empirical process. We have also shown how to apply the oracle inequalities to the case where we only assume weak sparsity, i.e. approximately low-rank matrices. It is worth pointing out that our estimators do not require the distribution on the set of matrices (\ref{eqn:space}) to be known in contrast to e.g. \cite{Koltchinskii2011}. In our case, the distribution on the set of matrices (\ref{eqn:space}) is only needed in the theoretical analysis. The proofs of the oracle inequalities rely on the properties of the nuclear norm, and for the empirical process part on the Concentration, Symmetrization, and Contraction Theorems. A main tool in this context was also the bound on the largest singular value of a matrix with finite Orlicz norm. Our simulations, in the case of the Huber loss, showed a very good agreement with the convergence rates obtained by our theoretical analysis. We saw that the oracle rate is attained up to constants in presence of non-Gaussian noise and that the robust estimation procedure outperforms the quadratic loss function.

It is left to future research to establish a sharp oracle inequality also for the case of a non-differentiable robust loss function. The Contraction inequality used in this paper for the Huber loss requires that also the derivative of the loss is Lipschitz continuous. This is not the case for the absolute value loss. Thanks to the convexity of the loss function it might be possible to derive a sharp result also for this case.

\appendix
\section{Proofs of main results}
\label{appendix:proofmain}
\begin{proof}[Proof of Inequality \ref{lemma:triangleprop}]
Using the triangle property at $B^+$ with $B' = B^+$ we obtain
\begin{align*}
0 = \Vert B^+ \Vert_{\mbox{nuclear}} - \Vert B^+ \Vert_{\mbox{nuclear}} \leq \underbrace{\Omega^+(B^+ - B^+ )}_{=0} - \Omega^-(B^+) \\
\Rightarrow \Omega^-(B^+)=0.
\end{align*}
By the triangle property at $B^+$ with $B' = B= B^+ + B^-$ we have that
\begin{align*}
&\Vert B^+ \Vert_{\mbox{nuclear}}- \Vert B \Vert_{\mbox{nuclear}} \\
&= \Vert B^+ \Vert_{\mbox{nuclear}}- \Vert B^+ + B^- \Vert_{\mbox{nuclear}}	\\
&\leq \Omega^+ (B^- ) - \Omega^-(B^++B^-) \\
&= - \Omega^-(B^+ + B^-).
\end{align*}
By the triangle inequality it follows using $\Omega^-(B^+)=0$ that
\begin{equation*}
\Omega^- (B^+ + B^-) \geq \Omega^-(B^-)  - \Omega^- (B^+) =  \Omega^-(B^-).
\end{equation*}
Therefore, we have
\begin{equation*}
\Vert B^+ \Vert_{\mbox{nuclear}} - \Vert B^+ + B^- \Vert_{\mbox{nuclear}} \leq - \Omega^-(B^-),
\end{equation*}
and by the triangle inequality
\begin{equation*}
\Vert B^+ \Vert_{\mbox{nuclear}} - \Vert B^+ + B^- \Vert_{\mbox{nuclear}} \geq - \Vert B^- \Vert_{\mbox{nuclear}},
\end{equation*}
which gives
\begin{equation*}
\Omega^-(B^-) \leq \Vert B^- \Vert_{\mbox{nuclear}}.
\end{equation*}
For an arbitrary $B$ we have again by the triangle inequality
\begin{equation*}
\Vert B \Vert_{\mbox{nuclear}} - \Vert B' \Vert_{\mbox{nuclear}} \leq \Vert  B^+ \Vert_{\mbox{nuclear}}+  \Vert B^- \Vert_{\mbox{nuclear}} - \Vert B' \Vert_{\mbox{nuclear}} .
\end{equation*}
Applying the triangle property at $B^+$ we find that
\begin{equation*}
\Vert B \Vert_{\mbox{nuclear}} - \Vert B' \Vert_{\mbox{nuclear}} \leq \Omega^+(B^+ - B') - \Omega^-(B') + \Vert B^-\Vert_{\mbox{nuclear}}.
\end{equation*}
Apply now twice the triangle inequality (first inequality) to find that
\begin{align*}
\Vert B \Vert_{\mbox{nuclear}}  - \Vert B' \Vert_{\mbox{nuclear}} &\leq \Omega^+(B - B') + \Omega^+(B^-) - \Omega^-(B - B') \\ 
&\phantom{\leq}+ \Omega^-(B) + \Vert B^- \Vert_{\mbox{nuclear}} \\
&\leq \Omega^+ ( B - B') - \Omega^-(B - B') \\
&\phantom{\leq}+ 2 \Vert B^- \Vert_{\mbox{nuclear}}, 
\end{align*}
where it was used that $\Omega (B^-) = 0$ and that $\Omega^-(B) \leq \Omega^- (B^-) \leq \Vert B^-\Vert_{\mbox{nuclear}}$.
\end{proof}

\begin{proof}[Proof of Lemma \ref{lemma:twopoint}]
Let $B \in \mathcal{B}$. Define for $0<t<1 $
\begin{equation*}
\tilde{B}_t := (1-t)\hat{B} + tB.
\end{equation*}
Since $\mathcal{B}$ is convex we have that $\tilde{B}_t \in \mathcal{B}$ for all $0<t<1 $. Since $\hat{B}$ is the minimizer of the objective function and by the convexity of the objective function we have
\begin{align*}
R_n(\hat{B}) + \lambda \Vert \hat{B} \Vert_{\mbox{nuclear}} &\leq R_n (\tilde{B}_t) + \lambda \Vert \tilde{B}_t \Vert_{\mbox{nuclear}} \\
&\leq R_n (\tilde{B}_t) + (1-t) \lambda \Vert \hat{B} \Vert_{\mbox{nuclear}} + t \lambda \Vert B \Vert_{\mbox{nuclear}}.
\end{align*}
Finally, we can conclude that
\begin{equation*}
\frac{R_n(\hat{B}) - R_n (\tilde{B}_t)}{t} \leq \lambda \Vert B \Vert_{\mbox{nuclear}} - \lambda \Vert \hat{B} \Vert_{\mbox{nuclear}}.
\end{equation*}
Letting $t \rightarrow 0$ the claim follows.
\end{proof}

\begin{proof}[Proof of Theorem \ref{thm:sharp}]
The first order Taylor expansion of $R$ at $\hat{B}$ is given by
\begin{equation}
R(B) = R(\hat{B}) + \trace \left( \dot{R}(\hat{B})^T (B -\hat{B} )  \right) + Rem(\hat{B}, B).
\end{equation}
Then it follows that
\begin{equation}
R(\hat{B} ) - R(B) + Rem(\hat{B}, B) = - \trace \left( \dot{R} (\hat{B})^T (B - \hat{B}) \right).
\end{equation}
\textit{Case 1}

If
\begin{align}
&\trace \left( \dot{R}(\hat{B} )^T (B- \hat{B} ) \right)  \nonumber \\
&\geq \delta \underline{\lambda} \Omega^+(\hat{B} - B) + \delta \underline{\lambda} \Omega^- (\hat{B} - B)  - 2 \lambda \Vert B^- \Vert_{\mbox{nuclear}} -\lambda_*,
\end{align}
then by the two-point-margin condition \ref{ass:2margin} we find that
\begin{equation}
R(B) - R(\hat{B}) \geq \trace \left( \dot{R}(\hat{B})^T (B - \hat{B}) \right) + G(\Vert B-\hat{B} \Vert_F).
\end{equation}
Which implies that
\begin{align*}
R(B) - R(\hat{B}) &\geq \delta \underline{\lambda} \Omega^+(\hat{B} - B) + \delta \underline{\lambda} \Omega^- (\hat{B} - B)  - 2 \lambda \Vert B^- \Vert_{\mbox{nuclear}} \\
&\phantom{\geq}- \lambda_*+ \underbrace{G(\Vert B-\hat{B} \Vert_F )}_{\geq 0} \\
&\geq \delta \underline{\lambda} \Omega^+(\hat{B} - B) + \delta \underline{\lambda} \Omega^- (\hat{B} - B)  - 2 \lambda \Vert B^-\Vert_{\mbox{nuclear}} -  \lambda_*.
\end{align*}
\textit{Case 2}

Assume in the following that
\begin{align}
\label{eqn:case2}
&\trace \left( \dot{R}(\hat{B} )^T (B - \hat{B} ) \right) \nonumber \\
&\leq \delta \underline{\lambda} \Omega^+(\hat{B} - B) + \delta \underline{\lambda} \Omega^- (\hat{B} - B)  - 2 \lambda \Vert B^- \Vert_{\mbox{nuclear}} - \lambda_*,
\end{align}
By the two-point inequality (Lemma \ref{lemma:twopoint}) we have that
\begin{equation}
- \trace \left( \dot{R}_n(\hat{B})^T (B - \hat{B}) \right) \leq \lambda \Vert B \Vert_{\mbox{nuclear}} - \lambda \Vert  \hat{B} \Vert_{\mbox{nuclear}},
\end{equation}
which implies that
\begin{equation}
0 \leq \trace \left( \dot{R}_n(\hat{B})^T (B - \hat{B}) \right) + \lambda \Vert B \Vert_{\mbox{nuclear}} - \lambda \Vert \hat{B} \Vert_{\mbox{nuclear}}.
\end{equation}
Hence,
\begin{align*}
&-\trace \left( \dot{R}(\hat{B})^T (B - \hat{B}) \right)+ \delta \underline{\lambda} \Omega^+(\hat{B} - B) + \delta \underline{\lambda} \Omega^- (\hat{B} - B)\\
&\leq \trace \left( (\dot{R}_n (\hat{B}) - \dot{R} (\hat{B}))^T (B - \hat{B}) \right)  +\delta \underline{\lambda} \Omega^+(\hat{B} - B) + \delta \underline{\lambda} \Omega^- (\hat{B} - B) \\
&\phantom{\leq} + \lambda \Vert B \Vert_{\mbox{nuclear}} - \lambda \Vert \hat{B} \Vert_{\mbox{nuclear}} \\
&\leq  \lambda_\varepsilon \underline{\Omega}(\hat{B}-B) + \lambda_*+ \delta \underline{\lambda} \Omega^+(\hat{B} - B) + \delta \underline{\lambda} \Omega^- (\hat{B} - B) \\
&\phantom{\leq}+ \lambda \Vert B \Vert_{\mbox{nuclear}} - \lambda \Vert \hat{B} \Vert_{\mbox{nuclear}} \\
&\leq \lambda_\varepsilon \Omega^+(\hat{B}-B) +\lambda_\varepsilon  \Omega^- (\hat{B} - B) + \lambda_*+ \delta \underline{\lambda} \Omega^+(\hat{B} - B) \\
&\phantom{\leq} + \delta \underline{\lambda} \Omega^- (\hat{B} - B) + \lambda \Omega^+(\hat{B} - B) - \lambda \Omega^-(\hat{B} - B) + 2 \lambda \Vert B^- \Vert_{\mbox{nuclear}} \\
&= \overline{\lambda} \Omega^+ (\hat{B} - B) - (1 -\delta)\underline{\lambda} \Omega^-(\hat{B} - B) + 2\lambda \Vert B^- \Vert_{\mbox{nuclear}} +\lambda_*.
\end{align*}
Therefore, by Equation \ref{eqn:case2}
\begin{equation*}
\Omega^-(\hat{B} - B) \leq \frac{\overline{\lambda}}{(1- \delta)\underline{\lambda}} \Omega^+(\hat{B} - B).
\end{equation*}
We then have by the convex conjugate inequality
\begin{align*}
 \Omega^+(\hat{B} -B) &\leq \Vert \hat{B} -B \Vert_F 3 \sqrt{s} \\
&\leq H\left(3 \sqrt{s} \right) + G(\Vert \hat{B} - B \Vert_F ).
\end{align*}
Which implies that
\begin{align*}
&-\trace \left( \dot{R} (\hat{B})^T (B - \hat{B}) \right) +  \underline{\lambda} \Omega^-(\hat{B} - B) + \delta \underline{\lambda} \Omega^+(\hat{B} - B)\\
&= R(\hat{B})-R(B) + Rem(\hat{B}, B) + \underline{\lambda}  \Omega^- (\hat{B} - B) + \delta \underline{\lambda} \Omega^+ (\hat{B} - B)  \\
&\leq H \left( \overline{\lambda} 3 \sqrt{s} \right) + G(\Vert \hat{B} - B \Vert_F) + 2\lambda \Vert B^- \Vert_{\mbox{nuclear}} + \lambda_*\\
&\leq H\left(\overline{\lambda}3 \sqrt{s} \right) + Rem(\hat{B}, B) + 2\lambda \Vert B^- \Vert_{\mbox{nuclear}} + \lambda_*.
\end{align*}
\end{proof}

\begin{proof}[Proof of Theorem \ref{thm:nonsharp}]
	We start the proof with the following inequality using the fact that $\hat{B}$ is the minimizer of the objective function.
	\begin{equation}
	R_n(\hat{B}) + \lambda \Vert \hat{B} \Vert_{\mbox{nuclear}} \leq R_n (B) + \lambda \left\Vert B \right\Vert_{\mbox{nuclear}}
	\end{equation}
	Then, by adding and subtracting $R(\hat{B})$ on the left hand side and $R(B)$ on the right hand side we obtain
	\begin{align*}
	R(\hat{B}) - R(B) &\leq - \left[ (R_n(\hat{B}) - R(\hat{B}) - ( R_n (B)- R(B))\right]  \\
	&\phantom{\leq}+ \lambda \Vert B \Vert_{\mbox{nuclear}} - \lambda \Vert \hat{B} \Vert_{\mbox{nuclear}}.
	\end{align*}
	Applying Assumption \ref{eqn:assbound}, the definition of $\underline{\Omega}$, and Lemma \ref{lemma:triangleprop} we obtain
	\begin{align*}
	&R(\hat{B}) - R(B) \leq \lambda_\varepsilon \underline{\Omega} (\hat{B} - B) + \lambda_* + \lambda \Vert B \Vert_{\mbox{nuclear}} - \lambda \Vert \hat{B} \Vert_{\mbox{nuclear}} \\
	&\leq \lambda_\varepsilon \Omega^+(\hat{B} - B) + \lambda_\varepsilon \Omega^-(\hat{B} - B) + \lambda_*\\
	&\phantom{\leq} + \lambda \Omega^+ (\hat{B} - B) - \lambda \Omega^-(\hat{B} - B) + 2 \lambda \Vert B^- \Vert_{\mbox{nuclear}} \\
	&= (\lambda_\varepsilon + \lambda) \Omega^+ (\hat{B} - B)  - (\lambda - \lambda_\varepsilon) \Omega^-(\hat{B} - B) + \lambda_* + 2 \lambda \Vert B^- \Vert_{\mbox{nuclear}}.
	\end{align*}
	Since later on we apply Assumption \ref{ass:1margin} we subtract on both sides of the above inequality $R(B^0)$.
	\begin{align}
	\label{eqn:basic1}
	&R(\hat{B}) - R(B^0) + \underline{\lambda} \Omega^- (\hat{B} - B) \nonumber \\
	&\leq R(B) - R(B^0) +\overline{\lambda} \Omega^+ (\hat{B} - B) + \lambda_*+ 2 \lambda \Vert B^- \Vert_{\mbox{nuclear}}.
	\end{align}
	It is then useful to make the following case distinction that allows us to obtain an upper bound for the estimation error.
	\\
	\textit{Case 1}
	\\
	If $\overline{\lambda} \Omega^+ (\hat{B} -B)\leq \frac{(1-\delta)}{\delta} \left( \lambda_*+ R(B) - R(B^0) + 2 \lambda \Vert B^- \Vert_{\mbox{nuclear}} \right)$, then
	\begin{align*}
	 \delta \overline{\lambda} \Omega^+ (\hat{B}- B) &\leq (1-\delta) \left(\lambda_*+ R(B) - R(B^0) + 2 \lambda \Vert B^- \Vert_{\mbox{nuclear}}\right) \\
	 &\leq \lambda_*+ R(B) - R(B^0) + 2 \lambda \Vert B^- \Vert_{\mbox{nuclear}} .
	\end{align*}
	By multiplying Equaiton \ref{eqn:basic1} on both sides with $\delta$ we arrive at
	\begin{align*}
	\delta \underline{\lambda} \Omega^- (\hat{B} - B) \leq \lambda_* + R(B) - R(B^0) + 2\lambda \Vert B^- \Vert_{\mbox{nuclear}}.
	\end{align*}
	Therefore,
	\begin{align}
	&\delta (\overline{\lambda} \Omega^+ (\hat{B}- B) + \underline{\lambda} \Omega^- (\hat{B}- B)) \nonumber \\ 
	&\leq 2\lambda_*+ 2 (R(B) - R(B^0)) + 4 \lambda \Vert B^- \Vert_{\mbox{nuclear}}.
	\end{align}
	And since 
	\begin{equation*}
	\underline{\lambda} < \overline{\lambda},
	\end{equation*}
	we conclude that
	\begin{equation*}
	\delta \underline{\lambda} (\Omega^+ + \Omega^- ) (\hat{B} - B) \leq 2 \lambda_* + 2(R(B)-R(B^0)) + 4 \lambda \Vert B^- \Vert_{\mbox{nuclear}}.
	\end{equation*}
	\textit{Case 2}
	\\
	If $\overline{\lambda} \Omega^+(\hat{B} - B) \geq \frac{(1-\delta)}{\delta} \left(\lambda_*+ R(B) - R(B^0) + 2 \lambda \Vert B^- \Vert_{\mbox{nuclear}} \right) $, then
	\begin{align*}
	R(\hat{B}) - R(B^0) + \underline{\lambda} \Omega^- (\hat{B} - B) &\leq \overline{\lambda} \Omega^+ (\hat{B} - B) + \overline{\lambda}\Omega^+ (\hat{B} - B) \frac{\delta}{(1-\delta)}.
	\end{align*}
	This implies
	\begin{align*}
	(1-\delta)  \left[ R(\hat{B}) - R(B^0) \right] + (1-\delta) \underline{\lambda} \Omega^- (\hat{B} - B) 	\leq \overline{\lambda} \Omega^+ (\hat{B} - B).
	\end{align*}
	And finally we conclude that 
	\begin{equation*}
	\Omega^- (\hat{B} - B) \leq \frac{\overline{\lambda}}{(1-\delta) \underline{\lambda}} \Omega^+(\hat{B} - B).
	\end{equation*}
	We then obtain using the definition of $\Omega^+$ in Lemma \ref{def:triangle}
	\begin{align*}
	\Omega^+(\hat{B}-B) &\leq \Vert \hat{B} - B \Vert_F 3 \sqrt{s} \\
	&\leq (\Vert \hat{B} - B^0 \Vert_F + \Vert B - B^0 \Vert_F ) 3 \sqrt{s} \\
	&\leq G(\Vert \hat{B} - B^0  \Vert_F) + G(\Vert B - B^0 \Vert_F)  + 2 H(3 \sqrt{s} ). 
	\end{align*}
	Invoking the convex conjugate inequality and Assumption \ref{ass:1margin} we get
	\begin{align*}
	&\delta \overline{\lambda} \Omega^+ (\hat{B} - B) + \delta \underline{\lambda} \Omega^- (\hat{ B} - B) \\
	&\leq 2H(\overline{\lambda} (1+\delta) 3 \sqrt{s}) + R(B) - R(B^0) + (R(B) - R(B^0)) \\
	&\phantom{\leq} + \lambda_* + 2 \lambda \Vert B^- \Vert_{\mbox{nuclear}} \\
	& \leq 2H(\overline{\lambda} (1+\delta)3\sqrt{s}) +2 (R(B) - R(B^0)) \\
	&\phantom{\leq}+ \lambda_*+ 2 \lambda \Vert B^- \Vert_{\mbox{nuclear}}.
	\end{align*}
	Combining the two cases we have for the estimation error
	\begin{align*}
	&\delta \underline{\lambda} (\Omega^+ + \Omega^- ) (\hat{B} - B) \\
	&\leq 2H(\overline{\lambda} (1+\delta)3\sqrt{s}) + 2\lambda_* \\
	&\phantom{\leq}+2(R(B) - R(B^0)) + 4 \lambda \Vert B^- \Vert_{\mbox{nuclear}}.
	\end{align*}
	and for the second claim we conclude that
\begin{align*}
	R(\hat{B}) - R(B) &\leq \overline{\lambda} \Omega^+ (\hat{B} - B) + \lambda_* + 2 \lambda \Vert B^- \Vert_{\mbox{nuclear}} \\
	&\leq \frac{1}{\delta} \left[ 2H(\overline{\lambda} (1+\delta) 3\sqrt{s}) + \lambda_* + 2 (R(B)-R(B^0)) \right. \\
	&\left. \phantom{\leq}+ 2 \lambda \Vert B^- \Vert_{\mbox{nuclear}} \right] + \lambda_* + 2 \lambda \Vert B^- \Vert_{\mbox{nuclear}}.
\end{align*}
\end{proof}
\begin{proof}[Proof of Lemma \ref{lemma:2margin}]
		The theoretical risk function arising from the Huber loss is given by
		\begin{equation}
		R(B) = \frac{1}{n} \sum_{i=1}^n \mathbb{E}_{X_i} \left[ \mathbb{E} \left[ \rho_H \left( Y_i - \trace(X_i B) \right) \vert X_i \right] \right].
		\end{equation}
		Suppose that $X_i$ has its only $1$ at entry $(k,j)$. Then $XB = (B)_{jk}$. Define	
		\begin{align*}
		r(x,B) &:= \mathbb{E} \left[ \rho_H \left( Y_i - \trace \left(X_i B \right) \right) \vert X_i = x \right] \\
		&\phantom{:}= \mathbb{E} \left[ \rho_H \left( Y_i - B_{jk} \right) \right]
		\end{align*}
		We notice that $\dot{r}(x,B) = \frac{d r(x,B)}{dB_{jk}}= \mathbb{E} \left[ \frac{d\rho_H(Y_i - B_{jk})}{dB_{jk}}  \right]$. The derivative with respect to $B_{jk}$ of $\rho_H(Y_i - B_{jk})$ is given by
		\begin{equation*}
		\dot{\rho}_H(Y_i - B_{jk}) := \left\lbrace \begin{array}{ll}
		-2(Y_i - B_{jk}), & \mbox{if} \ \left\vert Y_i - B_{jk}\right\vert \leq \kappa \\
		-2\kappa, & \mbox{if} \  Y_i - B_{jk}  > \kappa \\
		2\kappa, & \mbox{if} \ Y_i - B_{jk} < - \kappa.
		\end{array} \right.
		\end{equation*}
		Then,
		\begin{align*}
		\dot{r}(x,B) &= - 2 \int_{B_{jk} - \kappa}^{B_{jk} + \kappa} (y - B_{jk}) dF(y) - 2\kappa \int_{B_{jk} + \kappa}^\infty dF(y)  + 2 \kappa \int_{-\infty}^{B_{jk} - \kappa } dF(y)\\
		&=- 2 \int_{B_{jk}-\kappa}^{B_{jk} + \kappa} y dF(y) + 2 B_{jk} \int_{B_{jk}-\kappa}^{B_{jk}+\kappa} dF(y) - 2\kappa \left[1 - F(\kappa+ B_{jk})\right] \\
		&\phantom{=}+ 2 \kappa F(B_{jk}-\kappa) \\
		&= -2 (B_{jk} + \kappa ) F(B_{jk} + \kappa) + 2(B_{jk} -\kappa) F(B_{jk}-\kappa) \\
		&\phantom{=} + 2 \int_{B_{jk} - \kappa}^{B_{jk}+\kappa} F(y) dy + 2B_{jk} \left[F(B_{jk} + \kappa) - F(B_{jk} - \kappa )\right] - 2 \kappa \\
		&\phantom{=} + 2 \kappa F(\kappa + B_{jk}) + 2 \kappa F(B_{jk} - \kappa).\\
		&=2 \int_{B_{jk} - \kappa}^{B_{jk} + \kappa} F(y) dy - 2\kappa.
		\end{align*}
		The second derivative of $r(x,B)$ with respect to $B_{jk}$ is then given by
		\begin{equation*}
		\ddot{r} (x,B) = 2 [F(B_{jk} + \kappa) - F(B_{jk} - \kappa)].
		\end{equation*}
		Therefore, the Taylor expansion around $B'$ is given by
		\begin{align*}
		r(x,B) = r(x,B') + \dot{r}(x,B') (B_{jk} - B_{jk}') + \frac{\ddot{r}(x,\tilde{B})}{2} (B_{jk} - B_{jk}')^2,
		\end{align*}
		where $\tilde{B} \in \mathcal{B}$ is an intermediate point.
		
		We can see that Assumption \ref{ass:2margin} holds with $G(u) = u^2/(2C_1^2 pq)$. 
\end{proof}

\begin{proof}[Proof of Lemma \ref{lemma:1margin}]
	For the (theoretical) risk function $R$ arising from the absolute value loss we have
	\begin{align}
	R(B) &= \mathbb{E} \left[ R_n (B) \right] \\
	&= \frac{1}{n} \sum_{i=1}^n \mathbb{E} \left[ \left\vert Y_i - \trace \left(X_i B \right) \right\vert \right]. \\
	\end{align}
	Using the tower property of the conditional expectation we obtain
	\begin{equation}
	R(B) = \frac{1}{n} \sum_{i=1}^n \mathbb{E}_{X_i} \left[ \mathbb{E} \left[ \left\vert Y_i - \trace(X_i B) \right\vert \vert X_i \right] \right].
	\end{equation}
	Suppose that $X_i$ has its only $1$ at entry $(k,j)$. Then $XB = (B)_{jk}$. Define
	\begin{align*}
	r(x,B) &:= \mathbb{E} \left[ \left\vert Y_i - \trace \left(X_i B \right) \right\vert \vert X_i = x \right] \\
	&\phantom{:}= \mathbb{E} \left[ \left\vert Y_i -B_{jk} \right\vert \right] \\
	&\phantom{:}= \int_{y \geq B_{jk}} \left( y - B_{jk} \right) dF(y) + \int_{y < B_{jk}} \left( B_{jk} - y \right) dF(y) \\
	&\phantom{:}= \int_{y \geq B_{jk}} \left( y - B_{jk} \right) dF(y) + \int_{-\infty}^\infty \left( B_{jk} - y \right) dF(y)\\
	&\phantom{:=} - \int_{B_{jk}}^\infty \left( B_{jk} - y \right) dF(y) \\
	&\phantom{:}= 2 \int_{B_{jk}}^\infty \left( y - B_{jk} \right) dF(y) + \int_{-\infty}^\infty	\left( B_{jk} - y \right) dF(y) \\
	&\phantom{:}= 2 \int_{B_{jk}}^\infty \left( y - B_{jk} \right) dF(y)  + B_{jk} \underbrace{\int_{-\infty}^\infty dF(y)}_{= 1} - \int_{-\infty}^\infty y dF(y) \\
	&\phantom{:}= 2\int_{B_{jk}}^\infty \left( 1 - F(y) \right) dy + B_{jk} - \int_{-\infty }^\infty y dF(y).
	\end{align*}
	
	The Taylor expansion of $r(x,B)$ around $B^0$, assuming that $B^0$ minimizes $r$, is given by
	\begin{align*}
	r(x,B) &= r(x,B^0) + \dot{r} (x, B^0) \left(B_{jk} - B^0_{jk} \right) + \frac{\ddot{r}(x,\tilde{B})}{2} \left( B_{jk} - B_{jk}^0 \right)^2 \\
	&= r(x,B^0) + f(\tilde{B}_{jk} ) \left(B_{jk} - B_{jk}^0 \right)^2,
	\end{align*}
	where $\tilde{B} \in \mathcal{B}$ is an intermediate point.
	\begin{equation*}
	r(x,B) - r(x,B^0) \geq  \frac{1}{C_2^2} \left(B_{jk} - B_{jk}^0 \right)^2
	\end{equation*}
	which means that the one point margin Condition \ref{ass:1margin} is satisfied with $G(u) = u^2/(2C_2^2pq)$. 
\end{proof}



\section{Supplemental Material}
\label{s:supplement}
This supplemental material contains an application to real data sets, the proofs of the lemmas in Section \ref{s:nuclear} of the main text, and a section on the bound of the empirical process part of the estimation problem.
\subsection{Example with real data}
In Section 5 we have shown several synthetic data examples. The convex optimization problems there were solved using the semidefinite programming (SDP) toolbox \cite{cvx}. These algorithms work very well for comparably low-dimensional optimization problems. When real datasets are considered, due to the much larger problem sizes different algorithms are needed. To solve the optimization problem with real data a proximal gradient algorithm is used. The algorithm is given in pseudocode.

We define $F_1$ to be the empirical risk for the Huber loss
$$F_1(B) = \frac{1}{n} \sum_{i = 1}^n \rho_H(Y_i - \trace(X_i B)).$$

We define $F_2$ to be the empirical risk for the quadratic loss
$$F_2(B) = \frac{1}{n} \sum_{i = 1}^n (Y_i - \trace(X_i B))^2.$$

The gradient of $F_1$ is given by
$$\triangledown F_1(B) = -\frac{1}{n} \sum_{i = 1}^n \dot{\rho}_H(Y_i - \trace(X_i B))X_i^T, $$
where $\dot{\rho}_H(Y_i - \trace(X_i B))$ is given in the proof of Lemma 2.1 in the main paper.

The gradient of $F_2$ is given by

$$\triangledown F_2(B) = -\frac{2}{n} \sum_{i = 1}^n (Y_i - \trace(X_i B)) X_i^T.$$

The proximity operator proxnuc for $W \in \mathbb{R}^{p \times q}$ and $\gamma > 0$ is defined as
\begin{equation*}
\text{proxnuc}_\gamma (W)	 = \underset{B \in \mathbb{R}^{p \times q} }{\arg \min} \ \gamma \Vert B \Vert_{\text{nuclear}} + \frac{1}{2} \Vert B - W \Vert_F^2.
\end{equation*}

For the nuclear norm the proximity operator has a closed form: let $W = U \text{diag}(\sigma_1, \dots, \sigma_{\min(p,q)}) V'$ be the singular value decomposition of $W$, then
\begin{equation*}
\text{proxnuc}_\gamma (W) = U \mathcal{S}_\gamma \left((\sigma_1, \dots, \sigma_{\min(p,q)}) \right) V',
\end{equation*}
where for $x \in \mathbb{R}$ the operator $\mathcal{S}_\gamma(x)$ applied elementwise is given by
\begin{equation*}
\mathcal{S}_\gamma(x) = \left\lbrace \begin{array}{cc}
x - \gamma, &\text{if} \ x > \gamma \\
0, &\text{if} \ x = \gamma \\
x + \gamma, &\text{if} \ x < \gamma.
\end{array} \right.
\end{equation*}

It is known that the solution of the optimization problem $\hat{B}_H$ satisfies the following fixed point equation

\begin{equation*}
\hat{B}_H = \text{proxnuc}_\gamma \left(\hat{B}_H - \gamma \triangledown F \right).
\end{equation*}
The same holds also for the quadratic loss function. To compute the proximal operator in Algorithm \ref{Algorithm 1} the function \linebreak \texttt{prox\underline{\phantom{a}}nuclearnorm} from the Matlab toolbox \cite{unlocbox} was used. The algorithm is a Nesterov-type Accelerated Proximal Gradient algorithm. We refer to Section 4.3 in \cite{parikh2014proximal} and references therein. In particular, the present form of Algorithm \ref{Algorithm 1} goes back to \cite{beck2009gradient}. The Huber constant is chosen to be $\kappa = 2$ in all the examples. Algorithm \ref{Algorithm 1} is applied to the Huber loss ($i = 1$) and to the quadratic loss ($i = 2$). The tuning parameter for the quadratic loss case is smaller than for the Huber loss.

\begin{algorithm} 

\caption{HuberQuadProx} 
\label{Algorithm 1} 

\begin{algorithmic}
\STATE Start with an initial value $B = B_{\text{obs}}$ containing the entries belonging to the training set. Define the dummy variable $v= B$. Choose $L = 0.1$, $\beta = 1.2$, $\lambda_1 = 20 \sqrt{\log(p+q)/(nq)}$, and $\lambda_2 = \sqrt{\log(p+q)/(nq)}$.
\WHILE{Stopping criterion is not satisfied OR $t \leq \text{maxiter}$ } 
\STATE $\text{obj} \leftarrow F_i(B) + \lambda_i \Vert B \Vert_{\text{nuclear}} $
\STATE $B_{\text{prev}} \leftarrow B$
\STATE $\delta = 1$
\WHILE{$\delta > 0.001$}
\STATE $B \leftarrow v - 1/L \triangledown F_i (v)$
\STATE $ B \leftarrow \text{proxnuc}_{\lambda/L} (B)$
\STATE $\delta \leftarrow F_i(B) + \lambda \Vert B \Vert_{\text{nuclear}} - \text{obj} - \trace(\triangledown F_i(B_{\text{prev}})^T (B - B_{\text{prev}}))- L/2\Vert B - B_{\text{prev}} \Vert_F^2$
\vspace{-4mm}
\STATE $L \leftarrow \beta L$
\ENDWHILE
\STATE $L \leftarrow L/\beta$
\STATE 
$v \leftarrow B + t/(t+3) (B - B_{\text{prev}})$
\ENDWHILE

\end{algorithmic}
\end{algorithm} 
This method was applied to the MovieLens data set which consists of $p = 943$ users, $q = 1682$ movies, and $100'000$ observed ratings, we call the set containing the indices of the observed ratings $\Gamma$. The minimal and maximal ratings are $1$ and $5$ respectively. Every user has rated at least $20$ movies. The data set is available at \cite{movielens}. Training and testing sets of different sizes are drawn randomly without replacement from the set of observed ratings. The testing error is computed as follows: with $100'000 = n_{\text{test}} + n_{\text{train}}$ and using the disjoint union $\Gamma = \Gamma_{\text{test}} \cup \Gamma_{\text{train}}$ (where $\vert \Gamma_{\text{test}} \vert = n_{\text{test}}$ and $\vert \Gamma_{\text{train}} \vert = n_{\text{train}}$) we have
\begin{equation*}
\text{test error} = \frac{1}{n_{\text{test}}} \sum_{(j,k) \in \Gamma_{\text{test}}} (B^0_{j,k} - \hat{B}_{j,k})^2.
\end{equation*}
\newpage

The results are reported in Table \ref{tbl:resmov}:
\begin{center}
\begin{table}[H]
\begin{tabular}{|c|c|c|c|c|c|}
\hline
\multicolumn{2}{|c|}{}&\multicolumn{2}{|c|}{Huber loss} & \multicolumn{2}{|c|}{Quadratic loss}\\ \hline
training set size & test set size & test error & Iterations & test error & Iterations\\ \hline
$25'000 $& $75'000$ & $1.43$ & $6'000$& $1.32 $ & $6'000$ \\
$50'000 $& $50'000$ & $1.14$ & $6'000$ & $1.06 $ & $6'000$ \\ 
$75'000 $& $25'000$ & $1.01$ & $6'000$ & $0.96 $  & $6'000$ \\ \hline
\end{tabular}
\caption{ }
\label{tbl:resmov}
\end{table}
\end{center}
It can be observed that the test error of the quadratic loss estimator is slightly smaller than the test error for the Huber loss estimator. This might be due to the fact that there are no heavily corrupted entries in the data. On the other hand, this indicates that the Huber estimator is able to ``adapt'' also to the usual setting without heavy corruptions.
 
We have applied Algorithm \ref{Algorithm 1} also to the MovieLens data set with $1'000'209$ observed ratings from $6'040$ users on $3'952$ movies. The minimal and maximal ratings are $1$ and $5$ respectively. Every user has rated at least $20$ movies. The data set is available at \cite{movielens1m}.

The results are reported in Table \ref{tbl:resmov1m}:
\begin{center}
\begin{table}[H]
\begin{tabular}{|c|c|c|c|c|c|}
\hline
\multicolumn{2}{|c|}{ }&\multicolumn{2}{|c|}{Huber loss} & \multicolumn{2}{|c|}{Quadratic loss}\\ \hline
training set size & test set size & test error & Iterations& test error & Iterations\\ \hline
$250'000 $& $750'209$ & $1.05$ & $10'000$ & $1.04$ & $10'000$ \\
$500'000 $& $500'209$ & $0.92 $ & $10'000$ & $0.90$ & $10'000$ \\ 
$750'000 $& $250'209$ & $0.85$ & $10'000$& $0.84$ &$10'000$\\ \hline
\end{tabular}
\caption{ }
\label{tbl:resmov1m}
\end{table}
\end{center}
\subsection{Proofs of Lemmas in Section 2}
The following lemma shows ``equivalence'' of the nuclear norm and Frobenius norm. This fact is useful since it is more common and meaningful to measure the estimation error in the Frobenius norm. The rest of the lemma contains technicalities that are used in the sequel to derive the triangle property.
\begin{lemma}
\label{lemma:normequiv}
Let $A \in \mathbb{R}^{p \times q}$. Then 
\begin{equation*}
\left\Vert A \right\Vert_F \leq \left\Vert A \right\Vert_{\mbox{\emph{nuclear}}} \leq \sqrt{\rank{A}} \left\Vert A \right\Vert_F.
\end{equation*}
Let $P \in \mathbb{R}^{p \times s}$ with $P^TP = I$ and $s \leq p$. Then 
\begin{equation*}
\left\Vert PP^T A \right\Vert_F \leq \sqrt{s} \Lambda_{\mbox{\emph{max}}} \left(A \right)
\end{equation*}
and
\begin{equation*}
\left\Vert PP^TA \right\Vert_F \leq \left\Vert A \right\Vert_F, \quad \Vert A QQ^T \Vert_F \leq \Vert A \Vert_F, \quad \Vert A PP^T \Vert_F \leq \Vert A \Vert_F,
\end{equation*}
where $\left\Vert A \right\Vert_F$ is the Frobenius norm of $A$ and $\Lambda_{\mbox{\emph{max}}}(A)$ its largest singular value.
\end{lemma}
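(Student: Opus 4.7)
The lemma is a collection of standard facts about the relationship between the Frobenius norm, the nuclear norm, and projections; the proof strategy is routine and relies on the singular value decomposition together with properties of orthogonal projectors.

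For the first display, let $r = \rank(A)$ and $\sigma_1 \geq \dots \geq \sigma_r > 0$ denote the nonzero singular values of $A$, so that $\|A\|_F = \bigl(\sum_{i=1}^r \sigma_i^2\bigr)^{1/2}$ and $\|A\|_{\text{nuclear}} = \sum_{i=1}^r \sigma_i$. The lower bound $\|A\|_F \leq \|A\|_{\text{nuclear}}$ follows from the elementary inequality $\sum_i \sigma_i^2 \leq \bigl(\sum_i \sigma_i\bigr)^2$, which holds whenever the $\sigma_i$ are nonnegative. The upper bound $\|A\|_{\text{nuclear}} \leq \sqrt{r}\,\|A\|_F$ is an immediate Cauchy-Schwarz estimate applied to the vectors $(\sigma_1,\ldots,\sigma_r)$ and $(1,\ldots,1)$.

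For the second display, I would exploit $P^T P = I_s$ to reduce a norm of $PP^T A$ to a norm of $P^T A$. Specifically, $\|PP^T A\|_F^2 = \trace(A^T P P^T P P^T A) = \trace(A^T P P^T A) = \|P^T A\|_F^2$. Since $P^T A \in \mathbb{R}^{s \times q}$ has rank at most $s$, the first display gives $\|P^T A\|_F \leq \sqrt{s}\,\Lambda_{\max}(P^T A)$. Finally $P^T P = I_s$ means all $s$ singular values of $P^T$ equal $1$, so $\Lambda_{\max}(P^T A) \leq \Lambda_{\max}(P^T)\Lambda_{\max}(A) = \Lambda_{\max}(A)$; combining these gives $\|PP^T A\|_F \leq \sqrt{s}\,\Lambda_{\max}(A)$.

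For the three contraction inequalities, the key observation is that $PP^T$ is an orthogonal projector (symmetric idempotent) on $\mathbb{R}^p$, so $I - PP^T$ is also an orthogonal projector and in particular positive semidefinite. Therefore $\trace\bigl(A^T(I - PP^T)A\bigr) \geq 0$, i.e. $\|PP^T A\|_F^2 = \trace(A^T PP^T A) \leq \trace(A^T A) = \|A\|_F^2$. The same argument with $QQ^T$ acting on the right, using $\|A QQ^T\|_F^2 = \trace(QQ^T A^T A QQ^T) = \trace(A^T A QQ^T)$ and the positive semidefiniteness of $I - QQ^T$, yields the remaining two bounds.

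No step is a serious obstacle; the only point that requires a brief verification is the identity $\|PP^T A\|_F = \|P^T A\|_F$ (which uses precisely $P^T P = I$, not just that $P$ has orthonormal columns in a broader sense), but this is a one-line trace computation as above.
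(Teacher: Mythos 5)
Your proof is correct, and for the first and third displays it is essentially the argument the paper gives (Cauchy--Schwarz/H\"older on the singular values, and trace manipulations with the orthogonal projectors $PP^T$ and $QQ^T$). For the middle inequality $\Vert PP^TA\Vert_F\le\sqrt{s}\,\Lambda_{\max}(A)$ you take a genuinely different route: you reduce to $\Vert P^TA\Vert_F$ via $\Vert PP^TA\Vert_F=\Vert P^TA\Vert_F$ and then combine a rank bound with submultiplicativity of the spectral norm, whereas the paper writes $\Vert PP^TA\Vert_F^2=\sum_j u_j^TAA^Tu_j$ with $u_j=PP^Te_j$, bounds each quadratic form by $\Lambda_{\max}^2(A)\Vert PP^Te_j\Vert_2^2$, and sums using $\trace(PP^T)=s$. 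Both arguments are short and correct; yours isolates the clean identity $\Vert PP^TA\Vert_F=\Vert P^TA\Vert_F$, while the paper's avoids invoking submultiplicativity. One small imprecision: the inequality you need at that point is $\Vert M\Vert_F\le\sqrt{\rank M}\,\Lambda_{\max}(M)$, which is not literally ``the first display'' (that display compares Frobenius and nuclear norms); it is instead the equally elementary estimate $\Vert M\Vert_F^2=\sum_i\sigma_i^2\le\rank(M)\,\sigma_1^2$, so you should cite that directly rather than the first claim of the lemma.
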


\begin{proof}[Proof of Lemma \ref{lemma:normequiv}]
Consider the singular value decomposition (SVD) of $A$ with $s:=\mbox{rank}(A)$
\begin{equation}
A= P_A \Lambda_A Q_A^T
\end{equation}
with $P_A^T P_A = \mathbbm{1}_{s \times s}$, $Q_A^T Q_A = \mathbbm{1}_{s \times s}$ and $\Lambda_A = \diag \left(\Lambda_{A,1}, \dots, \Lambda_{A,s} \right)$. Then the nuclear norm of $A$ can be written as
\begin{equation}
\left\Vert A \right\Vert_{\mbox{nuclear}} = \sum_{k=1}^s  \Lambda_{A,k}.
\end{equation}

The first claim follows from H\"older's inequality applied in two different manners for the lower and upper bounds.

For the second claim consider the $p-$dimensional  $j-$th unit vector $e_j$. Define
\begin{equation*}
u := PP^T e_j.
\end{equation*}
Then
\begin{align*}
u^T A A^T u = \frac{\left\Vert A^T u \right\Vert_2^2}{\left\Vert u \right\Vert_2^2} \left\Vert u \right\Vert_2^2 &\leq \left( \underset{\left\Vert u \right\Vert_2 =1}{\max} \ \left\Vert A^T u \right\Vert_2 \right)^2 \left\Vert u \right\Vert_2^2 \\
&= \Lambda_{\max}^2 \left( A \right) \left\Vert PP^T e_j \right\Vert_2^2.
\end{align*}
Then, by the invariance of the trace under cyclic permutations we obtain the claimed result.
\end{proof}

In order to show that the triangle property holds we also need the dual norm of the nuclear norm so that to apply the dual norm inequality (see Lemma \ref{lemma:subgrad}). The subdifferential of the nuclear norm is then used to deduce the triangle property.

The following lemma gives the dual norm and the subdifferential of the nuclear norm. It cites results of \cite{lange2013optimization} and \cite{watson1992characterization}.
\begin{lemma}
\label{lemma:subgrad}
Let $A \in \mathbb{R}^{p \times q}$.The dual norm of the nuclear norm \linebreak $\left\Vert A \right\Vert_{\mbox{\emph{nuclear}}} $ is given by
\begin{equation*}
\Omega_* (A) = \Lambda_{\max} (A),
\end{equation*}
where $\Lambda_{\max}(A)$ is the largest singular value of $A$ .
Moreover, the subgradient of the nuclear norm is given by
\begin{align*}
\partial \left\Vert B \right\Vert_{\mbox{\emph{nuclear}}} = \left\lbrace Z = PQ^T + \left( \mathbbm{1}_{p \times p } - PP^T \right) W \left( \mathbbm{1}_{q \times q} - QQ^T \right): \right. \\
\left. \Lambda_{\max} \left( W \right) = 1 \right\rbrace.
\end{align*}
\end{lemma}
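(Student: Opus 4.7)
The plan is to treat the two claims separately, with the dual-norm statement serving as the main tool for the subdifferential characterization.

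For the dual norm, I would start from the definition
$\Omega_*(A) = \sup\{\trace(A^T B) : \|B\|_{\mbox{nuclear}} \leq 1\}$
and invoke the von Neumann trace inequality
$|\trace(A^T B)| \leq \sum_i \sigma_i(A)\sigma_i(B)$,
where the $\sigma_i$'s are ordered singular values. Bounding the right hand side by $\sigma_1(A) \sum_i \sigma_i(B) = \Lambda_{\max}(A)\,\|B\|_{\mbox{nuclear}}$ gives $\Omega_*(A) \leq \Lambda_{\max}(A)$. For the matching lower bound, take $B = u_1 v_1^T$ with $u_1, v_1$ the leading left and right singular vectors of $A$; then $\|B\|_{\mbox{nuclear}} = 1$ and $\trace(A^T B) = u_1^T A v_1 = \Lambda_{\max}(A)$.

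For the subdifferential, I would use the standard convex-analytic fact that for any norm $\|\cdot\|$ with dual norm $\|\cdot\|_*$,
\begin{equation*}
Z \in \partial \|B\| \iff \|Z\|_* \leq 1 \ \mbox{and} \ \trace(Z^T B) = \|B\|.
\end{equation*}
Combined with the first part, this reduces the claim to showing that the set $\{Z = PQ^T + (I - PP^T) W (I - QQ^T) : \Lambda_{\max}(W) \leq 1\}$ is exactly $\{Z : \Lambda_{\max}(Z) \leq 1, \ \trace(Z^T B) = \|B\|_{\mbox{nuclear}}\}$. The inclusion $\subseteq$ is a direct check: because $P^T(I - PP^T) = 0$ and $(I - QQ^T) Q = 0$, the cross term contributes nothing to $\trace(Z^T B)$, so $\trace(Z^T B) = \trace(Q P^T P \Lambda Q^T) = \trace(\Lambda) = \|B\|_{\mbox{nuclear}}$; and writing $Z$ in the $2\times 2$ block basis $(P, P_\perp)$ on the left and $(Q, Q_\perp)$ on the right, it becomes a block-diagonal matrix with blocks $I_{s^\star}$ and $P_\perp^T W Q_\perp$, so $\Lambda_{\max}(Z) = \max(1, \Lambda_{\max}(P_\perp^T W Q_\perp)) \leq 1$.

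The harder direction is the converse inclusion. Given $Z$ with $\Lambda_{\max}(Z) \leq 1$ and $\trace(Z^T B) = \|B\|_{\mbox{nuclear}}$, I would decompose
\begin{equation*}
Z = PP^T Z QQ^T + PP^T Z(I - QQ^T) + (I - PP^T) Z QQ^T + (I - PP^T) Z (I - QQ^T),
\end{equation*}
and argue, using the $2 \times 2$ block representation, that the $(1,1)$ block must equal $P Q^T$ and that the two off-diagonal blocks must vanish. Concretely, the compression inequality for operator norms gives $\Lambda_{\max}(P^T Z Q) \leq \Lambda_{\max}(Z) \leq 1$, while $\trace(Z^T B) = \trace(\Lambda P^T Z Q)$ together with $\trace(\Lambda) = \|B\|_{\mbox{nuclear}}$ and the von Neumann trace inequality force $P^T Z Q = I_{s^\star}$. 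With the $(1,1)$ block pinned down to an isometry, any non-zero off-diagonal block would push $\Lambda_{\max}(Z)$ strictly above $1$; this last geometric step is the main obstacle and is where I would lean on the computations of Watson and Lange cited in the statement. The remaining $(2,2)$ block can be any operator of norm at most one, and extending it arbitrarily to a full $p \times q$ matrix $W$ produces the claimed parametrization.
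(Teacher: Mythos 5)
The paper does not actually prove this lemma: its ``proof'' consists of two citations, pointing to Example 14.3.6 of Lange for the dual-norm identity and to Watson for the subdifferential formula. Your proposal supplies a genuine self-contained argument, and its architecture is sound: von Neumann's trace inequality plus the rank-one test matrix $u_1v_1^T$ correctly pins down the dual norm, and the reduction of the subdifferential to the condition $\Lambda_{\max}(Z)\leq 1$, $\trace(Z^TB)=\Vert B\Vert_{\mbox{nuclear}}$ is the standard convex-analytic route (and is exactly how Watson proceeds). Your forward inclusion is a complete computation. The one step you leave to the references --- that the off-diagonal blocks must vanish --- is in fact elementary and you need not defer it: once $P^TZQ=I_{s^\star}$ (which follows because $P^TZQ$ is a contraction whose diagonal entries all equal $1$, forcing each of its columns to be a standard basis vector), any column of $ZQ$ has Euclidean norm at least $1$ from the identity block and at most $1$ from $\Lambda_{\max}(Z)\leq 1$, so the $(2,1)$ block is zero, and the transposed argument kills the $(1,2)$ block. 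Two small points worth recording: your parametrization with $\Lambda_{\max}(W)\leq 1$ is the correct one (the paper's ``$=1$'' excludes, e.g., $W=0$ and does not match the set you characterize), and the characterization requires $P$ and $Q$ to contain only the singular vectors associated with the \emph{nonzero} singular values of $B$, a restriction the paper's notation glosses over. In short, your approach buys an actual proof where the paper offers only pointers, at the cost of having to carry out the block-matrix bookkeeping that the cited sources perform.
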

\begin{proof}
The derivation of the dual norm of the nuclear norm can be found in Example 14.3.6. from \cite{lange2013optimization}. A justification for the second claim can be found in \cite{watson1992characterization}.
\end{proof}

\begin{proof}[Proof of Lemma 2.1 in the main text]
Let $Z \in \partial	\left\Vert B^+ \right\Vert_{\mbox{nuclear}}$, i.e.
\begin{equation*}
Z = P^+Q^{+^T} + \left( \mathbbm{1}_{p \times p } - P^+P^{+^T} \right) W \left(\mathbbm{1}_{q \times q} - Q^+Q^{+^T} \right),
\end{equation*}
where $W \in \mathbb{R}^{p \times q}$ is such that $\Lambda_{\max} \left( W \right) = 1$. Therefore, it is possible to write
\begin{align*}
Z = Z_1 + Z_2, \quad \mbox{where} \ Z_1 &:= P^+Q^{+^T} \ \\
\mbox{and} \ Z_2 &:= \left( \mathbbm{1}_{p \times p } - P^+P^{+^T} \right) W \left(\mathbbm{1}_{q \times q} - Q^+Q^{+^T} \right).
\end{align*}
Recall the definition of the subdifferential of the nuclear norm
\begin{align*}
\partial \left\Vert B^+ \right\Vert_{\mbox{nuclear}} = \left\lbrace Z \vert \left\Vert B' \right\Vert_{\mbox{nuclear}} - \left\Vert B^+ \right\Vert_{\mbox{nuclear}} \geq \trace\left( Z^T \left( B'-B^+ \right) \right), \right. \\
\left. \forall B' \in \mathbb{R}^{p \times q} \right\rbrace
\end{align*}
For $ Z \in \partial \left\Vert B^+ \right\Vert_{\mbox{nuclear}}$ we have 
\begin{align*}
\left\Vert B^+ \right\Vert_{\mbox{nuclear}} - \left\Vert B' \right\Vert_{\mbox{nuclear}} &\leq \trace \left( Z^T \left(B^+- B' \right) \right)\\
&= \trace \left( \left( Z_1 + Z_2 \right)^T \left(B^+-B' \right) \right).
\end{align*}
To prove the first assertion we need to bound the right hand side of the above inequality. For simplicity consider first $\trace \left( Z_1^T B' \right)$. Using the invariance of the trace under cyclic permutations we have
\begin{align*}
\trace \left( Z_1^T B' \right) &= \trace \left( Q^+ P^{+^T} B' \right) = \trace \left( P^{+^T} B' Q^+ \right) \\
&= \trace \left( \underbrace{P^{+^T}P^+}_{= \mathbbm{1}} P^{+^T} B' Q^+ \underbrace{Q^{+^T}Q^+}_{=\mathbbm{1}}  \right) \\
&= \trace \left( Q^+P^{+^T} P^+P^{+^T} B' Q^+Q^{+^T} \right) \\
&\leq \Omega_* \left( Q^+ P^{+^T} \right) \Vert P^+P^{+^T} B' Q^+Q^{+^T}  \Vert_{\mbox{nuclear}} \\ 
&= \left\Vert P^+P^{+^T} B' Q^+Q^{+^T} \right\Vert_{\mbox{nuclear}},
\end{align*}
since $\Lambda_{\max} \left( P^+Q^{+^T} \right) =1$.

On the other hand, consider $\trace \left( Z_2^T B' \right)$. Again by the invariance of the trace under cyclic permutations we have
\begin{align*}
\trace \left( Z_2^T B' \right)&= \trace \left( \left( \mathbbm{1}_{q \times q } - Q^+Q^{+^T} \right) W^T \left(\mathbbm{1}_{p \times p} - P^+P^{+^T} \right) B' \right) \\
&= \trace \left(W^T \left( \mathbbm{1}_{p \times p } - P^+P^{+^T} \right) B' \left(\mathbbm{1}_{q \times q} - Q^+Q^{+^T} \right) \right) \\
&\leq \underbrace{\Lambda_{\max} \left( W \right)}_{=1} \left\Vert \left( \mathbbm{1}_{p \times p } - P^+P^{+^T} \right) B' \left(\mathbbm{1}_{q \times q} - Q^+Q^{+^T} \right) \right\Vert_{\mbox{nuclear}} \\
&= \underset{W: \Lambda_{\max} \left( W \right) = 1 }{\sup} \left\vert \trace \left( W^T \left( \mathbbm{1}_{p\times p} - P^+P^{+^T} \right) B' \left( \mathbbm{1}_{q \times q} - Q^+Q^{+^T} \right) \right) \right\vert.
\end{align*}
Hence, it is possible to find a $W$ such that $\Lambda_{\max} \left( W \right) =1 $ and such that
\begin{equation}
\trace \left( W^T B' \right) = \left\Vert \left( \mathbbm{1}_{p \times p } - P^+P^{+^T} \right) B' \left(\mathbbm{1}_{q \times q} - Q^+Q^{+^T} \right) \right\Vert_{\mbox{nuclear}}.
\end{equation}
Substituting $B'$ with $B'-B^+$ we have that
\begin{align*}
&\underset{Z \in \partial \left\Vert B^+ \right\Vert_{\mbox{nuclear}}}{\max} \trace \left( Z^T \left(B' - B^+ \right) \right)  \\
&=\underset{\Lambda_{\max} (W) =1}{\max} \left\lbrace \trace \left(  \left( \mathbbm{1}_{q \times q } - Q^+Q^{+^T} \right) W^T \left(\mathbbm{1}_{p \times p} - P^+P^{+^T} \right) \left(B'-B^+ \right) \right) \right. \\
&\phantom{=} \left. + \trace \left( Q^+P^{+^T} \left( B' - B^+ \right)  \right) \right\rbrace \\
\end{align*}
Hence,
\begin{align*}
&\left\Vert P^+P^{+^T} \left(B'-B^+\right) Q^+Q^{+^T} \right\Vert_{\mbox{nuclear}} - \left\Vert \left(\mathbbm{1} - P^+P^{+^T} \right) B' \left( \mathbbm{1} - Q^+Q^{+^T} \right) \right\Vert_{\mbox{nuclear}} \\
&\leq \sqrt{s} \left\Vert P^+P^{+^T} \left( B'-B^+ \right) Q^+Q^{+^T} \right\Vert_F - \Omega_{B^+}^- \left( B'\right) \\
&\leq \Omega_{B^+}^+ \left( B' - B^+ \right).
\end{align*}
For the second assertion we have
\begin{align*}
\left\Vert B' \right\Vert_{\mbox{nuclear}} &= \left\Vert P^+P^{+^T}B' + B'Q^+Q^{+^T} - P^+P^{+^T} B' Q^+Q^{+^T} \right.\\
&\phantom{=}\left.+ \left( \mathbbm{1}-P^+P^{+^T} \right) B'\left(\mathbbm{1}-Q^+Q^{+^T}\right) \right\Vert_{\mbox{nuclear}} \\
&\leq \left\Vert P^+P^{+^T} B' \right\Vert_{\mbox{nuclear}} + \left\Vert B' Q^+Q^{+^T} \right\Vert_{\mbox{nuclear}} \\
&\phantom{=} + \left\Vert P^+P^{+^T} B' Q^+Q^{+^T} \right\Vert_{\mbox{nuclear}} \\
&\phantom{=} + \left\Vert \left(\mathbbm{1} - P^+P^{+^T} \right)B' \left(\mathbbm{1} -Q^+Q^{+^T} \right) \right\Vert_{\mbox{nuclear}} \\
&\leq \sqrt{s} \left(\left\Vert P^+P^{+^T} B' \right\Vert_F + \left\Vert B' Q^+Q^{+^T} \right\Vert_F + \left\Vert P^+P^{+^T} B' Q^+Q^{+^T} \right\Vert_F \right) \\
&\phantom{=} + \left\Vert \left(\mathbbm{1} - P^+P^{+^T} \right)B' \left(\mathbbm{1} -Q^+Q^{+^T} \right) \right\Vert_{\mbox{nuclear}}.
\end{align*}
\end{proof}

Since the dual of $\underline{\Omega}$ may not be easy to deal with we bound it by the dual norm of the nuclear norm. By doing so, it will be possible to apply results about the tail of the maximum singular value of a sum of independent random matrices.
\begin{lemma}
\label{lemma:dualbound}
For the dual norm of $\underline{\Omega}$ we have that
\begin{equation*}
\underline{\Omega}_* \leq \Lambda_{\max}.
\end{equation*}
\end{lemma}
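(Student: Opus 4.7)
The plan is to exploit the norm inequality stated at the end of Lemma \ref{def:triangle}, namely $\|\cdot\|_{\text{nuclear}} \leq \Omega_{B^+}^+ + \Omega_{B^+}^- = \underline{\Omega}$, which is the only input we need beyond the standard duality machinery. Dualizing should reverse the inequality, and since the dual of the nuclear norm is the spectral norm $\Lambda_{\max}$ by Lemma \ref{lemma:subgrad}, the conclusion will follow immediately.

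Concretely, I would start from the definition of the dual norm,
\begin{equation*}
\underline{\Omega}_*(A) \;=\; \sup_{B \in \mathbb{R}^{p \times q}} \Bigl\{ \trace(A^T B) : \underline{\Omega}(B) \leq 1 \Bigr\}.
\end{equation*}
From $\|B\|_{\text{nuclear}} \leq \underline{\Omega}(B)$, any matrix $B$ in the unit ball of $\underline{\Omega}$ automatically lies in the unit ball of the nuclear norm. Hence the supremum above is bounded by the supremum taken over the larger constraint set $\{B : \|B\|_{\text{nuclear}} \leq 1\}$, which by definition is the dual norm of the nuclear norm evaluated at $A$.

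Invoking Lemma \ref{lemma:subgrad}, the dual of the nuclear norm is the operator norm $\Lambda_{\max}$, so
\begin{equation*}
\underline{\Omega}_*(A) \;\leq\; \sup\bigl\{ \trace(A^T B) : \|B\|_{\text{nuclear}} \leq 1 \bigr\} \;=\; \Lambda_{\max}(A).
\end{equation*}
There is really no obstacle here; the only subtlety is making sure $\underline{\Omega}$ is a bona fide norm so that the dual is well-defined, but this is exactly the content of the preceding remark stating that $\Omega_{B^+}^+$ and $\Omega_{B^+}^-$ form a complete pair. The inequality $\|\cdot\|_{\text{nuclear}} \leq \underline{\Omega}$, which does the actual work, has already been established in Lemma \ref{def:triangle}, so nothing further is required.
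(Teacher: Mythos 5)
Your proof is correct and follows exactly the paper's own argument: the inequality $\Vert\cdot\Vert_{\mbox{nuclear}} \leq \underline{\Omega}$ from Lemma \ref{def:triangle} reverses under dualization, and Lemma \ref{lemma:subgrad} identifies the dual of the nuclear norm as $\Lambda_{\max}$. You simply spell out the definition of the dual norm more explicitly than the paper does.
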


\begin{proof}
By Lemma 2.1 in the main text it is known that 
\begin{equation*}
\left\Vert \cdot \right\Vert_{\mbox{nuclear}} \leq \Omega_{B^+}^+ + \Omega_{B^+}^- = \underline{\Omega}.
\end{equation*}
Therefore, using Lemma 2.2 in the main text we have for the dual norms that
\begin{equation*}
\underline{\Omega}_* \leq \left\Vert \cdot \right\Vert_{\mbox{nuclear}^*} = \Lambda_{\max}.
\end{equation*}
\end{proof}

\subsection{Probability bounds for the empirical process}
\label{s:empiric}
To bound the expectation of $\Lambda_{\max}\left( \frac{1}{n} \sum_{i =1}^n \tilde{\varepsilon}_i X_i \right)$ where $\tilde{\varepsilon}_1, \dots, \tilde{\varepsilon}_n$ are i.i.d. Rademacher random variables independent of $(X_i, Y_i)_{i=1}^n$ we use Theorem 2.1 in the main text together with the fact that the $2$-Orlicz norm of a Rademacher random variable $\tilde{\varepsilon}$ is equal to
	\begin{equation*}
	\left\Vert \tilde{\varepsilon} \right\Vert_{\psi_2} = \sqrt{\frac{1}{\log 2}}.
	\end{equation*}
Now we check the assumptions of Theorem 2.1 in the main text.

For the case of matrix completion with uniform sampling we obtain that
\begin{equation*}
\mathbb{E} X_i X_i^T = \frac{1}{pq} \iota \iota^T,
\end{equation*}
where $\iota$ is a $q-$vector consisting of only $1$'s. It follows that
\begin{equation*}
\Lambda_{\max}\left(\mathbb{E} X_i X_i^T \right) = \Lambda_{\max} \left(\frac{1}{pq} \iota \iota^T \right)= \frac{1}{p}.
\end{equation*}
Analogously, we obtain that
\begin{equation*}
\Lambda_{\max} \left(\mathbb{E}X_i^T X_i \right) = \frac{1}{q}.
\end{equation*}
Notice that by the independence of the Rademacher sequence and the random matrices $X_i$ we have $\mathbb{E} \tilde{\varepsilon}_i X_i^T = \mathbb{E} \tilde{\varepsilon}_i \mathbb{E} X_i^T = 0$, $\mathbb{E} \tilde{\varepsilon}_i^2 X_i X_i^T = \mathbb{E} X_i X_i^T$, and $\mathbb{E} \tilde{\varepsilon}_i^2 X_i^T X_i = \mathbb{E} X_i^T X_i$. Moreover, since $\Lambda_{\max}(\cdot)$ is a norm we have
\begin{equation*}
\Lambda_{\max} (\tilde{\varepsilon}_i X_i) = \left\vert \tilde{\varepsilon}_i \right\vert \Lambda_{\max} (X_i) \leq \left\vert \tilde{\varepsilon}_i \right\vert, \forall i.
\end{equation*}
Hence,
\begin{equation*}
\left\Vert \Lambda_{\max}^2(\tilde{\varepsilon}_i X_i) \right\Vert_{\psi_2} \leq \left\Vert \tilde{\varepsilon} \right\Vert_{\psi_2} = \sqrt{\frac{1}{\log 2}}, \forall i.
\end{equation*}

\begin{lemma}
	\label{lemma:lipschitzdiff}
	\begin{enumerate}
		\item Suppose that $\rho$ is differentiable and Lipschitz continuous with constant $L$.  Suppose further that $\dot{\rho}(x)x$ is Lipschitz continuous with Lipschitz constant $\tilde{L}$. Assume that for a constant $K_{\underline{\Omega}}$
\begin{equation*}
\underset{1 \leq i \leq n}{\max} \underline{\Omega}_* (X_i) \leq K_{\underline{\Omega}}.
\end{equation*}
Define for all $M > 0$
\begin{equation}
\mathbf{Z}_M := \underset{B' \in \mathcal{B} : \underline{\Omega}(B-B')\leq M}{\sup}  \left\vert \trace\left( \left(\dot{R}_n (B') - \dot{R}(B') \right)^T \left(B - B' \right) \right) \right\vert.
\end{equation}
Then we have for a constant $C_0 >0$ and $\tilde{\lambda}_{1} = 8 \eta \tilde{L} p \log(p+q)/(3n)$ that
\begin{align*}
 &\mathbf{Z}_M \\
 &\leq \tilde{L} M \left( (8 C_0 + \sqrt{2})\sqrt{\frac{\log (p+q)}{nq}} +8C_0 \sqrt{\log(1+q)} \frac{\log(p+q)}{n} \right) + \tilde{\lambda}_{1}
\end{align*}
with probability at least $1-\exp(-p \log(p+q))$.

\item Let $\rho$ be a Lipschitz continuous function with Lipschitz constant $L$. Assume that for a constant $K_{\underline{\Omega}}$
\begin{equation*}
\underset{1 \leq i \leq n}{\max} \ \underline{\Omega}_* \left( X_i \right) \leq K_{\underline{\Omega}}.
\end{equation*}
Define for all $M>0$
\begin{equation*}
\mathbf{Z}_M := \underset{B' \in \mathcal{B}: \underline{\Omega}(B' - B) \leq M}{\sup} \left\vert \left[R_n(B') - R(B') \right] - \left[R_n(B) - R(B) \right] \right\vert.
\end{equation*}
Then we have for a constant $C_0 > 0$ and $\tilde{\lambda}_{2} = 8 \eta L p \log(p+q)/(3n)$ that
\begin{align*}
	&\mathbf{Z}_M \\
	&\leq LM \left( (8 C_0 + \sqrt{2})\sqrt{\frac{\log (p+q)}{nq}} +8C_0 \sqrt{\log(1+q)} \frac{\log(p+q)}{n} \right) + \tilde{\lambda}_{2}
\end{align*}
with probability at least $1-\exp(-p \log(p+q))$.
	\end{enumerate}
\end{lemma}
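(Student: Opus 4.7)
The plan is to apply a Bousquet-type concentration inequality to reduce the high-probability control of $\mathbf{Z}_M$ to the control of its expectation, and then to bound the expectation via the standard empirical-process chain: Gin\'e--Zinn symmetrization, Ledoux--Talagrand contraction, the dual-norm inequality together with Lemma \ref{lemma:dualbound}, and Theorem \ref{thm:orlicz} with tail integration. All auxiliary ingredients are already computed in the paragraph preceding the lemma: $S^2 = 1/q$ (since $q \leq p$), $K = 1/\sqrt{\log 2}$, and the entrywise bound $\|B-B'\|_\infty \leq 2\eta$ together with Assumption \ref{ass:lipschitz} yields the envelope and variance estimates.

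I would first handle Claim~2. Set $g_{B'}(X,Y) := \rho(Y-\trace(XB')) - \rho(Y-\trace(XB))$; since $\rho$ is $L$-Lipschitz and each $X_i$ has a single unit entry, $\|g_{B'}\|_\infty \leq 2\eta L$ and $\Var(g_{B'}) \leq L^2 \|B-B'\|_F^2/(pq) \leq L^2 M^2/(pq)$, using $\|B-B'\|_F \leq \underline\Omega(B-B') \leq M$. Gin\'e--Zinn symmetrization with independent Rademacher variables $\tilde\varepsilon_i$ gives $\mathbb{E}\mathbf{Z}_M \leq 2\,\mathbb{E}\sup_{B'}|\tfrac{1}{n}\sum_i \tilde\varepsilon_i g_{B'}(X_i,Y_i)|$, and Ledoux--Talagrand contraction applied to the $L$-Lipschitz $\rho$ composed with the linear map $B' \mapsto \trace(X_iB')$ removes $\rho$ at the cost of a factor $2L$. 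Combining with the dual-norm inequality and Lemma \ref{lemma:dualbound} produces $\mathbb{E}\mathbf{Z}_M \leq 4LM\,\mathbb{E}\Lambda_{\max}(\tfrac{1}{n}\sum_i \tilde\varepsilon_i X_i)$, and Theorem \ref{thm:orlicz} together with tail integration $\mathbb{E} Z = \int_0^\infty \mathbb{P}(Z>s)\,ds$ bounds this expectation by $C_0\bigl(\sqrt{\log(p+q)/(nq)} + \sqrt{\log(1+q)}\log(p+q)/n\bigr)$. Finally Bousquet's inequality at level $t = p\log(p+q)$, with the variance and envelope computed above, adds a deviation $\sqrt{2\sigma^2 t/n} = \sqrt 2\,LM\sqrt{\log(p+q)/(nq)}$ and a deterministic remainder $\tilde\lambda = 8\eta L p\log(p+q)/(3n)$; collecting these pieces yields the stated bound with probability at least $1-\exp(-p\log(p+q))$.

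For Claim~1 the same recipe applies to $f_{B'}(X,Y) := \dot\rho(Y-\trace(XB'))\,\trace(X(B-B'))$, whose envelope and variance satisfy the same bounds (using $|\dot\rho|\leq L$). Symmetrization, the dual-norm inequality, Theorem \ref{thm:orlicz} and the Bousquet step are identical; the one delicate point, and the main obstacle, is the contraction. The symmetrized process now contains multipliers $\dot\rho(Y_i-\trace(X_iB'))$ that depend on $B'$, so Ledoux--Talagrand does not apply to the product directly. I would handle this by exploiting the structure available in the target application, namely that for the Huber loss $\dot\rho_H$ is $2$-Lipschitz, so the composite $B' \mapsto \dot\rho(Y_i - \trace(X_iB'))\,\trace(X_i(B-B'))$ is Lipschitz in the scalar $\trace(X_iB')$ with a constant of order $L$ on the range $|\trace(X_iB')| \leq \eta$; a standard contraction-with-bounded-multipliers device then replaces $\tilde\varepsilon_i \dot\rho(\cdot)/L$ by a genuine Rademacher variable up to a universal constant. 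Once this step is established the remainder of the argument is literally that of Claim~2, producing the same rate and the same residual $\tilde\lambda$.
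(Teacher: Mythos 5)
Your proposal follows the paper's proof essentially step for step: Bousquet's inequality at level $t=p\log(p+q)$ reduces the claim to a bound on $\mathbb{E}\mathbf{Z}_M$, which is then controlled via symmetrization, contraction, the dual-norm inequality together with Lemma \ref{lemma:dualbound}, and Theorem \ref{thm:orlicz} with tail integration, using the same variance bound $L^2M^2/(pq)$ and envelope $2\eta L$. The one place you are more careful than the paper is the contraction step in Claim 1: the paper simply invokes Theorem \ref{thm:contraction} on $\tilde\varepsilon_i\,\dot\rho\bigl(Y_i-\trace(X_iB')\bigr)\trace\bigl(X_i(B-B')\bigr)$, whereas you correctly observe that the multiplier depends on $B'$ and propose a contraction-with-bounded-multipliers repair; this is sound for the Huber loss but yields a Lipschitz constant of order $L+\eta$ rather than the $L$ the paper asserts, which changes only constants and not the rate.
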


\begin{proof}
The proof of this lemma is based on the Symmetrization Theorem, on the Contraction Theorem, on the dual norm inequality, and on Bousquet's Concentration Theorem. We use the notation $\rho(X_i B, Y_i) := \rho(Y_i - \trace(X_i B))$ and $\dot{\rho} (X_i B, Y_i) := \dot{\rho} (Y_i - \trace(X_i B))$.

We have
\begin{align}
&\mathbb{E} \left[ \underset{B' \in \mathcal{B} : \underline{\Omega}(B-B')\leq M}{\sup}  \left\vert \trace \left( \left(\dot{R}_n (B')-\dot{R} (B') \right)^T \left(B - B' \right)  \right) \right\vert \right] \nonumber \\
&= \mathbb{E} \Bigg[\underset{B' \in \mathcal{B}: \underline{\Omega}(B - B') \leq M}{\sup}  \left\vert\frac{1}{n} \sum_{i=1}^n \trace \left( \left(B - B' \right)^T \left( X_i^T \dot{\rho} (X_i B', Y_i) \right. \right. \right. \\
&\left.\left. \left. \phantom{\leq}- \mathbb{E} X_i^T \dot{\rho} ( X_i B', Y_i) \right) \right) \right\vert \Bigg] \nonumber \\
&\leq 2 \mathbb{E} \left[\underset{B' \in \mathcal{B}: \underline{\Omega}(B- B') \leq M}{\sup}  \left\vert \frac{1}{n} \sum_{i=1}^n \tilde{\varepsilon}_i \trace \left( (B - B')^T X_i^T \dot{\rho}(X_i B', Y_i) \right) \right\vert \right] \nonumber \\
&\leq 4 \tilde{L}\underline{\Omega}(B - B') \mathbb{E} \underline{\Omega}_* \left(\sum_{i=1}^n \tilde{\varepsilon}_i X_i \right)/n, \nonumber \\
&\leq 4\tilde{L}M \mathbb{E} \Omega_* \left(\sum_{i=1}^n \tilde{\varepsilon}_i X_i \right)/n, \ \mbox{since} \ \Omega \leq \underline{\Omega} \Rightarrow \underline{\Omega}_* \leq \Omega_*.
\label{eqn:symmandcontr}
\end{align}
The first inequality follows from Theorem \ref{thm:symmetrization}, the second from Theorem \ref{thm:contraction} and the dual norm inequality.
 With $S=1/\sqrt{q}$ and $K =\sqrt{1/\log 2}$ in Theorem 2.1 in the main text together with the concavity of the logarithm 
\begin{align*}
\sqrt{\log\left( \sqrt{\frac{q}{\log 2}} \right)} = \sqrt{\frac{1}{2} \log q + \frac{1}{2} \log\left( \frac{1}{\log 2}\right)} \leq \sqrt{\log\left( \frac{q}{2} + \frac{1}{2\log 2} \right) } \\
 \leq \sqrt{\log \left( q + 1 \right)}
\end{align*}
we obtain integrating the tail that the expectation of the largest singular value of the sum of masks is bounded by
\begin{equation}
\mathbb{E} \Lambda_{\max} \left( \sum_{i=1}^n \tilde{\varepsilon}_i X_i \right)/n \leq C_0 \left( \sqrt{\frac{\log(p+q)}{nq}} + \sqrt{\log\left( q + 1 \right) } \frac{\log(p+q)}{n} \right).
\end{equation}

Hence, defining
\begin{equation}
f(X_i B' ) := \trace \left(\left(B - B' \right)^T X_i^T \dot{\rho} (X_i B', Y_i ) \right)
\end{equation}
we have using the Lipschitz continuity of the loss function and the fact that the $X_i \in \chi$ are i.i.d.
\begin{align*}
&\underset{B' \in \mathcal{B} : \underline{\Omega}(B-B') \leq M}{\sup} \Var(f(X_1B'))  \\
&= \underset{B' \in \mathcal{B}: \underline{\Omega}(B-B') \leq M}{\sup} \Var \left( \sum_{l=1}^q \sum_{j = 1}^p X_{1_{lj}}(B_{jl} - B'_{jl})\dot{\rho}(X_1 B', Y_i)) \right)\\
&\leq \underset{B' \in \mathcal{B} : \underline{\Omega}(B-B') \leq M}{\sup} L^2 \mathbb{E} \left[ \left(\sum_{l=1}^q \sum_{j=1}^p X_{1_{lj}} (B_{jl} - B_{jl}') \right)^2 \right] \\
&= \underset{B' \in \mathcal{B} : \underline{\Omega} (B- B') \leq M }{\sup} L^2 \mathbb{E} \left[ \sum_{l=1}^q \sum_{j=1}^p X_{1_{lj}}^2 (B_{jl} - B_{jl}')^2  \right] \\
&= \underset{B' \in \mathcal{B}: \underline{\Omega}(B-B')\leq M}{\sup} \frac{\tilde{L}^2}{pq} \Vert B_{jl} - B_{jl}' \Vert_F^2 \\
&\leq \frac{\tilde{L}^2 M^2}{pq} =: T^2.
\end{align*}
Therefore, using
\begin{equation}
\Vert f \Vert_\infty \leq 2 \eta \tilde{L} =: D
\end{equation}
we obtain from Bousquet's Concentration Theorem \ref{thm:bousquet} that for all $t>0$
\begin{align*}
\mathbb{P} \left(\mathbf{Z}_M \geq 8 \tilde{L}M C_0 \left(\sqrt{\frac{\log(p+q)}{nq}} + \sqrt{\log \left(q+1\right)} \frac{\log(p+q)}{n} \right) \right. \\
\left. + \frac{M \tilde{L}}{\sqrt{pq}}\sqrt{\frac{2t}{n}} + \frac{8t \eta \tilde{L} }{3n}\right) \\
 \leq \exp(-t).
\end{align*}
Replacing $t$ by $p \log(p+q)$ we obtain
\begin{align*}
\mathbb{P} \left( \mathbf{Z}_M \geq M \tilde{L} \left( (8C_0 + \sqrt{2}) \sqrt{\frac{\log(p+q)}{nq}} + 8C_0 \sqrt{\log (q+1)} \frac{\log(p+q)}{n} \right) \right. \\
\left. + \frac{8 \eta \tilde{L} p \log(p+q)}{3n} \right) \leq \exp(-p \log(p+q)).
\end{align*}

For the second claim we proceed similarly. We have
\begin{align*}
&\mathbb{E} \left[ \underset{B' \in \mathcal{B} : \underline{\Omega}(B' - B) \leq M}{\sup} \left\vert \left[ R_n \left(B' \right) - R \left(B' \right) \right] - \left[R_n \left(B \right) - R\left( B \right) \right] \right\vert \right] \\
&=\mathbb{E} \left[ \underset{B' \in \mathcal{B} : \underline{\Omega}(B' - B) \leq M}{\sup} \left\vert \left[\frac{1}{n} \sum_{i = 1}^n \left\lbrace \rho \left(X_i B', Y_i \right)  - \mathbb{E} \rho \left( X_i B', Y_i \right) \right\rbrace  \right] \right. \right. \\
&\left.\left. \phantom{\mathbb{E} \dots \dots \dots}- \left[\frac{1}{n} \sum_{i = 1}^n \left\lbrace \rho \left(X_iB, Y_i \right) - \mathbb{E}\rho \left( X_i B, Y_i \right) \right\rbrace \right] \right\vert \right] \\
&= \mathbb{E} \left[ \underset{B' \in \mathcal{B} : \underline{\Omega}(B' - B) \leq M}{\sup} \left\vert \left[\frac{1}{n} \sum_{i = 1}^n \left\lbrace \rho \left(X_i B', Y_i \right)  - \rho \left(X_i B, Y_i \right) \right\rbrace  \right] \right. \right. \\ 
&\left.\left. \phantom{\mathbb{E} \dots \dots \dots}- \left[\frac{1}{n} \sum_{i = 1}^n \left\lbrace  \mathbb{E} \rho \left( X_i B', Y_i\right)  - \mathbb{E}\rho \left( X_i B, Y_i \right) \right\rbrace \right] \right\vert \right] \\
&\leq 2 \mathbb{E} \left[ \underset{B' \in \mathcal{B} : \underline{\Omega}(B' - B) \leq M}{\sup} \left\vert \frac{1}{n} \sum_{i=1}^n \tilde{\varepsilon}_i \left(\rho \left(X_i B', Y_i \right) - \rho \left( X_i B, Y_i \right) \right) \right\vert \right]\\
&\leq 4 \mathbb{E} \left[\underset{B' \in \mathcal{B} : \underline{\Omega}(B' - B) \leq M}{\sup} \left\vert \frac{1}{n} \sum_{i=1}^n L \tilde{\varepsilon}_i \trace \left(X_i \left(B'-B \right) \right) \right\vert  \right] \\
&= 4L \mathbb{E} \left[ \underset{B' \in \mathcal{B} : \underline{\Omega}(B' - B) \leq M}{\sup} \left\vert \trace \left( \left\lbrace \frac{1}{n} \sum_{i =1}^n \tilde{\varepsilon}_i X_i \right\rbrace \left\lbrace B' -B \right\rbrace \right) \right\vert \right] \\
&\leq 4 L\mathbb{E}  \underline{\Omega}_* \left( \frac{1}{n} \sum_{i=1}^n \tilde{\varepsilon}_i X_i \right) \underline{\Omega} \left( B'-B \right), \ \mbox{by the dual norm inequality}  \\
&\leq 4L M \mathbb{E}  \Lambda_{\max} \left( \frac{1}{n} \sum_{i=1}^n \tilde{\varepsilon}_i X_i \right) \\
&\leq 4 C_0L M \left( \sqrt{\frac{\log(p+q)}{nq}} + \sqrt{\log\left( q+1\right) } \frac{\log(p+q)}{n} \right), \ \mbox{as before.}
\end{align*}
The first inequality follows from Theorem \ref{thm:symmetrization}, the second from Theorem \ref{thm:contraction}. Moreover, we have for all $i = 1, \dots, n$ that
\begin{align*}
\left\vert \rho(Y_i - \trace(X_i B')) - \rho(Y_i - \trace(X_i B)) \right\vert \leq L \left\vert \trace \left(X_i \left(B'-B \right) \right) \right\vert \\
\leq 2 \eta L =: D.
\end{align*}
In view of applying Bousquet's Concentration Theorem \ref{thm:bousquet} we have with a similar calculation as for the first claim
\begin{align*}
&\mathbb{E}\left[\left( \rho( Y_i - \trace ( X_i B') ) - \rho( Y_i - \trace(X_i B) ) \right)^2 \right] \\
&\leq L^2 \mathbb{E} \left[\trace^2 \left(X_i(B-B') \right) \right] \\
&\leq \frac{M^2 L^2}{pq} =: T^2
\end{align*}
Replacing $t$ by $p \log(p+q)$ we obtain 
\begin{align*}
\mathbb{P} \left(\mathbf{Z}_M \geq  8 C_0 L M \left( \sqrt{\frac{\log(p+q)}{nq}} + \sqrt{\log\left(q+1\right) } \frac{\log(p+q)}{n}\right) \right. \\
\left. +\frac{ML}{\sqrt{pq}} \sqrt{\frac{2 p \log(p+q)}{n}} + \tilde{\lambda}_{2} \right) \\
 \leq \exp \left(- p \log(p+q) \right).
\end{align*}
\end{proof}

To obtain a uniform bound for all $B \in \mathcal{B}$ we use the peeling device given in \cite{van2000applications}.
\begin{lemma}
\label{lemma:peeling}
\begin{enumerate}
\item Let $\tilde{L}$ and $\tilde{\lambda}_1$ be as in Lemma \ref{lemma:lipschitzdiff}. Define
\begin{equation*}
\lambda_{\varepsilon,1} = \tilde{L} \left((8C_0 + \sqrt{2}) \sqrt{\frac{\log(p+q)}{nq}} + 8C_0 \sqrt{\log(1+q)} \frac{\log(p+q)}{n}\right)
\end{equation*}
	 We have for any fixed $B \in \mathcal{B}$
\begin{align*}
&\mathbb{P} \Biggl(\exists B' \in \mathcal{B} : \left\vert \trace \left( \left(\dot{R}_n(B') - \dot{R}(B') \right)^T \left( B' - B \right) \right) \right\vert  \\
&\phantom{\dots \dots \dots }> 2 \lambda_{\varepsilon,1}( \underline{\Omega}(B'-B) + 1) + \tilde{\lambda}_{1} \Biggr)\\
&\leq (j_0+2) \exp(-p \log(p+q)).
\end{align*}
	\item Let $L$ and $\tilde{\lambda}_2$ be as in Lemma \ref{lemma:lipschitzdiff}. Define
	\begin{equation*}
\lambda_{\varepsilon,2} = L \left((8C_0 + \sqrt{2}) \sqrt{\frac{\log(p+q)}{nq}} + 8C_0 \sqrt{\log(1+q)} \frac{\log(p+q)}{n}\right)
\end{equation*}
	 We have for any fixed $B \in \mathcal{B}$
	\begin{align*}
	&\mathbb{P} (\exists B' \in \mathcal{B} : \left\vert (R_n(B') - R(B')) - (R_n(B) - R(B)) \right\vert   \\
	& \phantom{\dots \dots \dots \dots}> 2 \lambda_{\varepsilon,2}(\underline{\Omega}(B'-B) + 1)+ \tilde{\lambda}_{\textcolor{red}{2}} ) \\
	&\leq (j_0 + 2) \exp(-p\log(p+q)).
	\end{align*}
\end{enumerate}
\end{lemma}

\begin{proof}
We subdivide the set $\mathcal{B}$ as follows for a fixed $B \in \mathcal{B}$
\begin{align*}
\mathcal{B} &= \left\lbrace B' \in \mathcal{B} : \underline{\Omega} ( B-B') \leq 1 \right\rbrace \\
&\phantom{\dots \dots}\cup \left\lbrace B' \in \mathcal{B} : 1 < \underline{\Omega}(B- B') \leq 14 q \sqrt{pq} \eta \right\rbrace.
\end{align*}

Suppose we are on the first set, then
\begin{align*}
&\left\lbrace \exists B' : \underline{\Omega}(B-B') \leq 1, \left\vert \trace \left( (\dot{R}_n(B') - \dot{R}(B'))^T(B-B') \right) \right\vert \right.\\
 &\phantom{\dots \dots \dots \dots \dots \dots \dots \dots}\left. > 2 \lambda_{\varepsilon,1} (\underline{\Omega}(B-B') +1) + \tilde{\lambda}_{1} \right\rbrace \\
 &\subset \left\lbrace \exists B' : \underline{\Omega} (B-B') \leq 1, \left\vert \trace \left( (\dot{R}_n(B') - \dot{R}(B'))^T(B-B') \right) \right\vert \right. \\
 &\phantom{\lbrace \exists B' : \leq 1, \left\vert \trace \left( (\dot{R}_n(B') - \dot{R}(B'))^T(B-B') \right) \right\vert}\left. > \lambda_{\varepsilon,1} + \tilde{\lambda}_{1} \right\rbrace.
\end{align*}
Therefore, by Lemma \ref{lemma:lipschitzdiff} we conclude that
\begin{align*}
\mathbb{P} \left( \exists B' : \underline{\Omega}(B-B') \leq 1, \left\vert \trace \left( (\dot{R}_n(B')- \dot{R}(B'))^T(B-B') \right) \right\vert \right.  \\
\left. > 2 \lambda_{\varepsilon,1}(\underline{\Omega}(B-B') + 1) + \tilde{\lambda}_{1} \right) \\
\leq \exp(-p \log(p+q)).
\end{align*}
We then consider the set $\left\lbrace B' \in \mathcal{B} : 1< \underline{\Omega}(B-B') \leq 14 q \sqrt{pq} \eta \right\rbrace$. We first refine it by choosing $j_0$ as the smallest integer such that $j_0 + 1 > \log_2 (14 q \sqrt{pq} \eta)$. This leads us to
\begin{align*}
&\left\lbrace B' \in \mathcal{B} : 1< \underline{\Omega}(B-B') \leq 14 q \sqrt{pq} \eta \right\rbrace \\
&\subset \bigcup_{j = 0}^{j_0} \underbrace{\left\lbrace B' : 2^j <\underline{\Omega}(B-B') \leq 2^{j+1} \right\rbrace}_{=: \mathcal{B}_j}.
\end{align*}
For one $j$ we have for the event
\begin{align*}
\left\lbrace \exists B' \in \mathcal{B}_j : \left\vert \trace \left( (\dot{R}_n(B') - \dot{R}(B'))^T(B'-B) \right) \right\vert \right. \\
\left. > 2 \lambda_{\varepsilon,1} (\underline{\Omega}(B-B') +1 ) + \tilde{\lambda}_{\textcolor{red}{1}} \right\rbrace \\
\subset \left\lbrace \exists B' \in \mathcal{B}_j : \left\vert \trace \left( (\dot{R}_n(B') - \dot{R}(B'))^T(B'-B) \right) \right\vert > 2^{j+1} \lambda_{\varepsilon,1} + \tilde{\lambda}_{\textcolor{red}{1}} \right\rbrace.
\end{align*}
By the first claim in Lemma \ref{lemma:lipschitzdiff} we can conclude that
\begin{align*}
\mathbb{P} \left(\exists B' \in \mathcal{B}_j :  \left\vert \trace \left( (\dot{R}_n(B') - \dot{R}(B'))^T(B'-B) \right) \right\vert \right. \\
\left. > 2 \lambda_{\varepsilon,1} (\underline{\Omega}(B-B') + 1) + \tilde{\lambda}_{\textcolor{red}{1}} \right)\\
\leq \exp(-p \log(p+q)).
\end{align*}
To keep the notation clean we define the event
\begin{align*}
&\mathcal{C} = \Big\lbrace \exists B' \in \mathcal{B}: 1 < \underline{\Omega}(B-B') \leq 14 q \sqrt{pq} \eta,  \\ 
&\phantom{\dots}\left\vert \trace \left((\dot{R}_n(B') - \dot{R}(B'))^T(B-B') \right) \right\vert  > 2 \lambda_{\varepsilon,1}(\underline{\Omega}(B-B') +1) + \tilde{\lambda}_{1}  \Big\rbrace
\end{align*}
and  for all $j = 0, \dots, j_0$ the events
\begin{align*}
&\mathcal{C}_j = \left\lbrace \exists B' \in \mathcal{B}_j : \left\vert \trace \left( (\dot{R}_n(B') - \dot{R}(B'))^T(B' - B) \right) \right\vert \right. \\
&\phantom{\dots \dots \dots \dots \dots \dots \dots \dots}\left. > 2^{j+1} \lambda_{\varepsilon,1} + \tilde{\lambda}_{1} \right\rbrace.
\end{align*}
Then $\mathcal{C} \subset \bigcup_{j=0}^{j_0} \mathcal{C}_j$.
Therefore, by the union bound
\begin{align*}
\mathbb{P} \left( \mathcal{C} \right) \leq \mathbb{P} \left( \bigcup_{j=0}^{j_0} \mathcal{C}_j \right) \leq \sum_{j=0}^{j_0} \mathbb{P} \left( \mathcal{C}_j \right) \leq (j_0+1) \exp(-p \log(p+q)).
\end{align*}
The second claim follows by an analogous reasoning.
\end{proof}


The proof of Lemma \ref{lemma:lipschitzdiff} requires the Symmetrization Theorem and the Contraction Theorem (or Contraction Principle). The first theorem allows us to reduce the case of non-centred random variables to the case of mean zero random variables. A main tool for this type of reduction is a sequence of Rademacher random variables. The second theorem, the Contraction Principle, is used to compare the limit behaviour of two series of Rademacher random variables with different coefficients. We state these results for the sake of completeness.

\begin{theorem}[Symmetrization of Expectations, \cite{van1996weak}]
\label{thm:symmetrization}
Consider $X_1, \dots, X_n$ independent matrices in $\chi$ and let $\mathcal{F}$ be a class of real-valued functions on $\chi$. Let $\tilde{\varepsilon}_1, \dots, \tilde{\varepsilon}_n$ be a Rademacher sequence independent of $X_1, \dots, X_n$, then
\begin{equation}
\mathbb{E} \left[\underset{f \in \mathcal{F}}{\sup} \ \left\vert \sum_{i=1}^n \left( f(X_i) - \mathbb{E} f (X_i) \right) \right\vert \right] \leq 2 \mathbb{E} \left[\underset{f \in \mathcal{F}}{\sup} \ \left\vert \sum_{i=1}^n \tilde{\varepsilon}_i f(X_i) \right\vert \right].
\end{equation}
\end{theorem}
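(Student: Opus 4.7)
The plan is to prove this via the classical symmetrization-by-an-independent-copy argument, which is essentially a three-step chain: introduce a ghost sample, use Jensen's inequality to pull the ghost expectation outside the supremum, then insert Rademacher signs using the symmetry of the centered differences.

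First, I would introduce an independent copy $X_1', \dots, X_n'$ of $X_1, \dots, X_n$, independent of both the original sample and of the Rademacher sequence $\tilde{\varepsilon}_1, \dots, \tilde{\varepsilon}_n$. Writing $\mathbb{E} f(X_i) = \mathbb{E}_{X'} f(X_i')$, I would obtain
\begin{equation*}
\mathbb{E}\Big[\sup_{f \in \mathcal{F}} \Big|\sum_{i=1}^n (f(X_i) - \mathbb{E} f(X_i))\Big|\Big] = \mathbb{E}_X\Big[\sup_{f \in \mathcal{F}} \Big|\mathbb{E}_{X'}\sum_{i=1}^n (f(X_i) - f(X_i'))\Big|\Big].
\end{equation*}
Next, since $|\cdot|$ and $\sup_{f \in \mathcal{F}}$ are both convex operations (the supremum of linear functionals in the integrand), applying Jensen's inequality to pass $\mathbb{E}_{X'}$ through both yields
\begin{equation*}
\le \mathbb{E}_{X,X'}\Big[\sup_{f \in \mathcal{F}} \Big|\sum_{i=1}^n (f(X_i) - f(X_i'))\Big|\Big].
\end{equation*}

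Second, I would exploit symmetry. Because $X_i$ and $X_i'$ are i.i.d., the random variable $Z_i := f(X_i) - f(X_i')$ is symmetric, so the joint law of $(Z_1, \dots, Z_n)$ coincides with that of $(\tilde{\varepsilon}_1 Z_1, \dots, \tilde{\varepsilon}_n Z_n)$ for any sign pattern, and in particular when the signs are independent Rademacher variables. Hence the last display equals
\begin{equation*}
\mathbb{E}\Big[\sup_{f \in \mathcal{F}} \Big|\sum_{i=1}^n \tilde{\varepsilon}_i (f(X_i) - f(X_i'))\Big|\Big].
\end{equation*}

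Finally, I would apply the triangle inequality inside the supremum to split this into two terms, each of which has the same distribution (since $X$ and $X'$ are identically distributed and independent of the Rademacher sequence), giving the bound $2\,\mathbb{E}[\sup_{f \in \mathcal{F}}|\sum_i \tilde{\varepsilon}_i f(X_i)|]$. There is no real obstacle here; the only subtlety is ensuring measurability of the suprema so that Fubini/Jensen apply rigorously, which is standard once $\mathcal{F}$ is assumed (implicitly) pointwise measurable or one interprets $\mathbb{E}$ as outer expectation in the sense of van der Vaart and Wellner.
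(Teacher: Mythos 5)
Your argument is the standard ghost-sample symmetrization proof (independent copy, Jensen, sign-flip by the swap $X_i \leftrightarrow X_i'$, triangle inequality) and it is correct; the only point to state a bit more carefully is that the sign symmetry must hold for the process jointly in $f \in \mathcal{F}$, which follows because swapping $X_i$ and $X_i'$ flips the sign of $f(X_i)-f(X_i')$ for all $f$ simultaneously while preserving the joint law. The paper does not prove this statement at all --- it is quoted from van der Vaart and Wellner ``for the sake of completeness'' --- and your proof is precisely the classical one given in that reference.
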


\begin{theorem}[Contraction Theorem, \cite{ledoux1991probability}]
\label{thm:contraction}
Consider the non-random elements $x_1, \dots, x_n$ of $\chi$. Let $\mathcal{F}$ be a class of real-valued functions on $\chi$. Consider the Lipschitz continuous functions $\rho_i : \mathbb{R} \rightarrow \mathbb{R}$ with Lipschitz constant $L$, i.e.
\begin{equation*}
\left\vert \rho_i (\mu) - \rho_i (\tilde{\mu}) \right\vert \leq L \left\vert \mu- \tilde{\mu} \right\vert, \ \mbox{for all} \ \mu, \tilde{\mu} \in \mathbb{R}.
\end{equation*}
Let $\tilde{\varepsilon}_1, \dots, \tilde{\varepsilon}_n$ be a Rademacher sequence . Then for any function $f^* : \chi \rightarrow \mathbb{R}$, we have
\begin{equation}
\mathbb{E} \left[\underset{f \in \mathcal{F}}{\sup}  \left\vert \sum_{i =1}^n \tilde{\varepsilon}_i \left\lbrace \rho_i (f(x_i)) - \rho_i (f^*(x_i)) \right\rbrace \right\vert \right] \leq 2 \mathbb{E} \left[ L \underset{f \in \mathcal{F}}{\sup}  \left\vert \sum_{i =1}^n \tilde{\varepsilon}_i \left(f(x_i) - f^*(x_i) \right) \right\vert \right].
\end{equation}
\end{theorem}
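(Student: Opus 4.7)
The plan is to prove this classical Ledoux--Talagrand contraction inequality in two stages: a symmetrization step that removes the outer absolute value (and accounts for the factor $2$), followed by an induction on $n$ driven by a one-variable contraction lemma. As a preliminary reduction, I would rescale so that $L=1$ (replacing each $\rho_i$ by $\rho_i/L$) and subtract the constants $\rho_i(f^*(x_i))$ to obtain new $1$-Lipschitz functions $\tilde{\rho}_i$ with $\tilde{\rho}_i(f^*(x_i))=0$; this does not affect the left-hand side and simplifies the bookkeeping.

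To peel off the absolute value, set $Z_f:=\sum_{i=1}^{n}\tilde{\varepsilon}_i\tilde{\rho}_i(f(x_i))$. Since $Z_{f^*}=0$, both $\sup_f Z_f$ and $\sup_f(-Z_f)$ are nonnegative, so $\sup_f|Z_f|\le \sup_f Z_f+\sup_f(-Z_f)$. Distributional symmetry of the Rademacher sequence yields $\mathbb{E}\sup_f(-Z_f)=\mathbb{E}\sup_f Z_f$, whence $\mathbb{E}\sup_f|Z_f|\le 2\,\mathbb{E}\sup_f Z_f$. It will then suffice to show the one-sided inequality
\[
\mathbb{E}\sup_{f\in\mathcal F}\sum_{i=1}^{n}\tilde{\varepsilon}_i\tilde{\rho}_i(f(x_i))\;\le\;\mathbb{E}\sup_{f\in\mathcal F}\sum_{i=1}^{n}\tilde{\varepsilon}_i\bigl(f(x_i)-f^*(x_i)\bigr),
\]
since the right-hand side is dominated by the expected sup of its absolute value and the factor $L$ can be reinstated by undoing the rescaling.

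For the one-sided bound I would induct on the number of coordinates that have not yet been ``linearized,'' peeling one off at a time. Conditioning on all Rademacher variables except $\tilde{\varepsilon}_i$ and averaging over its two signs reduces the inductive step to the following one-variable lemma: for every functional $\Psi:\mathcal F\to\mathbb R$, every $x\in\chi$, and every $1$-Lipschitz $\phi:\mathbb R\to\mathbb R$,
\[
\sup_{f}\bigl[\Psi(f)+\phi(f(x))\bigr]+\sup_{f}\bigl[\Psi(f)-\phi(f(x))\bigr]\;\le\;\sup_{f}\bigl[\Psi(f)+f(x)\bigr]+\sup_{f}\bigl[\Psi(f)-f(x)\bigr].
\]
To verify it, I would take near-maximizers $f_+,f_-\in\mathcal F$ of the two suprema on the left; the Lipschitz inequality gives
\[
\Psi(f_+)+\phi(f_+(x))+\Psi(f_-)-\phi(f_-(x))\le \Psi(f_+)+\Psi(f_-)+|f_+(x)-f_-(x)|,
\]
and a case split on the sign of $f_+(x)-f_-(x)$ assigns the $\pm f(x)$ terms to the two suprema on the right, matching the claimed upper bound in either case. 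Iterating the lemma for $i=1,\dots,n$ replaces every $\tilde{\rho}_i$ by the identity map and produces the one-sided inequality.

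The main obstacle will be this one-variable contraction lemma: it is short but delicate because $\mathcal F$ is not assumed convex or symmetric, so the sign case distinction is genuinely needed and cannot be bypassed by a purely symbolic manipulation. The remaining pieces---the Rademacher symmetrization that produces the constant $2$ and the inductive peeling of the coordinates---are routine once this one-variable step is in hand.
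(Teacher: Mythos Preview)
The paper does not prove this theorem; it is quoted from \cite{ledoux1991probability} as a tool and invoked without proof in the empirical-process bounds (Lemma~\ref{lemma:lipschitzdiff}). Your outline reproduces the classical Ledoux--Talagrand argument and is correct: the reduction to $L=1$ and to contractions vanishing at $f^*(x_i)$, the Rademacher symmetry step producing the factor~$2$, and the coordinate-by-coordinate induction via the one-variable lemma are exactly the standard route, and your sketch of the one-variable lemma (near-maximizers plus a sign case split) is the usual proof.

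One small point to tidy: your inequality $\sup_f|Z_f|\le \sup_f Z_f+\sup_f(-Z_f)$ relies on both one-sided suprema being nonnegative, which you deduce from $Z_{f^*}=0$; this tacitly assumes $f^*\in\mathcal{F}$, whereas the statement allows an arbitrary $f^*:\chi\to\mathbb{R}$. The fix is immediate: after your shift to $\tilde\rho_i$ both sides of the claimed inequality vanish at $f=f^*$, so adjoining $f^*$ to $\mathcal{F}$ changes neither side, and with $f^*\in\mathcal{F}$ your argument goes through verbatim.
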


\begin{theorem}[Bousquet's Concentration Theorem, Bousquet (2002)]
	\label{thm:bousquet}
	Suppose that for all $i = 1, \dots, n$ and for all $f \in \mathcal{F}$
	\begin{equation*}
	\mathbb{E} \left[ f(X_i) \right] = 0, \frac{1}{n} \sum_{i=1}^n \underset{f \in \mathcal F}{\sup} \ \mathbb{E} \left[ f^2(X_i) \right] \leq T^2.
	\end{equation*}
	Assume further for a constant $D > 0$ and for all $f \in \mathcal{F}$ that
	\begin{equation*}
	\Vert f \Vert_\infty \leq D.
	\end{equation*}
	Define
	\begin{equation*}
	\mathbf{Z} := \underset{f \in \mathcal{F}}{\sup} \ \left\vert \frac{1}{n} \sum_{i=1}^n \left( f(X_i) - \mathbb{E} \left[ f(X_i) \right] \right) \right\vert.
	\end{equation*}
	Then we have for all $t>0$
	\begin{equation*}
	\mathbb{P} \left( \mathbf{Z} \geq 2 \mathbb{E} \mathbf{Z} + T \sqrt{\frac{2t}{n}} + \frac{4 t D}{3n} \right) \leq \exp (-t).
	\end{equation*}
\end{theorem}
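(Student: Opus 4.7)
This statement is Bousquet's concentration inequality for empirical processes, which is classical and cited as Proposition from Bousquet (2002), so the authors simply invoke it. Nonetheless, here is how I would approach proving it from scratch. The plan is to use the entropy method (Ledoux's modified log-Sobolev approach specialized to suprema of empirical processes), derive a differential inequality for the log-moment generating function of $n\mathbf{Z}$, and conclude by Chernoff's bound.

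\textbf{Step 1: Rescale and symmetrize the object.} First I would work with the non-normalized statistic $W := n\mathbf{Z} = \sup_{f \in \mathcal{F}} \bigl| \sum_{i=1}^n (f(X_i) - \mathbb{E}f(X_i)) \bigr|$. One shows the inequality for $W$ with variance proxy $v := nT^2 + 2D\,\mathbb{E}W$ and then divides by $n$ at the end. To handle the absolute value, a standard device is to replace $\mathcal{F}$ by $\mathcal{F} \cup (-\mathcal{F})$, which preserves the hypotheses, so that the supremum without absolute value suffices.

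\textbf{Step 2: Entropy inequality.} The workhorse is the tensorization inequality for entropy combined with a Bernstein-type bound on the conditional entropy. Concretely, with $H(\lambda) := \log \mathbb{E}[e^{\lambda(W - \mathbb{E}W)}]$, one establishes the Bousquet--Ledoux functional inequality
\begin{equation*}
\lambda H'(\lambda) - H(\lambda) \;\leq\; v\bigl(e^{\lambda} - 1 - \lambda\bigr) + \lambda D\,H'(\lambda),
\end{equation*}
or equivalently an upper bound of the form $H(\lambda) \leq v\phi(\lambda)$ valid for $0 \leq \lambda < 1/D$, where $\phi(\lambda) = e^\lambda - \lambda - 1$. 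The key identity used is that for $W$ a supremum of sums of independent centered contributions, the ``discrete gradient'' $W - W^{(i)}$ (where $W^{(i)}$ removes the $i$-th coordinate and takes the supremum at the same maximizer) satisfies $|W - W^{(i)}| \leq 2D$ and $\sum_i \mathbb{E}(W - W^{(i)})^2 \leq v$ by the variance bound $T^2$ and the boundedness assumption.

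\textbf{Step 3: Chernoff optimization.} Once $H(\lambda) \leq v\phi(\lambda)$ is available, Markov's inequality gives, for any $u > 0$ and $\lambda \in [0, 1/D)$,
\begin{equation*}
\mathbb{P}(W - \mathbb{E}W \geq u) \;\leq\; \exp\bigl(-\lambda u + v\phi(\lambda)\bigr).
\end{equation*}
Optimizing over $\lambda$ yields the Bennett-type bound, which in turn is upper-bounded by the Bernstein-type bound $\exp\bigl(-u^2/(2v + 2Du/3)\bigr)$. Inverting this tail bound in $u$, i.e.\ setting the tail equal to $e^{-t}$, gives $u \leq \sqrt{2vt} + \tfrac{2Dt}{3}$. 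Substituting $v = nT^2 + 2D\mathbb{E}W$ and using $\sqrt{a+b} \leq \sqrt{a} + \sqrt{b}$ together with $2\sqrt{ab} \leq a + b$ to separate the $\mathbb{E}W$ contribution produces $W - \mathbb{E}W \leq \mathbb{E}W + T\sqrt{2nt} + \tfrac{4Dt}{3}$, i.e.\ $W \leq 2\mathbb{E}W + T\sqrt{2nt} + 4Dt/3$. Dividing by $n$ yields exactly the stated inequality for $\mathbf{Z}$.

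\textbf{Main obstacle.} The delicate step is Step~2: proving the modified log-Sobolev / entropy inequality for the supremum $W$ with the sharp constants (in particular, obtaining the factor $2$ in front of $D\,\mathbb{E}W$ inside $v$ rather than a larger constant). Earlier versions of such inequalities (Ledoux, Massart, Rio) gave weaker constants; Bousquet's contribution was precisely to sharpen them using a careful symmetric/asymmetric entropy argument. In a full proof I would invoke either the Ledoux tensorization of entropy plus the ``Bernstein for entropy'' lemma (i.e.\ $\mathrm{Ent}(e^{\lambda Y}) \leq \lambda^2 e^{\lambda c} \mathbb{E}[Y^2 e^{\lambda Y}]$ for $Y \leq c$), or the direct induction on $n$ due to Bousquet, and verify that the hypotheses on $T^2$ and $D$ transfer through the tensorization.
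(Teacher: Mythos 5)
The paper does not prove this statement: it is imported verbatim from \cite{bousquet2002bennett} and stated in the supplement ``for the sake of completeness,'' so the only justification the paper offers is the citation. Your opening observation therefore matches the paper's treatment exactly, and everything beyond that is extra credit. Your sketch is indeed the standard route to Bousquet's inequality (tensorization of entropy plus a Bernstein-type bound on the conditional entropy, yielding a differential inequality for the log-moment generating function, then Chernoff), and you correctly identify Step~2 as the genuinely hard part where Bousquet's sharp constants are earned. One detail in Step~3 deserves care: Bousquet's theorem is usually stated in Bennett form, $\mathbb{P}\bigl(W \geq \mathbb{E}W + \sqrt{2vt} + Dt/3\bigr) \leq e^{-t}$ with $v = nT^2 + 2D\,\mathbb{E}W$, and it is this $Dt/3$ term that combines with the $Dt$ produced by splitting $\sqrt{2vt}$ via $2\sqrt{ab}\leq a+b$ to give the stated $4Dt/3$. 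If you instead first pass to the Bernstein-form tail $\exp\bigl(-u^2/(2v+2Du/3)\bigr)$ and invert, you get $u \leq \sqrt{2vt} + 2Dt/3$, and the same splitting then yields $5Dt/3$, which is weaker than the constant claimed in the theorem. So to recover the exact statement you must invert the Bennett bound (using $h(x) \geq x^2/(2(1+x/3))$ in the correct direction) rather than the Bernstein relaxation. This is a constant-tracking issue, not a conceptual gap, and since the result is a quoted classical theorem it does not affect the paper.
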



\subsection{On the distribution of the errors}
\label{ss:distoferrors}

In this section we discuss the consequences arising from the assumption that the distribution of the errors is symmetric around $0$. Let $F$ be the distribution function of the errors. For purposes of illustration, we discuss the location model. The case of low-rank matrix estimation then follows easily. The location model is as follows
\begin{equation}
X = \mu^* + \varepsilon,
\end{equation}
where $\mu^*$ is some fixed real number such that $\vert \mu^* \vert \leq \eta$ and $\varepsilon$ is additive noise with symmetric (around $0$) distribution.
\begin{lemma}
Assume that $f$ is the density with respect to Lebesgue measure of the errors. Suppose further that $f(u) > 0$ for all $\vert u \vert \leq 2 \eta$. Define
\begin{equation}
\mu^0_1 = \underset{\vert \mu \vert \leq \eta}{\arg \min} \ \mathbb{E} \left[ \vert X - \mu \vert \right].
\end{equation}
Then,
\begin{equation}
\mu^* = \mu^0_1.
\end{equation}
\end{lemma}
\begin{proof}
We first notice that $\mu^0_1$ is the median of the distribution of $X$. Since the distribution is continuous and the density positive everywhere the median is unique. \\
Since the distribution of the errors is symmetric around $0$ the distribution of $X$ is symmetric around $\mu^*$. This implies that $\mu^*$ must be the median. But since the median is unique we must have $\mu^*= \mu^0_1$.
\end{proof}

\begin{lemma}
Assume that the distribution function of the errors satisfies
\begin{equation}
F(u+\kappa) - F(u - \kappa) \geq 1/C_1^2 \ \text{for all} \ \vert u \vert \leq 2\eta \ \text{and} \ \kappa \leq 2 \eta.
\end{equation}
Define
\begin{equation}
\mu^0_2 = \underset{\vert \mu \vert \leq \eta}{\arg \min} \ \mathbb{E} \left[ \rho_H (X - \mu) \right],
\end{equation}
where $\rho_H$ is the Huber loss as defined in the main text. Then,
\begin{equation}
\mu^* = \mu^0_2.
\end{equation}
\end{lemma}
\begin{proof}
We notice that the first derivative of the Huber loss evaluated in $X - \mu^0_2$ is given by
\begin{equation}
\dot{\rho}_H(X - \mu^0_2) = \left\lbrace \begin{array}{ll}
-2(X - \mu^0_2), & \text{if} \ \vert X - \mu^0_2 \vert \leq \kappa \\
-2 \kappa, & \text{if} \ X - \mu^0_2 > \kappa \\
2 \kappa, & \text{if} \ X - \mu^0_2 < - \kappa.
\end{array} \right.
\end{equation}
By straightforward computation 
\begin{align*}
\mathbb{E} \left[ \dot{\rho}_H (X - \mu_2^0) \right] &= 2 \int_{- \kappa + \mu^0_2 - \mu^*}^{\kappa + \mu^0_2 - \mu^*} F(\varepsilon) d \varepsilon - 2 \kappa = 0.
\end{align*}
We now use the symmetry of the distribution of the errors. This translates to
\begin{equation}
F(\varepsilon) = 1 - F(-\varepsilon).
\end{equation}
This implies
\begin{align*}
\int_{- \kappa + \mu^* - \mu_2^0}^{\kappa + \mu^* - \mu_2^0} F(\tilde{\varepsilon}) d \tilde{\varepsilon} = \kappa.
\end{align*}
On the other hand, we also have that
\begin{align*}
\int_{- \kappa + \mu_2^0 - \mu^*}^{\kappa + \mu^0_2 - \mu^*} F(\varepsilon) d\varepsilon = \kappa.
\end{align*}
By changing the variables in the previous integrals we arrive at
\begin{align*}
\int_{- \kappa}^{\kappa} F(\varepsilon + \mu_2^0 - \mu^*) - F(\varepsilon + \mu^* - \mu^0_2) d \varepsilon = 0.
\end{align*}
The previous integral is always larger than $0$ by assumption unless $\mu^* = \mu^0_2$.
\end{proof}

\subsection{Proof of Lemma 4.1 in the main text}

\begin{proof}[Proof of Lemma 4.1 in the main text]
	The proof of this lemma is analogous to the first part of the proof of Corollary 2 in \cite{negahban2012restricted}.
	Take $\sigma > 0$ such that $$s= \max \left\lbrace k \in \left\lbrace 1,\dots,q \right\rbrace \vert \Lambda_k^0 > \sigma \right\rbrace.$$ Then, we have
	\begin{align*}
	\left\Vert B^- \right\Vert_{\mbox{nuclear}} &= \sum_{k=s+1}^q \Lambda_k\\
	&= \sigma \sum_{k=s+1}^q \frac{\Lambda_k}{\sigma} \\
	&\leq \sigma \sum_{k=s+1}^q \left( \frac{\Lambda_k}{\sigma} \right)^r \\
	&\leq \sigma^{1-r} \rho_r^r.
	\end{align*}
	Moreover, we have
	\begin{equation*}
	s \sigma^r = \sigma^r \sum_{k =1}^s \mathbbm{1}_{\left\lbrace \Lambda_k^0> \sigma \right\rbrace} = \sigma^r \sum_{k=1}^s \mathbbm{1}_{\left\lbrace \left( \frac{\Lambda_k^0}{\sigma} \right)^r > 1 \right\rbrace} \leq \sigma^r \sum_{k=1}^s \left(\frac{\Lambda_k^0}{\sigma} \right)^r \leq \rho_r^r,
	\end{equation*}
	where $\Lambda_k^0$ denotes a singular value of the matrix $B^0$.
	Therefore we obtain
	\begin{equation*}
	s \leq \sigma^{-r} \rho_r^r.
	\end{equation*}
\end{proof}

\bibliographystyle{plainnat}
\bibliography{myreferences}

\begin{thebibliography}{24}
\providecommand{\natexlab}[1]{#1}
\providecommand{\url}[1]{\texttt{#1}}
\expandafter\ifx\csname urlstyle\endcsname\relax
  \providecommand{\doi}[1]{doi: #1}\else
  \providecommand{\doi}{doi: \begingroup \urlstyle{rm}\Url}\fi



\bibitem[\protect\citeauthoryear{Beck and Teboulle}{2009}]{beck2009gradient}
\begin{barticle}[author]
\bauthor{\bsnm{Beck},~\bfnm{Amir}\binits{A.}} \AND
\bauthor{\bsnm{Teboulle},~\bfnm{Marc}\binits{M.}}
(\byear{2009}).
\btitle{Gradient-based algorithms with applications to signal recovery problems}.
\bjournal{Convex optimization in signal processing and communications}.
\bpages{42--88}.
\MR{2767564}
\end{barticle}
\endbibitem


\bibitem[\protect\citeauthoryear{Bousquet}{2002}]{bousquet2002bennett}
\begin{barticle}[author]
\bauthor{\bsnm{Bousquet},~\bfnm{Olivier}\binits{O.}}
(\byear{2002}).
\btitle{A {B}ennett concentration inequality and its application to suprema of
  empirical processes}.
\bjournal{C. R. Math. Acad. Sci. Paris}
\bvolume{334}
\bpages{495--500}.
\MR{1890640}
\end{barticle}
\endbibitem


\bibitem[\protect\citeauthoryear{B{\"u}hlmann and van~de
  Geer}{2011}]{buhlmann2011statistics}
\begin{bbook}[author]
\bauthor{\bsnm{B{\"u}hlmann},~\bfnm{Peter}\binits{P.}} \AND
  \bauthor{\bparticle{van~de} \bsnm{Geer},~\bfnm{Sara}\binits{S.}}
(\byear{2011}).
\btitle{Statistics for High-Dimensional Data: Methods, Theory and
  Applications. Springer Series in Statistics}.
\bpublisher{Springer, Heidelberg}.
\MR{2807761}
\end{bbook}
\endbibitem


  \bibitem[\protect\citeauthoryear{Cambier and Absil}{2016}]{cambier2016robust}
\begin{barticle}[author]
\bauthor{\bsnm{Cambier},~\bfnm{L{\'e}opold}\binits{L.}},
  \bauthor{\bsnm{Absil},~\bfnm{P-A}\binits{P-A.}}(\byear{2016}).
\btitle{ Robust low-rank matrix completion by Riemannian optimization}
\bjournal{SIAM J. Sci. Comput.}
\bvolume{38}
\bpages{S440-S460}.
\MR{3565571}
\end{barticle}
\endbibitem



\bibitem[\protect\citeauthoryear{Cand{\`e}s et~al.}{2011}]{candes2011robust}
\begin{barticle}[author]
\bauthor{\bsnm{Cand{\`e}s},~\bfnm{Emmanuel~J.}\binits{E.~J.}},
  \bauthor{\bsnm{Li},~\bfnm{Xiaodong}\binits{X.}},
  \bauthor{\bsnm{Ma},~\bfnm{Yi}\binits{Y.}} \AND
  \bauthor{\bsnm{Wright},~\bfnm{John}\binits{J.}}
(\byear{2011}).
\btitle{Robust {P}rincipal {C}omponent {A}nalysis?}
\bjournal{J. ACM}
\bvolume{58}
\bpages{11}.
\MR{2811000}
\end{barticle}
\endbibitem



\bibitem[\protect\citeauthoryear{Cand{\`e}s and Plan}{2010}]{candes2010matrix}
\begin{barticle}[author]
\bauthor{\bsnm{Cand{\`e}s},~\bfnm{Emmanuel~J.}\binits{E.~J.}} \AND
  \bauthor{\bsnm{Plan},~\bfnm{Yaniv}\binits{Y.}}
(\byear{2010}).
\btitle{{M}atrix {C}ompletion with {N}oise}
\bjournal{Proceedings of the IEEE}
\bvolume{98}
\bpages{925--936}.
\end{barticle}
\endbibitem


\bibitem[\protect\citeauthoryear{Chandrasekaran et~al.}{2011}]{chandrasekaran2011rank}
\begin{barticle}[author]
\bauthor{\bsnm{Chandrasekaran},~\bfnm{Venkat}\binits{V.}},
  \bauthor{\bsnm{Sanghavi},~\bfnm{Sujay}\binits{S.}},
  \bauthor{\bsnm{Parrilo},~\bfnm{Pablo~A.}\binits{P.~A.}} \AND
  \bauthor{\bsnm{Willsky},~\bfnm{Alan S.}\binits{A.~S.}}
(\byear{2011}).
\btitle{{R}ank-sparsity incoherence for matrix decomposition.}
\bjournal{SIAM J. Optim.}
\bvolume{21}
\bpages{572--596}.
\MR{2817479}
\end{barticle}
\endbibitem



  
  
  \bibitem[\protect\citeauthoryear{Chen et~al.}{2013}]{chen2013low}
\begin{barticle}[author]
\bauthor{\bsnm{Chen},~\bfnm{Yudong}\binits{Y.}},
  \bauthor{\bsnm{Jalali},~\bfnm{A.}\binits{A.}},
  \bauthor{\bsnm{Sanghavi},~\bfnm{S.}\binits{S.}} \AND
  \bauthor{\bsnm{Caramanis},~\bfnm{C.}\binits{C.}}
(\byear{2013}).
\btitle{ {L}ow-rank matrix recovery from errors and erasures.}
\bjournal{IEEE Trans. Inform. Theory}
\bvolume{59}
\bpages{4324--4337}.
\end{barticle}
\endbibitem

  

\bibitem[\protect\citeauthoryear{CVX~Research}{2012}]{cvx}
\begin{bmisc}[author]
\bauthor{\bsnm{CVX~Research},~\bfnm{Inc.}\binits{I.}}
(\byear{2012}).
\btitle{{CVX}: Matlab {S}oftware for {D}isciplined {C}onvex {P}rogramming,
  version 2.0}.
\bhowpublished{\url{http://cvxr.com/cvx}}.
\end{bmisc}
\endbibitem

  
  
  
  \bibitem[\protect\citeauthoryear{Foygel et~al.}{2011}]{foygel2011learning}
\begin{barticle}[author]
\bauthor{\bsnm{Foygel},~\bfnm{Rina}\binits{R.}},
  \bauthor{\bsnm{Shamir},~\bfnm{Ohad}\binits{O.}},
  \bauthor{\bsnm{Srebro},~\bfnm{Nati}\binits{N.}} \AND
  \bauthor{\bsnm{Salakhutdinov},~\bfnm{Ruslan~R.}\binits{R.~R.}}
(\byear{2011}).
\btitle{ {L}earning with the weighted trace-norm under arbitrary sampling
  distributions.}
\bjournal{Advances in Neural Information Processing Systems}
\bvolume{}
\bpages{2133--2141}.
\end{barticle}
\endbibitem

  
  

\bibitem[\protect\citeauthoryear{Klopp, Lounici and
  Tsybakov}{2016}]{klopp2014robust}
\begin{barticle}[author]
\bauthor{\bsnm{Klopp},~\bfnm{Olga}\binits{O.}},
  \bauthor{\bsnm{Lounici},~\bfnm{Karim}\binits{K.}} \AND
  \bauthor{\bsnm{Tsybakov},~\bfnm{Alexandre~B}\binits{A.~B.}}
(\byear{2016}).
\btitle{Robust matrix completion}.
\bjournal{Probab. Theory Related Fields}.
\bpages{1--42}
\end{barticle}
\endbibitem

\bibitem[\protect\citeauthoryear{Koltchinskii, Lounici and
  Tsybakov}{2011}]{Koltchinskii2011}
\begin{barticle}[author]
\bauthor{\bsnm{Koltchinskii},~\bfnm{Vladimir}\binits{V.}},
  \bauthor{\bsnm{Lounici},~\bfnm{Karim}\binits{K.}} \AND
  \bauthor{\bsnm{Tsybakov},~\bfnm{Alexandre~B.}\binits{A.~B.}}
(\byear{2011}).
\btitle{Nuclear-norm penalization and optimal rates for noisy low-rank matrix
  completion}.
\bjournal{Ann. Statist.}
\bvolume{39}
\bpages{2302--2329}.
\bdoi{10.1214/11-AOS894}
\bmrnumber{2906869}
\end{barticle}
\endbibitem

\bibitem[\protect\citeauthoryear{Lafond}{2015}]{lafond2015low}
\begin{barticle}[author]
\bauthor{\bsnm{Lafond},~\bfnm{Jean}\binits{J.}}
(\byear{2015}).
\btitle{Low rank matrix completion with exponential family noise}.
\bjournal{J. Mach. Learn. Res.: Workshop and Conference Proceedings. COLT 2015 Proceedings}
\bvolume{40}
\bpages{1--20}.
\end{barticle}
\endbibitem


\bibitem[\protect\citeauthoryear{Lange}{2013}]{lange2013optimization}
\begin{bbook}[author]
\bauthor{\bsnm{Lange},~\bfnm{Kenneth}\binits{K.}}
(\byear{2013}).
\btitle{Optimization}.
\bseries{Springer Texts Statist.}
\bpublisher{Springer, New York}.
\MR{3052733}
\end{bbook}
\endbibitem


\bibitem[\protect\citeauthoryear{Ledoux and
  Talagrand}{1991}]{ledoux1991probability}
\begin{bbook}[author]
\bauthor{\bsnm{Ledoux},~\bfnm{Michel}\binits{M.}} \AND
  \bauthor{\bsnm{Talagrand},~\bfnm{Michel}\binits{M.}}
(\byear{1991}).
\btitle{Probability in Banach Spaces: Isoperimetry and Processes.}
\bseries{Ergeb. Math. Grenzgeb.}
\bvolume{23}.
\bpublisher{Springer-Verlag, Berlin}.
\MR{1102015}
\end{bbook}
\endbibitem





\bibitem[\protect\citeauthoryear{Li}{2011}]{li2013compressed}
\begin{barticle}[author]
\bauthor{\bsnm{Li},~\bfnm{Xiaodong}\binits{X.}}
(\byear{2013}).
\btitle{Compressed sensing and matrix completion with constant proportion of
  corruptions.}
\bjournal{Constr. Approx.}
\bvolume{37}
\bpages{73--99}.
\MR{3010211}
\end{barticle}
\endbibitem


\bibitem[\protect\citeauthoryear{MovieLens}{1998}]{movielens}
\begin{barticle}[author]
\bauthor{\bsnm{MovieLens 100k Dataset}} 
(\byear{1998}).
\btitle{Available at \url{http://grouplens.org/datasets/movielens/100k/}}.
\end{barticle}
\endbibitem

\bibitem[\protect\citeauthoryear{MovieLens}{2003}]{movielens1m}
\begin{barticle}[author]
\bauthor{\bsnm{MovieLens 1m Dataset}} 
(\byear{2003}).
\btitle{Available at \url{http://grouplens.org/datasets/movielens/1m/}}.
\end{barticle}
\endbibitem




\bibitem[\protect\citeauthoryear{Negahban and
  Wainwright}{2011}]{negahban2011estimation}
\begin{barticle}[author]
\bauthor{\bsnm{Negahban},~\bfnm{Sahand}\binits{S.}} \AND
  \bauthor{\bsnm{Wainwright},~\bfnm{Martin~J.}\binits{M.~J.}}
(\byear{2011}).
\btitle{Estimation of (near) low-rank matrices with noise and high-dimensional
  scaling}.
\bjournal{Ann. Statist.}
\bvolume{39}
\bpages{1069--1097}.
\MR{2816348}
\end{barticle}
\endbibitem

\bibitem[\protect\citeauthoryear{Negahban and
  Wainwright}{2012}]{negahban2012restricted}
\begin{barticle}[author]
\bauthor{\bsnm{Negahban},~\bfnm{Sahand}\binits{S.}} \AND
  \bauthor{\bsnm{Wainwright},~\bfnm{Martin~J.}\binits{M.~J.}}
(\byear{2012}).
\btitle{Restricted strong convexity and weighted matrix completion: Optimal
  bounds with noise}.
\bjournal{J. Mach. Learn. Res.}
\bvolume{13}
\bpages{1665--1697}.
\MR{2930649}
\end{barticle}
\endbibitem


\bibitem[\protect\citeauthoryear{Parikh and Boyd}{2014}]{parikh2014proximal}
\begin{barticle}[author]
\bauthor{\bsnm{Parikh},~\bfnm{Neal}\binits{N.}} \AND
  \bauthor{\bsnm{Boyd},~\bfnm{Stephen}\binits{S.}}
(\byear{2014}).
\btitle{Proximal Algorithms}.
\bjournal{Foundations and Trends in optimization}
\bvolume{1}
\bpages{123--231}.
\end{barticle}
\endbibitem



\bibitem[\protect\citeauthoryear{Rohde and
  Tsybakov}{2011}]{rohde2011estimation}
\begin{barticle}[author]
\bauthor{\bsnm{Rohde},~\bfnm{Angelika}\binits{A.}} \AND
  \bauthor{\bsnm{Tsybakov},~\bfnm{Alexandre~B.}\binits{A.~B.}}
(\byear{2011}).
\btitle{Estimation of high-dimensional low-rank matrices}.
\bjournal{Ann. Statist.}
\bvolume{39}
\bpages{887--930}.
\MR{2816342}
\end{barticle}
\endbibitem

\bibitem[\protect\citeauthoryear{Srebro, Rennie and
  Jaakkola}{2005}]{srebro2004maximum}
\begin{binproceedings}[author]
\bauthor{\bsnm{Srebro},~\bfnm{Nathan}\binits{N.}},
  \bauthor{\bsnm{Rennie},~\bfnm{Jason}\binits{J.}} \AND
  \bauthor{\bsnm{Jaakkola},~\bfnm{Tommi~S}\binits{T.~S.}}
(\byear{2005}).
\btitle{Maximum-margin matrix factorization}.
\bbooktitle{Proceedings of the NIPS Conference}
\bpages{1329--1336}.
\bpublisher{Vancouver, Canada.}
\end{binproceedings}
\endbibitem

\bibitem[\protect\citeauthoryear{Srebro and Shraibman}{2005}]{srebro2005rank}
\begin{bincollection}[author]
\bauthor{\bsnm{Srebro},~\bfnm{Nathan}\binits{N.}} \AND
  \bauthor{\bsnm{Shraibman},~\bfnm{Adi}\binits{A.}}
(\byear{2005}).
\btitle{Rank, trace-norm and max-norm}.
In \bbooktitle{Learning Theory}
\bpages{545--560}.
\bpublisher{Springer}.
\MR{2203286}
\end{bincollection}
\endbibitem

\bibitem[\protect\citeauthoryear{Tibshirani}{1996}]{tibshirani1996regression}
\begin{barticle}[author]
\bauthor{\bsnm{Tibshirani},~\bfnm{Robert}\binits{R.}}
(\byear{1996}).
\btitle{Regression shrinkage and selection via the lasso}.
\bjournal{Journal of the Royal Statistical Society. Series B (Methodological)}
\bpages{267--288}.
\MR{1379242}
\end{barticle}
\endbibitem


\bibitem[\protect\citeauthoryear{Unlocbox}{2016}]{unlocbox}
\begin{bmisc}[author]
\bauthor{\bsnm{Unlocbox}}
(\byear{2016}).
\btitle{{Unlocbox}: (Matlab convex optimization toolbox version 1.7.3)}.
\bhowpublished{\url{https://lts2.epfl.ch/unlocbox/}}.
\end{bmisc}
\endbibitem


\bibitem[\protect\citeauthoryear{van~de Geer}{2000}]{van2000applications}
\begin{bbook}[author]
\bauthor{\bparticle{van~de} \bsnm{Geer},~\bfnm{Sara~A.}\binits{S.~A.}}
(\byear{2000}).
\btitle{Applications of Empirical Process Theory}
\bvolume{91}.
\bpublisher{Cambridge University Press, Cambridge}.
\MR{1739079}
\end{bbook}
\endbibitem


\bibitem[\protect\citeauthoryear{van~de Geer}{2001}]{van2001least}
\begin{barticle}[author]
\bauthor{\bparticle{van~de} \bsnm{Geer},~\bfnm{Sara}\binits{S.}}
(\byear{2001}).
\btitle{Least squares estimation with complexity penalties}.
\bjournal{Math. Methods Statist.}
\bvolume{10}
\bpages{355--374}.
\MR{1867165}
\end{barticle}
\endbibitem

\bibitem[\protect\citeauthoryear{van~de Geer}{2016}]{geer2015}
\begin{bbook}[author]
\bauthor{\bparticle{van~de} \bsnm{Geer},~\bfnm{Sara}\binits{S.}}
(\byear{2016}).
\btitle{Estimation and Testing under Sparsity: {\'E}cole {d'}{\'E}t{\'e} de
  Probabilit{\'e}s de Saint-Flour XLV-2015}.
\bseries{Lecture Notes in Mathematics}.
\bpublisher{Springer}.
\MR{3526202}
\end{bbook}
\endbibitem


\bibitem[\protect\citeauthoryear{van~der Vaart and Wellner}{1996}]{van1996weak}
\begin{bbook}[author]
\bauthor{\bparticle{van~der} \bsnm{Vaart},~\bfnm{Aad~W.}\binits{A.~W.}} \AND
  \bauthor{\bsnm{Wellner},~\bfnm{Jon~A.}\binits{J.~A.}}
(\byear{1996}).
\btitle{Weak Convergence and Empirical Processes}.
\bpublisher{Springer Series in Statistics, Springer-Verlag, New York}.
\MR{1385671}
\end{bbook}
\endbibitem


\bibitem[\protect\citeauthoryear{Watson}{1992}]{watson1992characterization}
\begin{barticle}[author]
\bauthor{\bsnm{Watson},~\bfnm{Alistair}\binits{G. A.}}
(\byear{1992}).
\btitle{Characterization of the Subdifferential of some matrix norms}.
\bjournal{Linear Algebra and its Applications}
\bvolume{170}
\bpages{33--45}.
\MR{1160950}
\end{barticle}
\endbibitem



\end{thebibliography}

%
%
%
%
%
%

\end{document}